\title{Epsilon-strongly Graded Rings, Separability and Semisimplicity}
\newtheorem{thm}{Theorem}
\newtheorem{prop}[thm]{Proposition}
\newtheorem{lem}[thm]{Lemma}
\newtheorem{cor}[thm]{Corollary}
\theoremstyle{definition}
\newtheorem{defi}[thm]{Definition}
\newtheorem{exa}[thm]{Example}
\newtheorem{rem}[thm]{Remark}
\newcommand{\m}{{}^{-1}}
\newcommand{\ep}{\epsilon}
\DeclareMathOperator{\image}{Im}
\DeclareMathOperator{\Supp}{Supp}
\begin{document}

\author{Patrik Nystedt}
\address{Department of Engineering Science,
University West, 
SE-46186 Trollh\"{a}ttan, Sweden}

\author{Johan \"{O}inert}
\address{Department of Mathematics and Natural Sciences,
Blekinge Institute of Technology,
SE-37179 Karlskrona, Sweden}

\author{H\'{e}ctor Pinedo}
\address{Escuela de Matem\'{a}ticas, Universidad Industrial de Santander,
Carrera 27 Calle 9,
Edificio Camilo Torres
Apartado de correos 678,
Bucaramanga, Colombia}

\email{patrik.nystedt@hv.se; johan.oinert@bth.se; hpinedot@uis.edu.co}

\subjclass[2010]{16W50, 16S35, 16H05, 16K99, 16E60}

\keywords{group graded ring, partial crossed product, separable, semisimple, Frobenius}

\begin{abstract}
We introduce the class of epsilon-strongly graded rings
and show that it properly contains both the class of strongly graded
rings and the class of unital partial crossed products.
We determine precisely when an epsilon-strongly graded ring is separable over its principal component.
Thereby, we simultaneously generalize a result for 
strongly group graded rings by N{\v a}st{\v a}sescu, Van den Bergh and Van Oystaeyen,
and a result for unital partial crossed products by
Bagio, Lazzarin and Paques.
We also show that the class of unital partial crossed products
appear in the class of epsilon-strongly graded rings
in a fashion similar to how the classical crossed products
present themselves in the class of strongly graded rings.
Thereby, we obtain, in the special case of unital 
partial crossed products, a short proof of
a general result by Dokuchaev, Exel and Sim\'{o}n 
concerning when graded rings can be presented as partial crossed products.
We also provide some interesting classes of examples of separable 
epsilon-strongly graded rings, with finite as well as infinite grading groups.
In particular, we obtain an answer to a question raised by
Le Bruyn, Van den Bergh and Van Oystaeyen in 1988.
\end{abstract}

\maketitle

\section{Introduction}

Let $S$ be an associative ring equipped with a 
non-zero multiplicative identity element $1$.
Let $S/R$ be a ring extension.
By this we mean that $R$ is a subring of 
$S$ containing $1$.
Recall that $S/R$ is called {\it separable} if
the multiplication map $m : S \otimes_R S \rightarrow S$
is a splitting epimorphism of $R$-bimodules.
Equivalently, this can be formulated by saying that
there is $x \in S \otimes_R S$ satisfying $m(x)=1$
and that, for every $s \in S$, the relation $sx = xs$ holds.
In that case, $x$ is called a {\it separability element} of $S \otimes_R S$.
Separable ring extensions are a natural
generalization of the classical separability condition for
algebras over fields which in turn is a generalization of
separable field extensions (see e.g. \cite{dem}).
N{\v a}st{\v a}sescu, Van den Bergh and Van Oystaeyen \cite{nas89} 
have generalized this even further by introducing the notion of a {\it separable functor}.
They show that a ring extension is separable precisely when the
associated restriction functor is separable.
A lot of work has been devoted to the question of when ring
extensions are separable (see e.g. 
\cite{bag},
\cite{Bruyn1988}, 
\cite{cae},
\cite{cas}, 
\cite{dem}, 
\cite{hj}, 
\cite{hp}, 
\cite{L05},
\cite{L06},
\cite{miyashita},
\cite{nas89},
\cite{raf} and
\cite{theo}).
One reason for this intense interest is that some properties of the 
ground ring $R$ automatically are inherited by $S$,
such as semisimplicity and hereditarity (see e.g. \cite{nas89}).

In the context of group graded rings, necessary and sufficient
criteria for separability has been obtained in two different cases
(see Theorem~\ref{separabilitystrong} 
and Theorem~\ref{separabilitypartial} below).
Indeed, let $G$ be a group with identity element $e$.
Let $S$ be {\it graded} by $G$. Recall that this means that, for all $g,h \in G$, there 
is an additive subgroup $S_g$ of $S$
such that $S = \oplus_{g \in G} S_g$ and
$S_g S_h \subseteq S_{gh}$. 
The subring $R = S_e$ is called the {\it principal component} of $S$.

In the first case, $S$ is {\it strongly graded}. 
Recall that this means that 
$S_g S_h = S_{gh}$, for all $g,h \in G$.
This makes each $S_g$, for $g \in G$, an
invertible $R$-bimodule which implies that 
there is a unique ring automorphism $\beta_g : Z(R) \rightarrow Z(R)$
such that $\beta_g(r) s = s r$, for $r \in Z(R)$ and $s \in S_g$
(see \cite{miyashita} or e.g. Definition~\ref{definitiongamma} and Proposition~\ref{restriction}).  
If $G$ is finite, 
then the trace function ${\rm tr}_{\beta} : Z(R) \rightarrow Z(R)$
is defined by ${\rm tr}_{\beta}(r) = \sum_{g \in G} \beta_g(r)$, 
for $r \in Z(R)$. 

\begin{thm}[N{\v a}st{\v a}sescu, Van den Bergh and Van Oystaeyen \cite{nas89}]\label{separabilitystrong}
If $S$ is strongly graded by $G$,
then $S/R$ is separable if and only if $G$ is finite
and $1 \in {\rm tr}_{\beta}(Z(R))$.
\end{thm}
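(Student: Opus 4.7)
The plan is to prove the two implications separately, with the forward direction being a direct construction and the reverse direction requiring a structural analysis of the separability element.

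For the ``if'' direction, assume $G$ is finite and fix $z \in Z(R)$ with $\sum_{g \in G}\beta_g(z) = 1$. Since $S$ is strongly graded, $S_g S_{g^{-1}} = R$ for each $g$, so we may choose finitely many $a_i^{(g)} \in S_g$ and $b_i^{(g)} \in S_{g^{-1}}$ with $\sum_i a_i^{(g)} b_i^{(g)} = 1$. I propose the separability element
\[
x \;=\; \sum_{g \in G} \sum_i a_i^{(g)} z \otimes b_i^{(g)} \;\in\; S \otimes_R S.
\]
The identity $m(x) = \sum_g \beta_g(z) = 1$ is immediate from $a z = \beta_g(z) a$ for $a \in S_g$. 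To verify $sx = xs$ for homogeneous $s \in S_h$, the key tool is the bimodule identity
\[
\sum_i s\, a_i^{(g)} \otimes b_i^{(g)} \;=\; \sum_j a_j^{(hg)} \otimes b_j^{(hg)}\, s,
\]
obtained by inserting $1 = \sum_j a_j^{(hg)} b_j^{(hg)}$ on the left and sliding the element $b_j^{(hg)} s a_i^{(g)} \in S_e = R$ across the tensor. Combined with $a r = \beta_g(r) a$ and $s r = \beta_h(r) s$ for $r \in Z(R)$ and the reindexing $g \mapsto hg$ in the outer sum, this matches $sx$ and $xs$ term by term.

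For the ``only if'' direction, suppose $x$ is a separability element. Using the induced $G \times G$-grading on $S \otimes_R S = \bigoplus_{g,h} S_g \otimes_R S_h$, decompose $x = \sum_{(g,h) \in F} x_{g,h}$ with $F$ finite. From $m(x) = 1 \in S_e$, only pairs with $gh = e$ can contribute to the $S_e$-part, while all other partial sums vanish. Equating $G \times G$-components of $sx = xs$ for $s \in S_u$ yields the relations $s \cdot x_{u^{-1}p,\, q} = x_{p,\, q u^{-1}} \cdot s$ for every $(p,q)$. To force $G$ finite, a propagation argument exploiting $S_u \neq 0$ (from $S_u S_{u^{-1}} = R$) shows that the finite support of $x$ is compatible with these relations for all $u \in G$ only if $|G| < \infty$. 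For the trace condition, define $z := m(x_{e,e}) \in R$; centrality $z \in Z(R)$ follows from $rx = xr$ for $r \in R$, and the relation $sx = xs$ for $s \in S_g$ identifies $m(x_{g, g^{-1}}) = \beta_g(z)$, so summing gives $\mathrm{tr}_\beta(z) = \sum_g m(x_{g, g^{-1}}) = 1$.

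The main obstacle is the reverse direction, which simultaneously requires proving $G$ is finite and extracting a single $z \in Z(R)$ whose $\beta$-orbit sums to $1$. The finiteness rests on a rigidity argument converting the finite support of $x$ into a bound on $|G|$, while the identification $m(x_{g, g^{-1}}) = \beta_g(z)$ requires carefully transporting the central element across the diagonal cells of the $G \times G$-grading using the separability relations. The forward direction, by contrast, is essentially bookkeeping with dual bases once the correct form of $x$ is guessed.
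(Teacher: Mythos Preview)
The paper does not prove Theorem~\ref{separabilitystrong} directly; it is quoted from \cite{nas89}. The paper's contribution is the more general Theorem~\ref{maintheorem} for epsilon-strongly graded rings, whose proof, specialised to the strongly graded case ($\epsilon_g=1$ for all $g$), yields exactly your argument: the same candidate separability element $x=\sum_{g}\sum_i u_g^{(i)}c\otimes v_{g^{-1}}^{(i)}$ in the forward direction, and the same analysis via the $(G\times G)$-grading of $S\otimes_R S$ in the reverse direction. One cosmetic difference is that in the ``only if'' part the paper first passes to the degree-$e$ component $y$ of $x$ under the $G$-grading and shows that $y$ is again a separability element, before decomposing along the $(G\times G)$-grading; you skip this reduction and read off the diagonal components $x_{g,g^{-1}}$ directly, which is equivalent.

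Your finiteness step is correct but could be stated more sharply than ``a propagation argument''. Concretely, from $s\,x_{e,e}=x_{u,u^{-1}}\,s$ for $s\in S_u$ and the strongly graded identity $S_{u^{-1}}S_u=R$, one gets that $x_{e,e}\neq 0$ forces $x_{u,u^{-1}}\neq 0$ for every $u\in G$; since $x$ has finite support this is impossible for infinite $G$, so $x_{e,e}=0$ and, by the same reasoning, every $x_{g,g^{-1}}=0$, contradicting $1=\sum_g m(x_{g,g^{-1}})$. Equivalently (and closer to how Theorem~\ref{maintheorem} handles it), once you have $m(x_{g,g^{-1}})=\beta_g(z)$ with $z=m(x_{e,e})$, the vanishing of $x_{g,g^{-1}}$ for some $g$ gives $\beta_g(z)=0$ and hence $z=0$ since $\beta_g$ is an automorphism, again contradicting $\sum_g\beta_g(z)=1$. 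Either way your outline goes through.
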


In the second case, $S$ is a {\it unital partial crossed product} of $G$ over $R$.
Recall that a {\it unital twisted partial action} of $G$ on $R$ is a triple 
\begin{displaymath}
\alpha = 
( \{ D_g \}_{g \in G} , \{ \alpha_g \}_{g \in G} , \{ w_{g,h} \}_{(g,h) \in G \times G} )
\end{displaymath}
where for each $g \in G$, $D_g$ is a unital ideal of $R$
having an (not necessarily non-zero) 
identity element $1_g$ which is central in $R$,
$\alpha_g : D_{g^{-1}} \rightarrow D_g$ is an isomorphism
of rings, and for each $(g,h) \in G \times G$,
$w_{g,h}$ is an invertible element from $D_g D_{gh}$,
satisfying the following assertions for all $g,h,l \in G$:
\begin{itemize}

\item[(P1)] $\alpha_e = {\rm id}_R$;

\item[(P2)] $\alpha_g(D_{g^{-1}} D_h) = D_g D_{gh}$;

\item[(P3)] if $r \in D_{h^{-1}} D_{(gh)^{-1}}$, 
then $\alpha_g ( \alpha_h (r) ) =
w_{g,h} \alpha_{gh}(r) w_{g,h}^{-1}$;

\item[(P4)] $w_{e,g} = w_{g,e} = 1_g$;

\item[(P5)] if $r \in D_{g^{-1}} D_h D_{hl}$, then
$\alpha_g(r w_{h,l}) w_{g,hl} =
\alpha_g(r) w_{g,h} w_{gh,l}$.

\end{itemize}
Given a unital twisted partial action of $G$ on $R$,
the unital partial crossed product $R \star_{\alpha}^w G$
is the direct sum $\oplus_{g \in G} D_g \delta_g$,
in which the $\delta_g$'s are formal symbols, and the 
multiplication is defined by the biadditive 
extension of the relations
\begin{itemize}

\item[(P6)] $(r \delta_g) (r' \delta_h) = r \alpha_g(r' 1_{g^{-1}}) w_{g,h} \delta_{gh}$,

\end{itemize}
for $g,h \in G$, $r \in D_g$ and $r' \in D_h$.  
If $G$ is finite, then the trace function
${\rm tr}_{\alpha} : Z(R) \rightarrow Z(R)$
is defined by 
${\rm tr}_{\alpha}(r) = \sum_{g \in G} \alpha_g(r 1_{g^{-1}})$, for $r \in Z(R)$.

\begin{thm}[Bagio, Lazzarin and Paques \cite{bag}]\label{separabilitypartial}
If $S$ is a unital partial crossed product 
of a finite group $G$ over $R$, then $S/R$ is 
separable if and only if $1 \in {\rm tr}_{\alpha}(Z(R))$. 
\end{thm}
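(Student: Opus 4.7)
The proof follows the strategy of Theorem~\ref{separabilitystrong} while carrying along the idempotents $1_g$ and the cocycles $w_{g,h}$.

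For the sufficiency direction, assume that $r \in Z(R)$ satisfies $\sum_{g \in G} \alpha_g(r 1_{g^{-1}}) = 1$. I would propose the separability element
\[
x \;=\; \sum_{g \in G} \alpha_g(r 1_{g^{-1}}) \delta_g \otimes_R w_{g^{-1},g}^{-1} \delta_{g^{-1}} \;\in\; S \otimes_R S.
\]
A direct computation via (P6) reduces the $g$-summand of $m(x)$ to $\alpha_g(r 1_{g^{-1}}) \, \alpha_g(w_{g^{-1},g}^{-1}) \, w_{g,g^{-1}} \, \delta_e$; the auxiliary identity $\alpha_g(w_{g^{-1},g}^{-1}) = w_{g,g^{-1}}^{-1}$, obtained by applying (P5) to the triple $(g, g^{-1}, g)$ together with (P4), then collapses each such summand to $\alpha_g(r 1_{g^{-1}}) \delta_e$, so that $m(x) = 1$ by hypothesis. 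The centrality relation $sx = xs$ I would verify on homogeneous $s = d \delta_h$ by expanding both sides via (P6), reindexing one of the summations by $g \mapsto h^{-1} g$, and invoking (P3), (P5), the centrality of $r$, and the centrality of each $1_g$ in $R$.

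For the necessity direction, assume a separability element $x$ exists. The tensor product $S \otimes_R S$ inherits a natural $G$-grading with homogeneous components $(S \otimes_R S)_g = \sum_{h \in G} S_h \otimes_R S_{h^{-1}g}$, and the multiplication map $m$ is grading-preserving. Splitting the equations $m(x) = 1$ and $sx = xs$ into graded pieces allows me to replace $x$ by its degree-$e$ component, so that $x = \sum_{h \in G} y_h$ with $y_h \in S_h \otimes_R S_{h^{-1}}$ and $\sum_h m(y_h) = 1$. Imposing centrality first against elements $r' \delta_e \in R$ and then against the partial units $1_h \delta_h$ forces all of the $y_h$ to be parametrized by a single central element $r_0 \in Z(R)$, so that $m(y_h) = \alpha_h(r_0 1_{h^{-1}})$; substituting, the condition $m(x) = 1$ becomes $\sum_{h \in G} \alpha_h(r_0 1_{h^{-1}}) = 1$, as required.

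The principal obstacle lies in the necessity half, namely, showing that a generic separability element can be normalized so as to be parametrized by a single central $r_0$. In the strongly graded setting each $S_h \otimes_R S_{h^{-1}}$ is a rank-one free $R$-bimodule, which makes the extraction of $r_0$ immediate; here, however, one must work inside the strictly smaller subbimodule $D_h \delta_h \otimes_R D_{h^{-1}} \delta_{h^{-1}}$ and carefully thread the local identities $1_h$ through every computation. Repeated use of (P5) together with the centrality of each $1_h$ in $R$ is what supplies the required normalization; once this is in place, the remainder of the proof is a direct verification.
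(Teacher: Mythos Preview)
Your outline is essentially correct and would yield a valid proof, but it proceeds along a different route than the paper. In the paper, Theorem~\ref{separabilitypartial} is quoted from the literature without a direct proof; the paper's own argument for it is indirect: one first shows that every unital partial crossed product is epsilon-strongly graded (Theorem~\ref{correspondence}), and then invokes the general separability criterion for epsilon-strongly graded rings (Theorem~\ref{maintheorem}), obtaining in fact the stronger Theorem~\ref{genseparabilitypartial} valid for infinite $G$. Thus the paper never manipulates the cocycles $w_{g,h}$ or the partial units $1_h\delta_h$ directly in a separability computation; all of that is absorbed into the abstract maps $\gamma_g$ and the trace ${\rm tr}_\gamma$.

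Your approach, by contrast, works entirely inside the partial crossed product: you write down an explicit candidate separability element built from $\alpha_g$, $w_{g^{-1},g}^{-1}$ and $\delta_g$, and verify $m(x)=1$ and $sx=xs$ by hand using (P3)--(P6). For the necessity half you use the same $G$-grading reduction on $S\otimes_R S$ that the paper uses in the proof of Theorem~\ref{maintheorem}, but then extract $r_0$ by testing against $R$ and against $1_h\delta_h$ rather than via the abstract $\gamma_h$. This is exactly in the spirit of the original Bagio--Lazzarin--Paques argument. What your approach buys is concreteness and self-containment (no need to set up the epsilon-strong framework); what the paper's approach buys is uniformity (one proof covers strongly graded rings, partial crossed products, and more) and an automatic extension to infinite $G$ via $Z(R)_{\rm fin}$. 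Your sketch of the necessity step is accurate but terse; the cleanest way to pin down $r_0$ is to set $r_0=m(y_e)$ and then use $(1_h\delta_h)y_e = y_h(1_h\delta_h)$, apply $m$, and read off $m(y_h)=\alpha_h(r_0 1_{h^{-1}})$, exactly parallel to the $sc_e=c_hs$ step in the paper's proof of Theorem~\ref{maintheorem}.
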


In this article, we wish to unify Theorem~\ref{separabilitystrong}
and Theorem~\ref{separabilitypartial} (see Theorem~\ref{maintheorem})
for a class of rings which properly contains both the 
class of strongly graded rings and the class of partial crossed products.
We call this class of rings {\it epsilon-strongly graded}.
The term ``epsilon-strongly'' is supposed to be suggestive 
of the fact that the grading is ``an epsilon away'' from being strong.
Let us briefly describe the idea behind this class of rings.
Suppose that $S$ is a ring graded by a group $G$  and take $g,h \in G$.
Instead of postulating that $S_g S_h = S_{gh}$, 
as in the strongly graded case,
we relax this condition by saying that $S_g S_{g^{-1}}$
and $S_{h^{-1}} S_h$ are unital ideals of $R$ such that the equalities 
$S_g S_h = S_g S_{g^{-1}} S_{gh} = S_{gh} S_{h^{-1}} S_h$ hold.
The multiplicative identity element in $S_g S_{g^{-1}}$ is denoted by $\epsilon_g$.
Here is an outline of the article.

In Section~\ref{sectioncharacterization},
we introduce epsilon-strongly graded rings (see Definition~\ref{definitionepsilon}) and 
we give several equivalent characterizations of them (see Proposition~\ref{epsilon1}).

In Section~\ref{sectionseparability}, we show that if $S$ is epsilon-strongly
graded by $G$, then we can define a 
trace function ${\rm tr}_{\gamma} : Z(R)_{\rm fin} \rightarrow Z(R)$
(see Definition~\ref{definitiontrace})
which generalizes the trace functions from both the strongly graded case and
the partial crossed product situation.
Here $Z(R)_{\rm fin}$ is the set of $r \in Z(R)$
with the property that for all but finitely many $g \in G$,
the relation $r \epsilon_g = 0$ holds.
At the end of Section~\ref{sectionseparability},
we show the following simultaneous generalization
of Theorem~\ref{separabilitystrong} and Theorem~\ref{separabilitypartial}.
Notice that our result holds for any, possibly infinite, group $G$.

\begin{thm}\label{maintheorem}
If $S$ is epsilon-strongly graded by $G$,
then $S/R$ is separable if and only if $1 \in {\rm tr}_{\gamma}(Z(R)_{\rm fin})$. 
\end{thm}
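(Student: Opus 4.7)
The plan is to adapt the classical argument for strongly graded rings (Theorem~\ref{separabilitystrong}) to the present setting, where the sharp equalities $S_g S_{g^{-1}} = R$ are replaced by $S_g S_{g^{-1}} = R\epsilon_g$. Recall that $S/R$ is separable precisely when there exists $x \in S \otimes_R S$ with $m(x) = 1$ and $sx = xs$ for every $s \in S$. Since $S \otimes_R S$ inherits a natural $G$-grading with $(S \otimes_R S)_g = \sum_{h \in G} S_h \otimes_R S_{h^{-1}g}$, and both $m$ and the commutation relation respect this grading, one may in both directions reduce to the degree-$e$ homogeneous component of $x$.

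For the ``if'' direction, I would assume $c \in Z(R)_{\rm fin}$ satisfies $\mathrm{tr}_{\gamma}(c) = 1$ and let $F := \{ g \in G : c\epsilon_g \neq 0 \}$, a finite set. For each $g \in F$, Proposition~\ref{epsilon1} ensures that the multiplication $S_g \otimes_R S_{g^{-1}} \to R\epsilon_g$ is surjective, so one can choose finite families $\{u_{g,i}\} \subseteq S_g$ and $\{v_{g,i}\} \subseteq S_{g^{-1}}$ with $\sum_i u_{g,i} v_{g,i} = \epsilon_g$. The candidate separability element is
\begin{equation*}
x := \sum_{g \in F} \sum_i u_{g,i} c \otimes v_{g,i}.
\end{equation*}
Applying $m$ and sliding $c \in Z(R)$ through $u_{g,i} \in S_g$ by means of $\gamma_g$ collapses the sum to $\sum_{g \in G} \gamma_g(c \epsilon_{g^{-1}}) = \mathrm{tr}_{\gamma}(c) = 1$. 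The relation $sx = xs$ reduces, by $R$-bilinearity and the grading, to the case $s = s_h \in S_h$; centrality of $c$ and the dual-basis property, combined with the reindexing $g \mapsto h^{-1}g$ and the identity $S_g S_h = S_g S_{g^{-1}} S_{gh}$ furnished by the epsilon-strong condition, yield the desired equality.

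For the ``only if'' direction, I would take a separability element $x$, project to its degree-$e$ homogeneous component, and write $x_e = \sum_{g} \sum_i a_{g,i} \otimes b_{g,i}$ with $a_{g,i} \in S_g$, $b_{g,i} \in S_{g^{-1}}$, the outer sum supported on a finite subset $T \subseteq G$. Setting $c := \sum_i a_{e,i} b_{e,i}$, commutation $rx_e = x_e r$ for $r \in R$ forces $c \in Z(R)$, and the finiteness of $T$ together with $a_{g,i} b_{g,i} \in R\epsilon_g$ forces $c\epsilon_g = 0$ for $g$ outside a finite set, so $c \in Z(R)_{\rm fin}$. Evaluating $s_h x_e = x_e s_h$ for suitably chosen $s_h \in S_h$, extracting the appropriate homogeneous component via multiplication by the idempotents $\epsilon_h$, and summing the resulting identities yields $1 = m(x_e) = \mathrm{tr}_{\gamma}(c)$.

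The principal obstacle is the careful bookkeeping of the local units $\epsilon_g$. In the strongly graded case each $S_g$ is a globally invertible $R$-bimodule, so most idempotent manipulations trivialize; here $S_g$ is only invertible as a bimodule over $R\epsilon_g \subsetneq R$, and each computation must track which $\epsilon$'s vanish and which survive. The most delicate step is the commutation $sx = xs$ for $s \in S_h$ in the ``if'' direction, since the naive reindexing $g \mapsto h^{-1}g$ produces the correct sum only after multiplying by $\epsilon_h$ and invoking $S_g S_h = S_{gh} S_{h^{-1}} S_h$. The restriction $c \in Z(R)_{\rm fin}$, rather than all of $Z(R)$, is exactly what guarantees that $x$ is a finite sum when $G$ is infinite, and it is on this restriction that the unifying character of the theorem hinges.
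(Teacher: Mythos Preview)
Your outline follows the paper's argument closely: both directions use the $G$-grading on $S\otimes_R S$ and its $(G\times G)$-refinement, with the candidate $x=\sum_g\sum_i u_g^{(i)}c\otimes v_{g^{-1}}^{(i)}$ in the ``if'' direction and $c=m(d_e)$ (image under $m$ of the $(e,e)$-bihomogeneous piece of the separability element) in the ``only if'' direction. There is, however, a genuine gap in your ``only if'' argument. You claim that ``the finiteness of $T$ together with $a_{g,i}b_{g,i}\in R\epsilon_g$ forces $c\epsilon_g=0$ for $g$ outside a finite set''. This does not follow: $c$ is built solely from the $(e,e)$-piece, and the vanishing of $m(d_g)\in R\epsilon_g$ for $g\notin T$ says nothing \emph{a priori} about $c\epsilon_g$. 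The paper's route is to first establish $\gamma_h(c)=m(d_h)$ for every $h$: restricting $s\,x_e=x_e\,s$ (for $s\in S_h$) to bidegree $(h,e)$ in the $(G\times G)$-grading gives $s\,d_e=d_h\,s$, hence $s\,c=m(d_h)\,s$, and summing over $s=u_h^{(i)}$ against $v_{h^{-1}}^{(i)}$ yields $\gamma_h(c)=m(d_h)\epsilon_h=m(d_h)$. Only then does finiteness of $T$ give $\gamma_h(c)=0$ for almost all $h$, whence $c\,\epsilon_{h^{-1}}=\gamma_{h^{-1}}(\gamma_h(c))=0$ by Proposition~\ref{composition}, so $c\in Z(R)_{\rm fin}$. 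The same identity $\gamma_h(c)=m(d_h)$ is what makes ${\rm tr}_\gamma(c)=\sum_h m(d_h)=m(x_e)=1$; your description of this last step (``multiplication by the idempotents $\epsilon_h$'') does not supply it.

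A smaller point in the ``if'' direction: your support set $F=\{g:c\epsilon_g\neq 0\}$ is off by an inversion. Since $u_{g,i}\in S_g$ one has $u_{g,i}c=u_{g,i}\epsilon_{g^{-1}}c$, so the nonvanishing terms are those with $c\epsilon_{g^{-1}}\neq 0$; restricting to your $F$ may discard nonzero summands and then $m(x)$ need not equal ${\rm tr}_\gamma(c)$. The paper avoids this by writing $\sum_{g\in G}$ and noting that almost all terms vanish.
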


In Section~\ref{sectionsemisimplicity}, 
we use Theorem~\ref{maintheorem}
to find criteria for when epsilon-strongly graded rings
are semisimple, hereditary or Frobenius
(see Theorem~\ref{theoremhereditary} and
Theorem~\ref{theoremfrobenius}).

In Section~\ref{simplicity}, we show that a result concerning simplicity
for strongly graded rings from \cite[Theorem 6.6]{O09}
can be generalized to epsilon-strongly graded rings
(see Proposition~\ref{Prop:Simplicity}).

In Section~\ref{sectionpartialcrossedproducts}, 
we introduce epsilon-crossed products
(see Definition~\ref{epsiloncrossedproduct}).
We show that the class of epsilon-crossed products
coincides with the class of unital partial crossed products
(see Theorem~\ref{correspondence}).
This is an epsilon-analogue of how the classical
crossed products appear in the class 
of strongly graded rings (see e.g. \cite{nas82}).
Thereby, we obtain, in the special case of unital 
partial crossed products, a short proof of
a general result by Dokuchaev, Exel and Sim\'{o}n \cite{dokuchaev2008}
concerning when graded rings can be presented as partial crossed products.
At the end of the section, we use Theorem~\ref{maintheorem}
to reformulate Theorem~\ref{separabilitypartial} so that
it holds for any, possibly infinite, group $G$
(see Theorem~\ref{genseparabilitypartial}).

In Section~\ref{sectionexample}, 
we provide a class of examples of separable 
epsilon-strongly $\mathbb{Z}_2$-graded rings, neither of which are strongly graded, 
nor partial crossed products, in any natural way
(see Proposition~\ref{propexample} and Proposition~\ref{specialcase}).
Thereby, we provide the first known non-trivial example of a ring, graded by a finite group, which is separable over its principal component but yet not strongly graded (see Remark~\ref{rem:Bruynbook} and \cite[Remark II.5.1.6]{Bruyn1988}).

In Section~\ref{Sec:MoritaRing},
we consider Morita rings 
which are in a natural way $\mathbb{Z}$-graded.
We show that, under weak assumptions,
they are in fact epsilon-strongly graded and separable over their principal components
(see Proposition~\ref{Prop:MoritaRingSeparability}).

\section{Some Characterizations of Epsilon-strongly Graded Rings}\label{sectioncharacterization}

In this section, we introduce epsilon-strongly graded rings (see Definition~\ref{definitionepsilon}) and 
we give several equivalent characterizations of them (see Proposition~\ref{epsilon1}).
Throughout the rest of this article, unless otherwise stated, let $G$ be an arbitrary group with identity element $e$.
In this section, let $S$ be an arbitrary unital ring which is graded by $G$ and put $R = S_e$.

\begin{defi}\label{definitionepsilon}
Let $S$ be a ring which is graded by $G$.
We say that $S$ is {\it epsilon-strongly graded by} $G$
if for each $g \in G$, $S_g S_{g^{-1}}$ is a unital ideal of $R$
such that for all $g,h \in G$ the equalities 
$S_g S_h = S_g S_{g^{-1}} S_{gh} = S_{gh} S_{h^{-1}} S_h$ hold.
In that case, for each $g \in G$, we let $\epsilon_g$ denote
the multiplicative identity element of $S_g S_{g^{-1}}$.
\end{defi}

\begin{prop}
If $S$ is epsilon-strongly graded by $G$, then, for every $g \in G$, $\epsilon_g \in Z(R)$.
\end{prop}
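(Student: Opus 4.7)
The plan is to prove the general ring-theoretic fact that if $I$ is a two-sided ideal of a unital ring $R$ which happens to possess its own multiplicative identity element $1_I$, then $1_I$ lies in $Z(R)$. Applying this to the two-sided ideal $I = S_g S_{g^{-1}}$ of $R$ with identity $\epsilon_g$ immediately yields the statement.

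To carry this out, I would fix $r \in R$ arbitrarily and aim to show $r \epsilon_g = \epsilon_g r$ by sandwiching both quantities between copies of $\epsilon_g$. First, since $S_g S_{g^{-1}}$ is a right ideal of $R$, the element $r \epsilon_g$ lies in $S_g S_{g^{-1}}$, so the fact that $\epsilon_g$ is a left identity on this ideal gives $\epsilon_g r \epsilon_g = r \epsilon_g$. Symmetrically, since $S_g S_{g^{-1}}$ is a left ideal, $\epsilon_g r \in S_g S_{g^{-1}}$, and the fact that $\epsilon_g$ is a right identity on this ideal gives $\epsilon_g r \epsilon_g = \epsilon_g r$. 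Chaining these two equalities yields $r \epsilon_g = \epsilon_g r \epsilon_g = \epsilon_g r$, as desired.

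There is no real obstacle here; the argument uses only the definition of $\epsilon_g$ as the multiplicative identity of the unital two-sided ideal $S_g S_{g^{-1}}$, together with the two-sided ideal property. No use of the full epsilon-strongly graded hypothesis (e.g., the equalities $S_g S_h = S_g S_{g^{-1}} S_{gh} = S_{gh} S_{h^{-1}} S_h$) is needed for this statement; only the ``unital ideal'' part of Definition~\ref{definitionepsilon} enters.
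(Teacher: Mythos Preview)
Your argument is correct and is essentially identical to the paper's: both use that $r\epsilon_g$ and $\epsilon_g r$ lie in the two-sided ideal $S_g S_{g^{-1}}$, then apply the identity property of $\epsilon_g$ on each side to obtain $r\epsilon_g = \epsilon_g r \epsilon_g = \epsilon_g r$. One small slip: you have the labels ``left ideal'' and ``right ideal'' interchanged (e.g., $r\epsilon_g \in S_g S_{g^{-1}}$ uses that the ideal is a \emph{left} ideal, not a right one), but since the ideal is two-sided this does not affect the validity of the argument.
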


\begin{proof}
Take $g \in G$ and $r \in R$.
Since $S_g S_{g^{-1}}$ is an $R$-ideal it follows that
$\epsilon_g r , r \epsilon_g \in S_g S_{g^{-1}}$.
Using that $\epsilon_g$ is a multiplicative identity element of $S_g S_{g^{-1}}$,
we therefore get that $\epsilon_g r = ( \epsilon_g r ) \epsilon_g =
\epsilon_g (r \epsilon_g ) = r \epsilon_g$.
\end{proof}

\begin{defi}
Following \cite[Definition 4.5]{CEP2016} we say that $S$ is \emph{symmetrically graded} 
if for every $g \in G$, the equality $S_g S_{g^{-1}} S_g = S_g$ holds.
\end{defi}

\begin{prop}\label{epsilon1} 
The following assertions are equivalent:
\begin{enumerate}

\item[(i)] $S$ is epsilon-strongly graded by $G$;

\item[(ii)] $S$ is symmetrically graded by $G$, and for every $g \in G$ the $R$-ideal
$S_g S_{g^{-1}}$ is unital;

\item[(iii)] For every $g \in G$ there is an element $\epsilon_g \in S_g S_{g^{-1}}$
such that for all $s \in S_g$ the relations $\epsilon_g s = s = s \epsilon_{g^{-1}}$ hold;

\item[(iv)] For every $g \in G$ the left $R$-module $S_g$ is finitely generated
and projective, and the map $n_g : (S_g)_R \rightarrow {\rm Hom}_R( {}_R S_{g^{-1}} , R )_R$,
defined by $n_g(s)(t) = ts$, for $s \in S_g$ and $t \in S_{g^{-1}}$,
is an isomorphism of right $R$-modules.

\end{enumerate}
\end{prop}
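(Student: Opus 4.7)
The plan is to prove the cycle (i)$\Rightarrow$(ii)$\Rightarrow$(iii)$\Rightarrow$(i) together with the equivalence (iii)$\Leftrightarrow$(iv). For (i)$\Rightarrow$(ii), the unitality of $S_g S_{g^{-1}}$ is part of the hypothesis, while setting $h=e$ in the identity $S_g S_h = S_g S_{g^{-1}} S_{gh}$ immediately yields the symmetric grading $S_g = S_g S_{g^{-1}} S_g$. For (ii)$\Rightarrow$(iii), I let $\epsilon_g$ denote the identity of $S_g S_{g^{-1}}$; given $s \in S_g$, symmetric grading allows me to write $s = \sum_i a_i b_i c_i$ with $a_i, c_i \in S_g$ and $b_i \in S_{g^{-1}}$, so that $\epsilon_g s = \sum_i (\epsilon_g a_i b_i) c_i = s$, and $s \epsilon_{g^{-1}} = s$ follows symmetrically from the fact that $\epsilon_{g^{-1}}$ is the identity of $S_{g^{-1}} S_g$.

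For (iii)$\Rightarrow$(i), I first observe that $S_g S_{g^{-1}}$ is an ideal of $R$ (from $R S_g \subseteq S_g$ and $S_{g^{-1}} R \subseteq S_{g^{-1}}$) and that $\epsilon_g$ is its two-sided identity, the right-identity property coming from $t \epsilon_g = t$ for $t \in S_{g^{-1}}$, which is (iii) applied to $g^{-1}$ (noting $\epsilon_{(g^{-1})^{-1}} = \epsilon_g$). The grading identities then follow for $s \in S_g$ and $t \in S_h$ from $st = \epsilon_g (st) \in S_g S_{g^{-1}} \cdot S_{gh}$ and $st = (st)\epsilon_{h^{-1}} \in S_{gh} \cdot S_{h^{-1}} S_h$; the reverse inclusions are immediate from graded multiplication.

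For (iii)$\Rightarrow$(iv), writing $\epsilon_{g^{-1}} = \sum_i c_i d_i$ with $c_i \in S_{g^{-1}}$ and $d_i \in S_g$, the identity $s = s \epsilon_{g^{-1}} = \sum_i (s c_i) d_i$ exhibits a finite dual basis $\{(d_i, \, s \mapsto s c_i)\}$ for ${}_R S_g$, which proves projectivity. For $n_g$, injectivity is immediate from $s = \epsilon_g s = \sum_i a_i \, n_g(s)(b_i)$ (where $\epsilon_g = \sum_i a_i b_i$), while for surjectivity I would define, given a left $R$-linear $\phi : S_{g^{-1}} \to R$, the element $s := \sum_i a_i \phi(b_i) \in S_g$ and compute $n_g(s)(t) = \sum_i (ta_i) \phi(b_i) = \phi(\sum_i (ta_i) b_i) = \phi(t \epsilon_g) = \phi(t)$, using the left $R$-linearity of $\phi$ together with $t \epsilon_g = t$ for $t \in S_{g^{-1}}$.

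The main obstacle is (iv)$\Rightarrow$(iii). Applying the dual basis theorem to the finitely generated projective left $R$-module $S_g$ yields $a_i \in S_g$ and left $R$-linear $f_i : S_g \to R$ with $s = \sum_i f_i(s) a_i$; the isomorphism $n_{g^{-1}}$ (obtained by applying (iv) to $g^{-1}$) represents each $f_i$ in the form $f_i(s) = s t_i$ for some $t_i \in S_{g^{-1}}$, whence $s = s \eta$ with $\eta := \sum_i t_i a_i \in S_{g^{-1}} S_g$. Performing the analogous construction with $g$ and $g^{-1}$ swapped yields $\mu \in S_g S_{g^{-1}}$ with $t \mu = t$ for all $t \in S_{g^{-1}}$. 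Setting $\epsilon_g := \mu$ and $\epsilon_{g^{-1}} := \eta$, the identity $s \epsilon_{g^{-1}} = s$ is by construction; the delicate step is $\epsilon_g s = s$ for $s \in S_g$, which is forced by the calculation $t(\mu s - s) = (t \mu) s - t s = 0$ for every $t \in S_{g^{-1}}$, so that $n_g(\mu s - s) = 0$, and injectivity of $n_g$ forces $\mu s = s$.
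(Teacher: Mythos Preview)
Your proof is correct and follows essentially the same route as the paper: the cycle (i)$\Rightarrow$(ii)$\Rightarrow$(iii)$\Rightarrow$(i) is handled identically, and for (iii)$\Leftrightarrow$(iv) both you and the paper use the dual basis lemma together with surjectivity of the relevant $n$-map to produce the idempotent, and then injectivity of $n_g$ to upgrade the one-sided identity property to a two-sided one. The only minor difference is organizational: in (iv)$\Rightarrow$(iii) you construct both $\eta$ and $\mu$, whereas the paper (and your own argument, in fact) only needs the $\mu$-construction---defining $\epsilon_g := \mu_g$ for every $g$ already gives $s\epsilon_{g^{-1}} = s\mu_{g^{-1}} = s$ directly from the defining property of $\mu_{g^{-1}}$, so the separate $\eta$ step is redundant (and introducing it raises a spurious consistency question about whether $\epsilon_{g^{-1}}$ means $\eta_g$ or $\mu_{g^{-1}}$).
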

\begin{proof} 
(i)$\Rightarrow$(ii):
This follows immediately from Definition~\ref{definitionepsilon}
by putting $h = e$.

(ii)$\Rightarrow$(iii):
Take $g \in G$ and $s \in S_g$. Let $\epsilon_g$ denote the multiplicative identity element
of $S_g S_{g^{-1}}$.
Using that $S$ is symmetrically graded, we may write
$s=\sum_{i=1}^n a_i b_i c_i$ where $a_1,\ldots,a_n,c_1,\ldots,c_n \in S_g$ and $b_1,\ldots,b_n \in S_{g^{-1}}$.
This yields
\begin{displaymath}
	\epsilon_g s = \sum_{i=1}^n \epsilon_g \underbrace{a_i b_i}_{\in S_g S_{g^{-1}}} c_i = \sum_{i=1}^n a_i b_i c_i = s
\end{displaymath}
and similarly
\begin{displaymath}
	 s \epsilon_{g^{-1}} = \sum_{i=1}^n a_i \underbrace{b_i c_i }_{\in S_{g^{-1}} S_g} \epsilon_{g^{-1}} = 
	\sum_{i=1}^n a_i b_i c_i = s.
\end{displaymath}

(iii)$\Rightarrow$(i):
Take $g,h \in G$. Then it follows that
$$S_g S_h = 
\epsilon_g S_g S_h \subseteq 
S_g S_{g^{-1}} S_g S_h \subseteq 
S_g S_{g^{-1}} S_{gh} \subseteq 
S_g S_{h}$$
and
$$S_g S_h = 
S_g S_h \epsilon_{ h^{-1} } \subseteq 
S_g S_h S_{h^{-1}} S_h \subseteq 
S_{gh}  S_{h^{-1}} S_h =
S_{g} S_h.$$ 
It is clear that $\epsilon_g$ is a multiplicative identity element of $S_g S_{g^{-1}}$.

(iii)$\Rightarrow$(iv):
Suppose that (iii) holds.
From the relation $\epsilon_{g^{-1}} \in S_{g^{-1}} S_g$ it follows that
there is $n \in \mathbb{N}$ and $u_i \in S_{g^{-1}}$ and
$v_i \in S_g$, for $i \in \{1,\ldots,n\}$, such that
$\sum_{i=1}^n u_i v_i = \epsilon_{g^{-1}}$.
For each $i \in \{ 1,\ldots,n \}$, define the left
$R$-linear map $f_i : S_g \rightarrow R$ by the relations
$f_i (s) = s u_i$, for $s \in S_g$. Take $s \in S_g$.
Then $s = s \epsilon_{g^{-1}} = 
\sum_{i=1}^n s u_i v_i = \sum_{i=1}^n f_i(s) v_i$.
Therefore $\{v_i\}_{i=1}^n$ and $\{f_i\}_{i=1}^n$
form ''dual bases'' and $S_g$ is therefore 
finitely generated and projective as a left $R$-module.
Next we show that $n_g$ is a monomorphism.
Suppose that $s \in S_g$ satisfies $n_g(s)=0$.
Then $s = \epsilon_g s \in S_g S_{g^{-1}} s =
S_g n_g(s)( S_{g^{-1}} ) = \{ 0 \}$.
Therefore $s = 0$.
Now we show that $n_g$ is surjective.
There is $n \in \mathbb{N}$ and $a_i \in S_g$ and
$v_i \in S_{g^{-1}}$, for $i \in \{1,\ldots,n\}$, such that
$\sum_{i=1}^n a_i b_i = \epsilon_g$.
For each $i \in \{ 1,\ldots,n \}$, define the left
$R$-linear map $g_i : S_{g^{-1}} \rightarrow R$ by the relations
$g_i (s) = s a_i$, for $s \in S_{g^{-1}}$. Take $t \in S_{g^{-1}}$.
Then $t = t \epsilon_g = 
\sum_{i=1}^n t a_i b_i = \sum_{i=1}^n g_i(t) b_i$.
Take $f \in {\rm Hom}_R ( {}_R S_{g^{-1}} , R )_R$.
Then $f(t) = \sum_{i = 1}^n g_i(t) f(b_i) = 
t \sum_{i=1}^n a_i f(b_i) = n_g( \sum_{i=1}^n a_i f(b_i) )(t)$.
Therefore, $f = n_g( \sum_{i=1}^n a_i f(b_i) )$.
Hence, $n_g$ is surjective.

(iv)$\Rightarrow$(iii):
Take $a \in S_g$ and $b \in S_{g^{-1}}$.
Suppose that the left $R$-module $S_g$ is finitely generated
and projective, and that the map 
$n_g$ is an isomorphism of right $R$-modules.
The dual basis lemma shows that there are 
$b_1,\ldots,b_n \in S_{g^{-1}}$ and
$f_1,\ldots,f_n \in {\rm Hom}_R (S_{g^{-1}} , R )$ such that
$b = \sum_{i=1}^n f_i(b) b_i$.
For every $i \in \{1,\ldots,n\}$, there is $a_i \in S_g$ such that $n_g(a_i) = f_i$.
Hence $b = \sum_{i=1}^n n_g(a_i)(b) b_i = 
\sum_{i=1}^n b a_i b_i = b \epsilon_g$, where 
$\epsilon_g = \sum_{i=1}^n a_i b_i \in S_g S_{g^{-1}}$.
This shows that $S_{g^{-1}} ( 1 - \epsilon_g ) = \{ 0 \}$.
Therefore $S_{g^{-1}} (1 - \epsilon_g) a = \{ 0 \}$ and thus
$n_g( (1-\epsilon_g) a ) (S_{g^{-1}}) = \{ 0 \}$.
This implies that $n_g ( (1-\epsilon_g)a ) = 0$.
But since $n_g$ is injective, we finally get that
$(1 - \epsilon_g)a = 0$ and hence $a = \epsilon_g a$.
Therefore $S$ is epsilon-strongly graded by $G$. 
\end{proof}

%\begin{rem}
%In principle, it would be possible to use Definition~\ref{definitionepsilon}
%to define non-unital epsilon-strongly graded rings.
%Notice that, if such a ring would have a set of local units which were contained in $R$,
%then the equivalences between (i), (ii) and (iii) in Proposition~\ref{epsilon1} would still hold.
%Consequently, in that case $R$ would need to be unital.
%\end{rem}

\begin{prop}
If $S$ is epsilon-strongly graded by $G$,
then $S$ is strongly graded by $G$ if and only if 
for every $g \in G$ the equality $\epsilon_g = 1$ holds.
\end{prop}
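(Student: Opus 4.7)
The plan is to prove both implications directly from the definitions, using only the epsilon-strong equality $S_g S_h = S_g S_{g^{-1}} S_{gh}$ and the fact that $\epsilon_g$ is the multiplicative identity of the unital ideal $S_g S_{g^{-1}}$.

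For the forward direction, I would assume that $S$ is strongly graded and fix $g \in G$. Then $S_g S_{g^{-1}} = S_e = R$, so the ideal whose identity element is $\epsilon_g$ coincides with $R$ itself. Since $R$ has $1$ as its identity and a unital ring has a unique identity element, this forces $\epsilon_g = 1$.

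For the reverse direction, I would assume $\epsilon_g = 1$ for every $g \in G$ and fix $g, h \in G$. From $\epsilon_g \in S_g S_{g^{-1}}$ together with the fact that $S_g S_{g^{-1}}$ is an ideal of $R$, the hypothesis $\epsilon_g = 1$ yields $R = R \cdot 1 = R \cdot \epsilon_g \subseteq S_g S_{g^{-1}} \subseteq R$, so $S_g S_{g^{-1}} = R$. Plugging this into the epsilon-strong identity from Definition~\ref{definitionepsilon} gives $S_g S_h = S_g S_{g^{-1}} S_{gh} = R \cdot S_{gh} = S_{gh}$, which is the strong grading condition.

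I do not expect any genuine obstacle here; the statement is essentially a direct unpacking of Definition~\ref{definitionepsilon} once one observes that a unital ideal of $R$ whose identity coincides with $1_R$ must equal $R$. The only mild subtlety is being careful that $\epsilon_g$ is \emph{defined} as the identity of $S_g S_{g^{-1}}$, so the equivalence ``$\epsilon_g = 1$'' $\Leftrightarrow$ ``$S_g S_{g^{-1}} = R$'' should be made explicit before invoking the epsilon-strong factorization.
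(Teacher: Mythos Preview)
Your proof is correct and follows essentially the same approach as the paper: both directions hinge on the equivalence between $\epsilon_g = 1$ and $S_g S_{g^{-1}} = R$, established via uniqueness of the identity in a unital ideal. Your backward direction is in fact slightly more explicit than the paper's, since you invoke the epsilon-strong identity $S_g S_h = S_g S_{g^{-1}} S_{gh}$ to conclude $S_g S_h = S_{gh}$, whereas the paper stops at $S_g S_{g^{-1}} = R$ and leaves the standard implication to the reader.
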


\begin{proof}
Suppose that $S$ is strongly graded by $G$.
Take $g \in G$. Since $S_g S_{g^{-1}} = R$, we get that $\epsilon_g = 1$.
Now suppose that $S$ is epsilon-strongly graded with $\epsilon_g = 1,$
for all $g \in G$. Since $S_g S_{g^{-1}}$ is a unital ideal
of $R$ with 1 as a multiplicative identity, it follows that
$R = R1 \subseteq R S_g S_{g^{-1}} \subseteq R$.
Therefore $S_g S_{g^{-1}} = R$. 
\end{proof}

\section{Separability}\label{sectionseparability}

In this section, 
we shall assume that $S$ is an arbitrary unital ring which is epsilon-strongly graded by $G$.  We will, for each $g \in G$, introduce 
an additive function $\gamma_g : S \rightarrow S$
(see Definition~\ref{definitiongamma}).
These functions will in turn be used to define a trace function 
${\rm tr}_{\gamma} : Z(R)_{\rm fin} \rightarrow Z(R)$
(see Definition~\ref{definitiontrace}).
At the end of this section, we prove Theorem~\ref{maintheorem}.
Let $\mathbb{N}$ denote the set of positive integers.

\begin{defi}\label{definitiongamma}
Let $g \in G$ be arbitrary.
From the relation $\epsilon_g \in S_g S_{g^{-1}}$ 
it follows that there is $n_g \in \mathbb{N}$,
and $u_g^{(i)} \in S_g$ and $v_{g^{-1}}^{(i)} \in S_{g^{-1}}$,
for $i \in \{1,\ldots,n_g\}$,
such that $\sum_{i=1}^{n_g} u_g^{(i)} v_{g^{-1}}^{(i)} = \epsilon_g$.
Unless otherwise stated, the elements 
$u_g^{(i)}$ and $v_{g^{-1}}^{(i)}$ are fixed.
We also assume that $n_e = 1$ and $u_e^{(1)} = v_e^{(1)} = 1$.
Define the additive function $\gamma_g : S \rightarrow S$ by
$\gamma_g(s) = \sum_{i=1}^{n_g} u_g^{(i)} s v_{g^{-1}}^{(i)}$, for $s \in S$.
\end{defi}

\begin{prop}
For any $g \in G$ and $r \in Z(R)$, the definition of $\gamma_g(r)$
does not depend on the choice of the 
elements $u_g^{(i)}$ and $v_{g^{-1}}^{(i)}$.
\end{prop}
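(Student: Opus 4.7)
The plan is to show that any two choices of decomposition of $\epsilon_g$ give the same value of $\gamma_g(r)$ when $r \in Z(R)$. So, suppose we have alternative elements $a_g^{(j)} \in S_g$ and $b_{g^{-1}}^{(j)} \in S_{g^{-1}}$, for $j \in \{1,\ldots,m_g\}$, satisfying $\sum_{j=1}^{m_g} a_g^{(j)} b_{g^{-1}}^{(j)} = \epsilon_g$. I will show that
\[
\sum_{i=1}^{n_g} u_g^{(i)} r v_{g^{-1}}^{(i)} \;=\; \sum_{j=1}^{m_g} a_g^{(j)} r b_{g^{-1}}^{(j)}.
\]

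First, I would invoke characterization (iii) of Proposition~\ref{epsilon1} to note that $\epsilon_g$ (resp. $\epsilon_{g^{-1}}$) acts as a two-sided identity on elements of $S_g$ and $S_{g^{-1}}$; in particular, for each $i$, one has $v_{g^{-1}}^{(i)} = v_{g^{-1}}^{(i)} \epsilon_g$, and for each $j$, one has $a_g^{(j)} = \epsilon_g a_g^{(j)}$. Next, I would insert the second decomposition of $\epsilon_g$ on the right of $v_{g^{-1}}^{(i)}$ to obtain
\[
\sum_{i=1}^{n_g} u_g^{(i)} r v_{g^{-1}}^{(i)} \;=\; \sum_{i=1}^{n_g} u_g^{(i)} r v_{g^{-1}}^{(i)} \epsilon_g \;=\; \sum_{i=1}^{n_g} \sum_{j=1}^{m_g} u_g^{(i)} r \, v_{g^{-1}}^{(i)} a_g^{(j)} \, b_{g^{-1}}^{(j)}.
\]

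The crucial step is that each product $v_{g^{-1}}^{(i)} a_g^{(j)}$ lies in $S_{g^{-1}} S_g \subseteq R$. Since $r \in Z(R)$, it commutes with every element of $R$, so we may move $r$ past $v_{g^{-1}}^{(i)} a_g^{(j)}$. This gives
\[
\sum_{i,j} u_g^{(i)} \, v_{g^{-1}}^{(i)} a_g^{(j)} \, r \, b_{g^{-1}}^{(j)} \;=\; \sum_{j=1}^{m_g} \Bigl( \sum_{i=1}^{n_g} u_g^{(i)} v_{g^{-1}}^{(i)} \Bigr) a_g^{(j)} r b_{g^{-1}}^{(j)} \;=\; \sum_{j=1}^{m_g} \epsilon_g a_g^{(j)} r b_{g^{-1}}^{(j)},
\]
which, using $\epsilon_g a_g^{(j)} = a_g^{(j)}$, collapses to $\sum_{j=1}^{m_g} a_g^{(j)} r b_{g^{-1}}^{(j)}$, as required.

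No genuine obstacle arises; the proof is a direct bookkeeping argument whose only ingredients are (a) the unital behaviour of $\epsilon_g$ on $S_g$ and $S_{g^{-1}}$ granted by Proposition~\ref{epsilon1}(iii), (b) the containment $S_{g^{-1}} S_g \subseteq R$, and (c) the centrality of $r$. The minor subtlety to keep in mind is that one cannot commute $r$ past $v_{g^{-1}}^{(i)}$ alone, since $v_{g^{-1}}^{(i)}$ need not lie in $R$; this is precisely why the second decomposition of $\epsilon_g$ must be inserted so as to form the $R$-valued product $v_{g^{-1}}^{(i)} a_g^{(j)}$ before the centrality of $r$ can be exploited.
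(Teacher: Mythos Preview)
Your proof is correct and follows essentially the same approach as the paper's. The only cosmetic difference is that the paper inserts the alternative decomposition of $\epsilon_g$ on the left (using $\epsilon_g u_g^{(i)} = u_g^{(i)}$), whereas you insert it on the right (using $v_{g^{-1}}^{(i)} \epsilon_g = v_{g^{-1}}^{(i)}$); both variants hinge on the same three ingredients you identified.
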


\begin{proof}
Take $m_g \in \mathbb{N}$,
$s_g^{(j)} \in S_g$ and $t_{g^{-1}}^{(j)} \in S_{g^{-1}}$,
for $j \in \{1,\ldots,m_g\}$,
such that $\sum_{j=1}^{m_g} s_g^{(j)} t_{g^{-1}}^{(j)} = \epsilon_g$.
Then
\begin{displaymath}
\gamma_g(r) = 
\sum_{i=1}^{n_g} u_g^{(i)} r v_{g^{-1}}^{(i)} =
\sum_{i=1}^{n_g} \epsilon_g u_g^{(i)} r v_{g^{-1}}^{(i)} =
\sum_{i=1}^{n_g} \sum_{j=1}^{m_g} s_g^{(j)} t_{g^{-1}}^{(j)} u_g^{(i)} r v_{g^{-1}}^{(i)}.
\end{displaymath}
Since $t_{g^{-1}}^{(j)} u_g^{(i)} \in R$ and $r \in Z(R)$,
the last sum equals
\begin{displaymath}
\sum_{i=1}^{n_g} \sum_{j=1}^{m_g} s_g^{(j)} r t_{g^{-1}}^{(j)} u_g^{(i)} v_{g^{-1}}^{(i)} =
\sum_{j=1}^{m_g} s_g^{(j)} r t_{g^{-1}}^{(j)} \epsilon_g = 
\sum_{j=1}^{m_g} s_g^{(j)} r t_{g^{-1}}^{(j)}.
\end{displaymath}
\end{proof}

\begin{prop}\label{composition}
For any $g,h \in G$ and $r \in Z(R)$,
$\gamma_g( \gamma_h ( r ) ) = \gamma_{gh}(r) \epsilon_g$ holds.
\end{prop}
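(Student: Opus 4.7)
The plan is to expand $\gamma_g(\gamma_h(r))$ into a double sum and rearrange it so that $r$ slides out to the left of the expression, exposing the decomposition of $\epsilon_{gh}$, and eventually leaving $\epsilon_g$ as a right factor. The essential tools are the two one-sided identity properties from Proposition~\ref{epsilon1}(iii): $\epsilon_{gh} s = s$ for $s \in S_{gh}$, and $s \epsilon_h = s$ for $s \in S_{h^{-1}}$.

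Concretely, starting from
\[
\gamma_g(\gamma_h(r)) = \sum_{i,j} u_g^{(i)} u_h^{(j)} \, r \, v_{h^{-1}}^{(j)} v_{g^{-1}}^{(i)},
\]
I would first insert $\epsilon_{gh} = \sum_k u_{gh}^{(k)} v_{(gh)^{-1}}^{(k)}$ on the left of each summand; this is permissible since $u_g^{(i)} u_h^{(j)} \in S_g S_h \subseteq S_{gh}$. Because $v_{(gh)^{-1}}^{(k)} u_g^{(i)} u_h^{(j)} \in S_{(gh)^{-1}} S_{gh} \subseteq R$ and $r \in Z(R)$, I can then commute $r$ past this block. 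Collapsing the inner $j$-sum via $\sum_j u_h^{(j)} v_{h^{-1}}^{(j)} = \epsilon_h$ yields
\[
\sum_{i,k} u_{gh}^{(k)} \, r \, v_{(gh)^{-1}}^{(k)} u_g^{(i)} \epsilon_h v_{g^{-1}}^{(i)}.
\]

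At this point a degree count shows that $v_{(gh)^{-1}}^{(k)} u_g^{(i)} \in S_{(gh)^{-1}} S_g \subseteq S_{h^{-1}}$, and therefore $v_{(gh)^{-1}}^{(k)} u_g^{(i)} \epsilon_h = v_{(gh)^{-1}}^{(k)} u_g^{(i)}$ by the second one-sided identity of Proposition~\ref{epsilon1}(iii). Collapsing the remaining sum via $\sum_i u_g^{(i)} v_{g^{-1}}^{(i)} = \epsilon_g$ then delivers $\gamma_{gh}(r) \epsilon_g$, as required.

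The main delicate point is keeping track of which graded component each partial product lives in — in particular recognising that $v_{(gh)^{-1}}^{(k)} u_g^{(i)}$ belongs to $S_{h^{-1}}$, which is exactly what licenses absorbing the stray $\epsilon_h$ as a right identity. Beyond this degree bookkeeping, two invocations of Proposition~\ref{epsilon1}(iii), and the centrality of $r$, no additional machinery is needed.
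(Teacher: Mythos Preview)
Your proposal is correct and follows essentially the same route as the paper's own proof: expand the double sum, insert $\epsilon_{gh}$ on the left using $u_g^{(i)} u_h^{(j)} \in S_{gh}$, commute $r$ past the resulting element of $R$, collapse the inner sum to $\epsilon_h$, absorb $\epsilon_h$ on the right via $v_{(gh)^{-1}}^{(k)} u_g^{(i)} \in S_{h^{-1}}$, and finally collapse the remaining sum to $\epsilon_g$. The only differences from the paper are cosmetic (index labels and writing $(gh)^{-1}$ versus $h^{-1}g^{-1}$).
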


\begin{proof}  
From the definitions of $\gamma_g$ and $\gamma_h$, it follows that
\begin{displaymath}
\gamma_g ( \gamma_h ( r ) ) = 
\sum_{i=1}^{n_h} \gamma_g ( u_h^{(i)} r v_{h^{-1}}^{(i)} ) =
\sum_{i=1}^{n_h} \sum_{j=1}^{n_g} u_g^{(j)} u_h^{(i)} r v_{h^{-1}}^{(i)} v_{g^{-1}}^{(j)}.
\end{displaymath}
Since $u_g^{(j)} u_h^{(i)} \in S_{gh}$, the last sum equals
\begin{displaymath}
\sum_{i=1}^{n_h} \sum_{j=1}^{n_g} 
\epsilon_{gh} u_g^{(j)} u_h^{(i)} r v_{h^{-1}}^{(i)} v_{g^{-1}}^{(j)} =
\sum_{i=1}^{n_h} \sum_{j=1}^{n_g} \sum_{k=1}^{n_{gh}}  
u_{gh}^{(k)} v_{h^{-1} g^{-1}}^{(k)}
u_g^{(j)} u_h^{(i)} r v_{h^{-1}}^{(i)} v_{g^{-1}}^{(j)}.
\end{displaymath}
Using that $r \in Z(R)$ and 
$v_{h^{-1} g^{-1}}^{(k)} u_g^{(j)} u_h^{(i)} \in R$, the last sum equals
\begin{displaymath}
\sum_{i=1}^{n_h} \sum_{j=1}^{n_g} \sum_{k=1}^{n_{gh}}  
u_{gh}^{(k)} r v_{h^{-1} g^{-1}}^{(k)}
u_g^{(j)} u_h^{(i)} v_{h^{-1}}^{(i)} v_{g^{-1}}^{(j)} = 
\sum_{j=1}^{n_g} \sum_{k=1}^{n_{gh}}  
u_{gh}^{(k)} r v_{h^{-1} g^{-1}}^{(k)}
u_g^{(j)} \epsilon_h v_{g^{-1}}^{(j)}.
\end{displaymath}
Since $v_{h^{-1} g^{-1}}^{(k)} u_g^{(j)} \in S_{h^{-1}}$,
the last sum equals
\begin{displaymath}
\sum_{j=1}^{n_g} \sum_{k=1}^{n_{gh}}  
u_{gh}^{(k)} r v_{h^{-1} g^{-1}}^{(k)}
u_g^{(j)} v_{g^{-1}}^{(j)} = 
\sum_{k=1}^{n_{gh}}  
u_{gh}^{(k)} r v_{h^{-1} g^{-1}}^{(k)} \epsilon_g = 
\gamma_{gh}(r) \epsilon_g .
\end{displaymath}
\end{proof}

\begin{prop}\label{restriction}
Let $g\in G$ be arbitrary.
The additive function $\gamma_g : S \rightarrow S$
restricts to a surjective ring homomorphism 
$Z(R) \rightarrow Z(R) \epsilon_g$
which satisfies the relation
$\gamma_g(r) s_g = s_g r$, for all $r\in Z(R), s_g \in S_g$.
This function, in turn, restricts to a ring isomorphism
$Z(R) \epsilon_{g^{-1}} \rightarrow Z(R) \epsilon_g$.
\end{prop}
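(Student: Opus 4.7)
My plan is to first establish the intertwining relation $\gamma_g(r) s_g = s_g r$ and then lever it into all of the remaining clauses. For $r \in Z(R)$ and $s_g \in S_g$, a direct computation gives $\gamma_g(r) s_g = \sum_{i=1}^{n_g} u_g^{(i)} r v_{g^{-1}}^{(i)} s_g$; since $v_{g^{-1}}^{(i)} s_g \in S_{g^{-1}} S_g \subseteq R$ commutes with $r \in Z(R)$, sliding $r$ to the right yields $\sum_i u_g^{(i)} v_{g^{-1}}^{(i)} s_g r = \epsilon_g s_g r = s_g r$, where the last step uses Proposition~\ref{epsilon1}(iii). A degree count shows $\gamma_g(r) \in S_g R S_{g^{-1}} \subseteq S_g S_{g^{-1}}$, so in particular $\gamma_g(r) = \epsilon_g \gamma_g(r) = \gamma_g(r) \epsilon_g$ lies in $R$.

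Next I would show that $\gamma_g(r) \in Z(R)$. For any $r' \in R$ and $s_g \in S_g$, two applications of the intertwining relation (using that $r' s_g \in S_g$) give $(\gamma_g(r) r' - r' \gamma_g(r)) s_g = (r' s_g) r - r'(s_g r) = 0$. Thus $\gamma_g(r) r' - r' \gamma_g(r)$ annihilates $S_g$, hence also $S_g S_{g^{-1}}$, so it kills $\epsilon_g$; but this element itself lies in the ideal $S_g S_{g^{-1}}$ on which $\epsilon_g$ acts as identity, forcing it to vanish. An entirely analogous right-multiply-by-$s_g$ trick (both $\gamma_g(r_1 r_2) s_g$ and $\gamma_g(r_1)\gamma_g(r_2) s_g$ reduce to $s_g r_1 r_2$) delivers multiplicativity $\gamma_g(r_1 r_2) = \gamma_g(r_1)\gamma_g(r_2)$. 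Additivity is built into the definition, and $\gamma_g(1) = \sum_i u_g^{(i)} v_{g^{-1}}^{(i)} = \epsilon_g$ is the identity of $Z(R) \epsilon_g$, so $\gamma_g$ restricts to a ring homomorphism $Z(R) \to Z(R) \epsilon_g$.

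For surjectivity onto $Z(R) \epsilon_g$, I would invoke Proposition~\ref{composition}: given $r \in Z(R)$, the element $\gamma_{g^{-1}}(r) \in Z(R)$ satisfies $\gamma_g(\gamma_{g^{-1}}(r)) = \gamma_e(r) \epsilon_g = r \epsilon_g$, using $\gamma_e = \mathrm{id}_R$ (which follows from the conventions $n_e = 1$ and $u_e^{(1)} = v_e^{(1)} = 1$). To upgrade this to a ring isomorphism $Z(R) \epsilon_{g^{-1}} \to Z(R) \epsilon_g$, I note that $\gamma_g(\epsilon_{g^{-1}}) = \gamma_g(\gamma_{g^{-1}}(1)) = \epsilon_g$ again by Proposition~\ref{composition}, so $\gamma_g(r \epsilon_{g^{-1}}) = \gamma_g(r) \epsilon_g = \gamma_g(r)$ for every $r \in Z(R)$, which transports the already-proved surjectivity onto the subring $Z(R) \epsilon_{g^{-1}}$. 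For injectivity, if $r \in Z(R) \epsilon_{g^{-1}}$ and $\gamma_g(r) = 0$, then Proposition~\ref{composition} gives $0 = \gamma_{g^{-1}}(\gamma_g(r)) = \gamma_e(r) \epsilon_{g^{-1}} = r \epsilon_{g^{-1}} = r$.

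The main obstacle I expect is the centrality step: one must recognize that the intertwining relation is strong enough to force $\gamma_g(r) r' - r' \gamma_g(r)$ to annihilate $S_g$ on the right, and then exploit the fact that $\epsilon_g$ is a genuine identity on the ideal $S_g S_{g^{-1}}$ to conclude vanishing. Once this bookkeeping with the idempotents $\epsilon_g$ is in place, all remaining clauses drop out of Propositions~\ref{epsilon1} and~\ref{composition}.
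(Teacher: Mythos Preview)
Your proof is correct, and the overall logical skeleton (using Proposition~\ref{composition} for both surjectivity and injectivity of the restricted map) matches the paper. Where you genuinely diverge is in the order of operations and the mechanism for centrality and multiplicativity. The paper first proves $\gamma_g(r)\in Z(R)$ and $\gamma_g(rr')=\gamma_g(r)\gamma_g(r')$ by brute-force computations of the type ``insert an extra factor of $\epsilon_g=\sum_j u_g^{(j)}v_{g^{-1}}^{(j)}$ and shuffle indices'', and only afterwards derives the intertwining identity $\gamma_g(r)s_g=s_gr$. You instead prove the intertwining identity first (a one-line computation) and then use it as a lever: showing that $\gamma_g(r)r'-r'\gamma_g(r)$ and $\gamma_g(r_1r_2)-\gamma_g(r_1)\gamma_g(r_2)$ annihilate $S_g$ on the right, hence annihilate $\epsilon_g$, and hence vanish because they live in the unital ideal $S_gS_{g^{-1}}$. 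This is a cleaner and more conceptual route---it isolates the single relation that drives everything---whereas the paper's approach is self-contained and avoids the annihilation bookkeeping. One small point worth making explicit in your write-up: in the multiplicativity step, $\gamma_g(r_1)\gamma_g(r_2)s_g$ unwinds to $s_g r_2 r_1$, not $s_g r_1 r_2$, so you are silently using that $r_1,r_2\in Z(R)$ commute.
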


\begin{proof}
First we show that $\gamma_g(Z(R)) \subseteq Z(R) \epsilon_g$.
From the definition of $\gamma_g$ 
it follows that $\gamma_g(S) \subseteq S \epsilon_g$.
Thus, since $\epsilon_g$ is idempotent, we only need to show that $\gamma_g(Z(R)) \subseteq Z(R)$.
To this end, take $r \in Z(R)$ and $r' \in R$. Then,
since $v_{g^{-1}}^{(i)} r' \in S_{g^{-1}}$, we get that
\begin{displaymath}
\gamma_g(r) r' = 
\sum_{i=1}^{n_g} u_g^{(i)} r v_{g^{-1}}^{(i)} r' = 
\sum_{i=1}^{n_g} u_g^{(i)} r v_{g^{-1}}^{(i)} r' \epsilon_g =
\sum_{i=1}^{n_g} \sum_{j=1}^{n_g} u_g^{(i)} r v_{g^{-1}}^{(i)} r' u_g^{(j)} v_{g^{-1}}^{(j)}.
\end{displaymath}
Using that $r \in Z(R)$, $v_{g^{-1}}^{(i)} r' u_g^{(j)} \in R$
and $r' u_g^{(j)} \in S_g$, the last sum equals
\begin{displaymath}
\sum_{i=1}^{n_g} \sum_{j=1}^{n_g} 
u_g^{(i)}  v_{g^{-1}}^{(i)} r' u_g^{(j)} r v_{g^{-1}}^{(j)} =
\sum_{j=1}^{n_g} 
\epsilon_g r' u_g^{(j)} r v_{g^{-1}}^{(j)} =
\sum_{j=1}^{n_g} r' u_g^{(j)} r v_{g^{-1}}^{(j)} =
r' \gamma_g(r).
\end{displaymath}
This shows that $\gamma_g(r) \in Z(R)$.
Now we show that the restriction of $\gamma_g$ to $Z(R)$
respects multiplication.
Take $r,r' \in Z(R)$. Then
\begin{displaymath}
\gamma_g(r r') = 
\sum_{i=1}^{n_g} u_g^{(i)} r r' v_{g^{-1}}^{(i)} =
\sum_{i=1}^{n_g} \epsilon_g u_g^{(i)} r r' v_{g^{-1}}^{(i)} =
\sum_{i=1}^{n_g} \sum_{j=1}^{n_g}  
u_g^{(j)} v_{g^{-1}}^{(j)} u_g^{(i)} r r' v_{g^{-1}}^{(i)}.
\end{displaymath}
Since $r \in Z(R)$ and $v_{g^{-1}}^{(j)} u_g^{(i)} \in R$,
the last sum equals
\begin{displaymath}
\sum_{i=1}^{n_g} \sum_{j=1}^{n_g}  
u_g^{(j)} r v_{g^{-1}}^{(j)} u_g^{(i)} r' v_{g^{-1}}^{(i)} =
\sum_{j=1}^{n_g} u_g^{(j)} r v_{g^{-1}}^{(j)} 
\sum_{i=1}^{n_g} u_g^{(i)} r' v_{g^{-1}}^{(i)} =
\gamma_g(r) \gamma_g(r').
\end{displaymath}
Next, we show that the restriction
$Z(R) \rightarrow Z(R) \epsilon_g$ is 
surjective. Take $r \in Z(R)$.
From Proposition~\ref{composition}, we get that
$\gamma_g ( \gamma_{g^{-1}} ( r \epsilon_g ) ) = 
\gamma_e(r \epsilon_g) \epsilon_g = r \epsilon_g^2 = r \epsilon_g.$
For any $s_g \in S_g$, using that $v_{g^{-1}}^{(i)} s_g \in R$, we conclude that
\begin{displaymath}
\gamma_g(r) s_g =
\sum_{i=1}^{n_g} u_g^{(i)} r v_{g^{-1}}^{(i)} s_g
= \sum_{i=1}^{n_g} u_g^{(i)} v_{g^{-1}}^{(i)} s_g r
= \epsilon_g s_g r = s_g r.
\end{displaymath}

Finally, we need to show that the restriction 
$Z(R) \epsilon_{g^{-1}} \rightarrow Z(R) \epsilon_g$
is injective. Suppose that $r \in Z(R)$ is chosen so that 
$\gamma_g( r \epsilon_{g^{-1}} ) = 0$.
From Proposition~\ref{composition}, we get that
$r \epsilon_{g^{-1}} = r \epsilon_{g^{-1}}^2 = 
\gamma_e(r \epsilon_{g^{-1}} ) \epsilon_{g^{-1}} =
\gamma_{g^{-1}} ( \gamma_g ( r \epsilon_{g^{-1}} ) ) = 0$.
\end{proof}

\begin{rem}
Notice that, by Proposition~\ref{composition} and Proposition~\ref{restriction}, 
the collection of (restriction) maps $\gamma_g : Z(R) \epsilon_{g^{-1}} \rightarrow Z(R) \epsilon_g$, for $g\in G$,
yields a partial action of $G$ on $Z(R)$.
\end{rem}

\begin{defi}\label{definitiontrace}
Let $Z(R)_{\rm fin}$ denote the set of  $r \in Z(R)$ 
such that for all but finitely many $g \in G$,
the relation $r \epsilon_g = 0$ holds.
Define the {\it trace} function ${\rm tr}_{\gamma} : Z(R)_{\rm fin} \rightarrow Z(R)$
by ${\rm tr}_{\gamma}(r) = \sum_{g \in G} \gamma_g(r)$, for $r \in R$.
\end{defi}

\begin{rem}
Recall that if we for every $g \in G$ put
\begin{displaymath}
(S \otimes_R S)_g = 
\bigoplus\limits_{(g',g'')\in G \times G, \atop g'g''=g} S_{g'} \otimes_R S_{g''},
\end{displaymath}
then this defines a graded $R$-bimodule structure on $S \otimes_R S$.
We will refer to this as the $G$-grading of $S \otimes_R S$.
There is another type of grading on $S \otimes_R S$ that we will also use.
If we, for every $(g,h) \in G \times G$, put 
\begin{displaymath}
(S \otimes_R S)_{(g,h)} = S_g \otimes_R S_h,
\end{displaymath}
then this defines a graded additive structure on $S \otimes_R S$.
We will refer to this as the $(G \times G)$-grading on $S \otimes_R S$.
For more details concerning these gradings, see \cite{nas82}. 
\end{rem}

\subsection*{Proof of Theorem~\ref{maintheorem}.}
First we show the  ``if'' statement.
Suppose that there is $c \in Z(R)_{\rm fin}$ such that ${\rm tr}_{\gamma}(c) = 1$.
We wish to show that $S/R$ is separable.
To this end, put 
$x = \sum_{g \in G} \sum_{i=1}^{n_g} u_g^{(i)} c \otimes v_{g^{-1}}^{(i)}$.
From the definition of $Z(R)_{\rm fin}$ it follows that
$x$ is well-defined, since for all but finitely many $g \in G$,
we get that $u_g^{(i)} c  = u_g^{(i)} \epsilon_{g^{-1}} c = 
u_g^{(i)} 0 = 0$. Now
\begin{displaymath}
m(x) = \sum_{g \in G} \sum_{i=1}^{n_g} u_g^{(i)} c v_{g^{-1}}^{(i)} =
\sum_{g \in G} \gamma_g(c)  = {\rm tr}_{\gamma}(c) = 1.
\end{displaymath}
Next we show that $x$ commutes with all elements of $S$.
To this end, take $h \in G$ and $s \in S_h$. 
Using that $s u_g^{(i)} \in S_{hg}$, we get that
\begin{align*}s x& = 
\sum_{g \in G} \sum_{i=1}^{n_g} s u_g^{(i)} c \otimes v_{g^{-1}}^{(i)} =
\sum_{g \in G} \sum_{i=1}^{n_g} \epsilon_{hg} s u_g^{(i)} c \otimes v_{g^{-1}}^{(i)} \\
&=\sum_{g \in G} \sum_{i=1}^{n_g} \sum_{j=1}^{n_{hg}}
u_{hg}^{(j)} v_{g^{-1}h^{-1}}^{(j)} s u_g^{(i)} c \otimes v_{g^{-1}}^{(i)}.\end{align*}
Since $c \in Z(R)$ and $v_{g^{-1}h^{-1}}^{(j)} s u_g^{(i)} \in R$,
the last sum equals
\begin{displaymath}
\sum_{g \in G} \sum_{i=1}^{n_g} \sum_{j=1}^{n_{hg}}
u_{hg}^{(j)} c \otimes v_{g^{-1}h^{-1}}^{(j)} s u_g^{(i)} v_{g^{-1}}^{(i)} = 
\sum_{g \in G} \sum_{j=1}^{n_{hg}}
u_{hg}^{(j)} c \otimes v_{g^{-1}h^{-1}}^{(j)} s\epsilon_g.
\end{displaymath}
Using that $v_{g^{-1}h^{-1}}^{(j)} s \in S_{g^{-1}}$, 
the last sum equals
$\sum_{g \in G} \sum_{j=1}^{n_{hg}}
u_{hg}^{(j)} c \otimes v_{g^{-1}h^{-1}}^{(j)} s = x s.$
Therefore, $x$ is a separability
element for $S/R$.

Now we show the  ``only if'' statement.
Suppose that $x \in S \otimes_R S$ is a separability element for $S/R$.
Then $x$ satisfies
$m(x) = 1$ and, for each $s \in S$, the relation $x s = s x$ holds.
From the $G$-grading on $S \otimes_R S$ it follows
that there are unique $y \in (S \otimes_R S)_e$
and $z \in \oplus_{g \in G \setminus \{ e \}} (S \otimes_R S)_g$
such that $x = y + z$. Then we get that 
$1 = m(x) = m(y) + m(z)$. But since $1 \in R$ and 
$m(z) \in \oplus_{g \in G \setminus \{ e \}} S_g$,
we get that $m(z)=0$ and $m(y)=1$.
We claim that for each $s \in S$, the relation $y s = s y$ holds.
To this end, take $h \in G$ and $s \in S_h$.
Then $0 = s x - x s = sy - ys + sz - zs$.
Since $sy - ys \in (S \otimes_R S)_h$ and $sz - zs \in 
\sum_{g \in G \setminus \{ e \}} \left( (S \otimes_R S)_{hg} + (S \otimes_R S)_{gh} \right)$,
we get that $sy - ys = 0$ and $sz - zs = 0$.
In particular, $y$ is also a separability element for $S/R$.
Now, for each $g \in G$, there is $d_g \in S_g \otimes_R S_{g^{-1}}$
such that for all but finitely many $g \in G$, $d_g = 0$,
and $y = \sum_{g \in G} d_g$.
Furthermore, for each $g \in G$, there is $l_g \in \mathbb{N}$, and
$a_g^{(i)} \in S_g$ and $b_{g^{-1}}^{(i)} \in S_{g^{-1}}$,
for $i \in \{1,\ldots,l_g\}$, such that
$d_g = \sum_{i=1}^{l_g} a_g^{(i)} \otimes b_{g^{-1}}^{(i)}$.
For each $g \in G$, put $c_g = m(d_g)$.
Take $h \in G$. 
From the $(G \times G)$-grading on $S \otimes_R S$ it follows that
$d_h$ commutes with every $r \in R$.
Therefore, it follows that $c_h \in Z(R)$.
Take $s \in S_h$.
From the $(G \times G)$-grading on $S \otimes_R S$ it 
also follows that $s d_e = d_h s$.
Applying $m$ gives us that
$s c_e = c_h s$.
In particular, we get that
\begin{displaymath}
\gamma_h(c_e) = 
\sum_{i=1}^{n_h} u_h^{(i)} c_e v_{h^{-1}}^{(i)} =
\sum_{i=1}^{n_h} c_h u_h^{(i)} v_{h^{-1}}^{(i)} = 
c_h \epsilon_h = \epsilon_h c_h
= \epsilon_h m(d_h) = m(d_h) = c_h.
\end{displaymath}
Using that $d_h = 0$, for all but finitely many $h \in G$,
the same holds for $c_h$ and hence also for $\gamma_h(c_e)$.
By summing over $h \in G$, we get that
\begin{displaymath}
{\rm tr}_{\gamma}(c_e) = 
\sum_{h \in G} \gamma_h(c_e) = 
\sum_{h \in G} c_h = 
\sum_{h \in G} m(d_h) =
m(y) = 1.
\end{displaymath}
Moreover, by Proposition~\ref{composition}
it follows that for all but finitely many
$h \in G$, the relation 
$c_e \epsilon_{h^{-1}} = \gamma_{h^{-1}} (\gamma_h (c_e)) = 0$ holds.
Thus, $c_e \in Z(R)_{\rm fin}$. 
\qed

\begin{defi}
Let $Z(R)_{\rm fin}^{\gamma}$ denote the set of 
$r \in Z(R)_{\rm fin}$ with the property that
for every $g \in G$, the relation 
$\gamma_g( r ) = r \epsilon_g$ holds.
\end{defi}

\begin{lem}\label{center}
With the above notation, the following assertions hold:
\begin{itemize}

\item[(a)] The set $Z(R)_{\rm fin}$ is a 
(possibly non-unital) subring of $Z(R)$.
$1 \in Z(R)_{\rm fin}$ 
if and only if for all but finitely many $g \in G$, $\epsilon_g = 0$.

\item[(b)] The set $Z(R)_{\rm fin}^{\gamma}$
is a (possibly non-unital) subring of $Z(R)_{\rm fin}$.
$1 \in Z(R)_{\rm fin}^{\gamma}$ 
if and only if for all but finitely many $g \in G$, $\epsilon_g = 0$.

\item[(c)] ${\rm tr}_{\gamma} (Z(R)_{\rm fin}) \subseteq Z(R)_{\rm fin}^\gamma$
and the function 
${\rm tr}_{\gamma} : Z(R)_{\rm fin} \rightarrow Z(R)_{\rm fin}^\gamma$
is a $Z(R)_{\rm fin}^\gamma$-bimodule homomorphism.

\item[(d)] Suppose that $\epsilon_g = 0$, for all 
but finitely many $g \in G$. If $r \in Z(R)_{\rm fin}^{\gamma}$
and $r$ is invertible in $Z(R)$, then $r^{-1} \in Z(R)_{\rm fin}^{\gamma}$.
\end{itemize}
\end{lem}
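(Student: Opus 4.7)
The plan is to treat parts (a)--(d) in order. Throughout, write $\Supp(r) = \{ g \in G : r\epsilon_g \neq 0 \}$, so that $Z(R)_{\rm fin} = \{ r \in Z(R) : |\Supp(r)| < \infty \}$.

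For (a), the containments $\Supp(r+s) \subseteq \Supp(r) \cup \Supp(s)$ and $\Supp(rs) \subseteq \Supp(s)$ follow directly from $(r+s)\epsilon_g = r\epsilon_g + s\epsilon_g$ and $(rs)\epsilon_g = r \cdot s\epsilon_g$, and $\Supp(-r) = \Supp(r)$; thus $Z(R)_{\rm fin}$ is a subring of $Z(R)$. The criterion for $1 \in Z(R)_{\rm fin}$ is just the tautology $1 \cdot \epsilon_g = \epsilon_g$. For (b), closure under addition uses additivity of each $\gamma_g$, while closure under multiplication uses Proposition~\ref{restriction}: for $r,s \in Z(R)_{\rm fin}^{\gamma}$ one has $\gamma_g(rs) = \gamma_g(r)\gamma_g(s) = (r\epsilon_g)(s\epsilon_g) = rs\epsilon_g$. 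Proposition~\ref{restriction} also gives $\gamma_g(1) = \epsilon_g = 1 \cdot \epsilon_g$, so the $\gamma$-invariance condition at $r=1$ is automatic, whence $1 \in Z(R)_{\rm fin}^{\gamma}$ iff $1 \in Z(R)_{\rm fin}$.

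For (c), I would first check $\gamma$-invariance of ${\rm tr}_{\gamma}(r)$ by reindexing via Proposition~\ref{composition}:
\[
\gamma_g\bigl({\rm tr}_{\gamma}(r)\bigr) = \sum_{h \in G} \gamma_{gh}(r)\epsilon_g = \Bigl(\sum_{k \in G} \gamma_k(r)\Bigr)\epsilon_g = {\rm tr}_{\gamma}(r)\epsilon_g.
\]
The essential step is to bound $\Supp({\rm tr}_{\gamma}(r))$. I would first observe that $\gamma_h(r) = \gamma_h(r\epsilon_{h^{-1}})$ (since $u_h^{(i)} = u_h^{(i)}\epsilon_{h^{-1}}$), whence $\gamma_h(r) = 0$ for $h \notin \Supp(r)^{-1}$. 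Next I would prove the stronger auxiliary identity
\[
\gamma_h(r)\epsilon_g = \gamma_h(r\epsilon_{h^{-1}g}), \qquad r \in Z(R),\ g,h \in G,
\]
by writing $\gamma_h(r)\epsilon_g = \gamma_h(r)\epsilon_h\epsilon_g$, recognizing via Proposition~\ref{composition} that $\epsilon_h\epsilon_g = \gamma_h(\gamma_{h^{-1}}(\epsilon_g))$ and $\gamma_{h^{-1}}(\epsilon_g) = \gamma_{h^{-1}}(\gamma_g(1)) = \epsilon_{h^{-1}g}\epsilon_{h^{-1}}$, and then applying the ring-homomorphism property of $\gamma_h$ together with $\gamma_h(\epsilon_{h^{-1}}) = \epsilon_h$. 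This yields $\Supp(\gamma_h(r)) \subseteq h\Supp(r)$, so the finite sum ${\rm tr}_{\gamma}(r) = \sum_{h \in \Supp(r)^{-1}} \gamma_h(r)$ has support in the finite set $\Supp(r)^{-1}\Supp(r)$. Finally, the bimodule identity follows from the ring-hom property of each $\gamma_g$ and $\epsilon_g\gamma_g(r) = \gamma_g(r)$:
\[
{\rm tr}_{\gamma}(sr) = \sum_{g} \gamma_g(s)\gamma_g(r) = \sum_g s\epsilon_g\gamma_g(r) = s\sum_g \gamma_g(r) = s\cdot{\rm tr}_{\gamma}(r)
\]
for $s \in Z(R)_{\rm fin}^{\gamma}$ and $r \in Z(R)_{\rm fin}$, and analogously on the right.

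For (d), the hypothesis that $\epsilon_g = 0$ for all but finitely many $g$ collapses $Z(R)_{\rm fin}$ to $Z(R)$, so $r^{-1} \in Z(R)_{\rm fin}$ is free and only $\gamma$-invariance needs checking. Applying the ring homomorphism $\gamma_g$ to $r \cdot r^{-1} = 1$ gives $(r\epsilon_g)\gamma_g(r^{-1}) = \gamma_g(1) = \epsilon_g$; since $\gamma_g(r^{-1}) \in Z(R)\epsilon_g$, multiplying through by $r^{-1}$ in $Z(R)$ yields $\gamma_g(r^{-1}) = r^{-1}\epsilon_g$, as desired. The main obstacle is the support bound in (c); without the identity $\gamma_h(r)\epsilon_g = \gamma_h(r\epsilon_{h^{-1}g})$, the $\gamma$-invariance formula $\gamma_g({\rm tr}_{\gamma}(r)) = {\rm tr}_{\gamma}(r)\epsilon_g$ is circular and does not by itself place ${\rm tr}_{\gamma}(r)$ in $Z(R)_{\rm fin}$.
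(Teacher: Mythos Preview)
Your argument is correct and, for parts (a), (b) and (d), essentially identical to the paper's: the same support-set containments in (a), the same appeal to multiplicativity of $\gamma_g$ from Proposition~\ref{restriction} in (b), and the same cancellation from $\gamma_g(r)\gamma_g(r^{-1})=\epsilon_g$ in (d).

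In (c) you and the paper carry out the same reindexing computation $\gamma_g({\rm tr}_\gamma(r))=\sum_h\gamma_{gh}(r)\,\epsilon_g={\rm tr}_\gamma(r)\,\epsilon_g$ via Proposition~\ref{composition}, and the same bimodule verification via multiplicativity of each $\gamma_g$. The one substantive difference is that you also supply an explicit proof that ${\rm tr}_\gamma(r)\in Z(R)_{\rm fin}$, through the auxiliary identity $\gamma_h(r)\epsilon_g=\gamma_h(r\epsilon_{h^{-1}g})$ (equivalently $\epsilon_h\epsilon_g=\gamma_h(\epsilon_{h^{-1}g})$, an immediate consequence of Proposition~\ref{composition}) and the resulting bound $\Supp({\rm tr}_\gamma(r))\subseteq\Supp(r)^{-1}\Supp(r)$. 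The paper, by contrast, writes ``Therefore, ${\rm tr}_{\gamma}(r)\in Z(R)_{\rm fin}^\gamma$'' directly after the invariance computation, without separately checking membership in $Z(R)_{\rm fin}$. Your remark that the invariance formula alone does not by itself force ${\rm tr}_\gamma(r)$ into $Z(R)_{\rm fin}$ is well taken; on this point your write-up is strictly more complete than the one in the paper.
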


\begin{proof}
(a): Take $r,r' \in Z(R)_{\rm fin}$.
Let $X(r)$ denote the finite set $\{ g \in G \mid r \epsilon_g \neq 0 \}$.
Then
$X(rr') \subseteq X(r')$
which is a finite set.
Also $X(r + r') \subseteq X(r) \cup X(r')$ which is a finite set.
Therefore $Z(R)_{\rm fin}$ is a subring of $Z(R)$.
Also $1 \in Z(R)_{\rm fin}$ precisely when 
$\epsilon_g = 1 \epsilon_g = 0$ for all but finitely many $g \in G$.

(b): Take $r,r' \in Z(R)_{\rm fin}^{\gamma}$.
From (a) we know that $rr',r+r' \in Z(R)_{\rm fin}$.
Take $g \in G$. From Proposition~\ref{restriction},
it follows that $\gamma_g(rr') = \gamma_g(r) \gamma_g(r') = 
r \epsilon_g r' \epsilon_g = rr' \epsilon_g$.
Also $\gamma_g(r + r') = \gamma_g(r) + \gamma_g(r') = 
r \epsilon_g + r' \epsilon_g = (r+r') \epsilon_g$.
Therefore $rr',r+r' \in Z(R)_{\rm fin}^{\gamma}$.
Since $\gamma_g(1) = \epsilon_g$, the last part
of (b) follows from (a).

(c): Take $r\in Z(R)_{\rm fin}$ and $g\in G$. 
By Proposition~\ref{composition},
we get that 
$\gamma_g( {\rm tr}_{\gamma}(r) ) = 
\sum_{h \in G} \gamma_g( \gamma_h(r) ) = 
\sum_{h \in G} \gamma_{gh}(r) \epsilon_{g} = 
{\rm tr}_{\gamma}(r) \epsilon_g.$
Therefore, ${\rm tr}_{\gamma}(r)\in Z(R)_{\rm fin}^\gamma.$ 
To prove the last statement, take $r \in Z(R)_{\rm fin}$  
and $r',r'' \in  Z(R)_{\rm fin}^{\gamma}.$ 
Then Proposition~\ref{restriction} implies that
\begin{align*}
{\rm tr}_{\gamma}(r' r r'') &=
\sum_{g \in G} \gamma_g (r' r r'') = 
\sum_{g \in G}\gamma_g(r') \gamma_g(r) \gamma_g(r'') =
\sum_{g \in G} r' \epsilon_g \gamma_g(r) r'' \epsilon_g  \\
&= \sum_{g \in G} r' \epsilon_g \gamma_g(r) \epsilon_g r'' =
\sum_{g \in G} r' \gamma_g(r) r'' = r' {\rm tr}_{\gamma}(r) r''.
\end{align*}

(d): From the relation $r r^{-1} = 1$, we get that
$\gamma_g(r) \gamma_g(r^{-1}) = \epsilon_g$.
Since $r \in Z(R)_{\rm fin}^{\gamma}$, we get that
$r \epsilon_g \gamma_g( r^{-1} ) = \epsilon_g$.
Thus, $r^{-1} r \gamma_g( r^{-1} ) = r^{-1} \epsilon_g$.
Hence, $\gamma_g( r^{-1} ) = r^{-1} \epsilon_g$.
\end{proof}

\begin{cor}\label{separablecorollary}
Suppose that $\ep_g=0,$ for all but finitely many $g\in G.$ 
If ${\rm tr}_{\gamma}(1)$ is invertible in $R$, then $S/R$ is separable.
\end{cor}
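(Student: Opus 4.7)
The plan is to apply Theorem~\ref{maintheorem}, so the goal reduces to exhibiting an element $c \in Z(R)_{\rm fin}$ with ${\rm tr}_{\gamma}(c) = 1$. The natural candidate is $c = t^{-1}$, where $t := {\rm tr}_{\gamma}(1)$, and the work consists of checking that $t^{-1}$ lives in the appropriate subring so that the bimodule property of ${\rm tr}_{\gamma}$ from Lemma~\ref{center}(c) can be invoked.

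First, I would observe that the standing assumption ``$\epsilon_g = 0$ for all but finitely many $g \in G$'' gives, by Lemma~\ref{center}(a) and (b), that $1 \in Z(R)_{\rm fin}^{\gamma} \subseteq Z(R)_{\rm fin}$. In particular, $t = {\rm tr}_{\gamma}(1)$ is defined, and by Lemma~\ref{center}(c), $t \in Z(R)_{\rm fin}^{\gamma}$.

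Next, by hypothesis $t$ is invertible in $R$, and since $t \in Z(R)$ its inverse automatically lies in $Z(R)$. Applying Lemma~\ref{center}(d) then places $t^{-1}$ in $Z(R)_{\rm fin}^{\gamma}$, hence in $Z(R)_{\rm fin}$. Now I can invoke the bimodule property of ${\rm tr}_{\gamma}$ established in Lemma~\ref{center}(c): taking $r'=t^{-1} \in Z(R)_{\rm fin}^{\gamma}$, $r = 1 \in Z(R)_{\rm fin}$ and $r'' = 1 \in Z(R)_{\rm fin}^{\gamma}$, we get
\begin{displaymath}
{\rm tr}_{\gamma}(t^{-1}) = {\rm tr}_{\gamma}(t^{-1} \cdot 1 \cdot 1) = t^{-1} \, {\rm tr}_{\gamma}(1) \cdot 1 = t^{-1} t = 1.
\end{displaymath}
Thus $1 \in {\rm tr}_{\gamma}(Z(R)_{\rm fin})$, and Theorem~\ref{maintheorem} concludes that $S/R$ is separable.

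The only mildly subtle point, which is the closest thing to an obstacle, is to recognize why one is allowed to write ${\rm tr}_{\gamma}(t^{-1})$ as $t^{-1} \cdot t$ at all: this requires $t^{-1}$ to belong to the smaller subring $Z(R)_{\rm fin}^{\gamma}$, not merely to $Z(R)_{\rm fin}$. The finiteness hypothesis on the $\epsilon_g$'s is precisely what makes $1$ (and therefore $t^{-1} = t^{-1} \cdot 1$) fall into $Z(R)_{\rm fin}^{\gamma}$ via Lemma~\ref{center}(b) and (d); without it, the bimodule identity of Lemma~\ref{center}(c) would not apply.
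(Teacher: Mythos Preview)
Your proof is correct and follows essentially the same approach as the paper: use Lemma~\ref{center}(d) to place $t^{-1}$ in $Z(R)_{\rm fin}^{\gamma}$, then the bimodule property of Lemma~\ref{center}(c) to get ${\rm tr}_{\gamma}(t^{-1}) = t^{-1}\,{\rm tr}_{\gamma}(1) = 1$, and conclude via Theorem~\ref{maintheorem}. Your version is in fact slightly more explicit than the paper's, in that you justify why $t^{-1}$ lies in $Z(R)$ (so that Lemma~\ref{center}(d) applies) and why $1 \in Z(R)_{\rm fin}$ to begin with.
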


\proof 
From Lemma~\ref{center}(d), we get 
that ${\rm tr}_{\gamma}(1)^{-1} \in Z(R)_{\rm fin}^\gamma$. 
Thus, by Lemma~\ref{center}(c), it follows that 
${\rm tr}_{\gamma}({\rm tr}_{\gamma}(1)\m 1) = 
{\rm tr}_{\gamma}(1)\m {\rm tr}_{\gamma}(1) = 1$.
Hence, $S/R$ is separable due to Theorem~\ref{maintheorem}.
\endproof

\begin{rem}
The sufficient condition concerning invertibility of ${\rm tr}_{\gamma}(1)$
in Corollary~\ref{separablecorollary} is not necessary for separability
(see Proposition~\ref{specialcase}).
\end{rem}

\section {Semisimplicity, Hereditarity and Frobenius Properties}\label{sectionsemisimplicity}

In this section, we use Theorem~\ref{maintheorem}
to find criteria for when epsilon-strongly graded rings
are semisimple, hereditary and Frobenius
(see Theorem~\ref{theoremhereditary} and
Theorem~\ref{theoremfrobenius}).
For the rest of this section, $S/R$ denotes a ring extension.
Let ${\rm res}$ denote the restriction 
functor $S$-${\rm mod} \rightarrow R$-${\rm mod}$.
The following two results are quite well-known, but, for the 
convenience of the reader, we have chosen to include
the proofs.

\begin{prop}\label{wellknown}
Let $S/R$ be separable
and let $M$ be a left (right) $S$-module.
If ${\rm res}(M)$ is left (right) projective, 
then $M$ is left (right) projective.
\end{prop}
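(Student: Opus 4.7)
The plan is to use the separability element to lift an $R$-linear splitting to an $S$-linear one. I will do the left-module case; the right-module case is symmetric. Let $\pi : N \twoheadrightarrow M$ be any surjection of left $S$-modules. Since ${\rm res}(M)$ is $R$-projective, there exists a left $R$-linear splitting $\sigma : M \rightarrow N$, i.e.\ $\pi \circ \sigma = {\rm id}_M$. My goal is to modify $\sigma$ to produce a left $S$-linear splitting; projectivity of $M$ as an $S$-module then follows by the usual characterisation.

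First I would fix a separability element $x = \sum_{i=1}^n a_i \otimes b_i \in S \otimes_R S$ satisfying $m(x) = \sum_{i=1}^n a_i b_i = 1$ and $sx = xs$ for every $s \in S$. The next step is to define a candidate splitting by $\tilde\sigma(m) = \sum_{i=1}^n a_i \sigma(b_i m)$ for $m \in M$. A cleaner way to see that this is well-defined (independent of the chosen representative of $x$) is to introduce the additive map
\begin{displaymath}
\phi : S \otimes_R S \rightarrow {\rm Hom}_{\mathbb{Z}}(M,N), \qquad \phi(s \otimes t)(m) = s \, \sigma(tm),
\end{displaymath}
which is balanced over $R$ precisely because $\sigma$ is $R$-linear: $\phi(sr \otimes t)(m) = sr\sigma(tm) = s\sigma(rtm) = \phi(s \otimes rt)(m)$. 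Then $\tilde\sigma(m) := \phi(x)(m)$ depends only on $x$ itself.

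Next I would verify $S$-linearity. For $s \in S$ and $m \in M$ one has $s \tilde\sigma(m) = \phi(sx)(m)$, where $sx$ means multiplication on the left tensor factor, and $\tilde\sigma(sm) = \phi(xs)(m)$, where $xs$ means multiplication on the right tensor factor. The commutation relation $sx = xs$ then yields $s\tilde\sigma(m) = \tilde\sigma(sm)$. Finally, the relation $m(x) = 1$ combined with $\pi\sigma = {\rm id}_M$ gives
\begin{displaymath}
\pi(\tilde\sigma(m)) = \sum_{i=1}^n a_i \, \pi(\sigma(b_i m)) = \sum_{i=1}^n a_i b_i m = m,
\end{displaymath}
so $\pi$ splits as $S$-modules and $M$ is $S$-projective.

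The only delicate point, and the one I would emphasize in the write-up, is that the formula defining $\tilde\sigma$ uses a particular presentation $\sum a_i \otimes b_i$ of $x$; the factorisation through $\phi$ above is what guarantees both well-definedness and the clean translation of the identity $sx = xs$ into the required $S$-linearity. Everything else is a routine check.
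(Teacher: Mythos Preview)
Your proof is correct and uses the standard Maschke-type averaging trick: given an $R$-linear splitting of a surjection, conjugate by the separability element to obtain an $S$-linear one. The paper takes a slightly different but equivalent route, working with the dual-basis characterisation of projectivity instead of the lifting characterisation. Concretely, it fixes a dual $R$-basis $\{m_i,f_i\}_{i\in I}$ for $M$ and then defines $F_i(m)=\sum_j s_j f_i(t_j m)$, checking that $\{m_i,F_i\}$ is a dual $S$-basis. The underlying idea is the same---use the separability element to promote an $R$-linear gadget to an $S$-linear one---and the verification that $F_i$ is $S$-linear in the paper is essentially the same computation you package via $\phi$ and the identity $sx=xs$. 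Your framing through the balanced map $\phi:S\otimes_R S\to{\rm Hom}_{\mathbb{Z}}(M,N)$ is a clean way to handle well-definedness and linearity simultaneously; the paper's version is a bit more hands-on but yields an explicit dual basis rather than just existence of a splitting.
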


\begin{proof}
We only show the ``left'' part of the proof.
The ``right'' part is shown in an analogous way
and is therefore omitted.
Take $n \in \mathbb{N}$ and $s_j,t_j \in S$, for $j \in \{1,\ldots,n\}$, such that
$x = \sum_{j=1}^n s_j \otimes t_j$ is a separability element of $S \otimes_R S$.
Since ${\rm res}(M)$ is projective, $M$ has a dual $R$-basis 
$\{ m_i \}_{i \in I}$ and $\{ f_i \}_{i \in I}$.
For each $i \in I$, define $F_i : M \rightarrow S$
by $F_i(m) = \sum_{j=1}^n s_j f_i(t_j m)$, for $m \in M$.
We wish to show that $\{ F_i \}_{i \in I}$ 
and $\{ m_i \}_{i \in I}$ is a dual $S$-basis for $M$.
First of all, clearly, each $F_i$ is additive.
Next, for each $m \in M$, 
$\sum_{i \in I} F_i(m) m_i = 
\sum_{j=1}^n s_j \sum_{i \in I} f_i(t_j m) m_i =
\sum_{j=1}^n s_j t_j m = 1m = m$.
Finally, take $i \in I$, $s \in S$ and $m \in M$. 
We need to show that $F_i(sm) = sF_i(m)$, or, in other words, that
$\sum_{j=1}^n s_j f_i(t_j s m) = \sum_{j=1}^n s s_j f(t_j m)$. 
To see this, first notice that $xs = sx$ implies that 
$\sum_{j=1}^n s_j \otimes t_j s = \sum_{j=1}^n s s_j \otimes t_j$.
From the left $S \otimes_R S$-module structure on $S \otimes_R M$
it follows that 
$\sum_{j=1}^n s_j \otimes t_j s m = \sum_{j=1}^n s s_j \otimes t_j m$.
By applying the function 
$S \otimes_R M \ni a \otimes b \mapsto a \otimes f_i(b) \in S \otimes_R M$
to the last equality, we get that
$\sum_{j=1}^n s_j \otimes f_i(t_j s m) = \sum_{j=1}^n s s_j \otimes f_i(t_j m)$.
Finally, by applying the function
$S \otimes_R M \ni a \otimes b \mapsto ab \in M$ to 
the last equality, we get that
$\sum_{j=1}^n s_j f_i(t_j s m) = \sum_{j=1}^n s s_j f_i(t_j m)$.
\end{proof}

\begin{lem}\label{lemmaprojective}
$S$ is projective as a left $R$-module, if and only if,
the functor ${\rm res}$ preserves projectives.
\end{lem}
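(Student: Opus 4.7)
The plan is to prove the two implications separately, both by direct arguments using only basic homological properties and the fact that $\text{res}$ commutes with arbitrary direct sums.

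For the ``if'' direction, the strategy is to apply the hypothesis to the canonical projective $S$-module. Since $S$ is free, and in particular projective, as a left $S$-module, the assumption that $\text{res}$ preserves projectives immediately yields that $\text{res}(S) = S$ is projective as a left $R$-module. This direction is essentially a one-line observation.

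For the ``only if'' direction, I would take a projective left $S$-module $P$ and express it as a direct summand of a free left $S$-module $S^{(I)}$ for some index set $I$. Applying $\text{res}$, which is additive and preserves arbitrary direct sums, one obtains that $\text{res}(P)$ is a direct summand of $\text{res}(S^{(I)}) = \text{res}(S)^{(I)}$. Under the hypothesis that $S$ is projective as a left $R$-module, the module $\text{res}(S)^{(I)}$ is a direct sum of projectives and hence projective, so any direct summand of it — in particular $\text{res}(P)$ — is projective as a left $R$-module.

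The hardest part, if one wants to be precise, is simply noting that $\text{res}$ commutes with arbitrary direct sums (as a forgetful functor between module categories it has a left adjoint $S\otimes_R -$, but more elementarily the underlying abelian group structure and the $R$-action are preserved by direct sums), so free $S$-modules are sent to direct sums of copies of $\text{res}(S)$. Beyond that, the argument is a standard manipulation of projectives as direct summands of frees, and requires no appeal to the epsilon-strong grading hypothesis.
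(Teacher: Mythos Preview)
Your proposal is correct. The ``if'' direction coincides exactly with the paper's (the paper simply writes ``The converse is clear''). For the ``only if'' direction you and the paper take slightly different but equally standard routes: the paper works with the dual-basis characterization of projectivity, taking a dual $S$-basis $\{m_i,f_i\}$ for $M$ and a dual $R$-basis $\{s_j,g_j\}$ for $S$ and checking directly that $\{s_j m_i,\ g_j\circ f_i\}$ is a dual $R$-basis for $M$; you instead use the direct-summand-of-free characterization together with the observation that $\mathrm{res}$ preserves arbitrary direct sums. Your argument is a touch more categorical and avoids any index bookkeeping; the paper's is more explicit. Neither approach needs anything beyond elementary module theory, and neither uses the epsilon-strong grading hypothesis, as you note.
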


\begin{proof}
Suppose that $M$ is a projective left $S$-module.
We wish to show that $M$, considered as a left $R$-module, is projective. 
Take a dual $S$-basis $\{m_i,f_i\}_{i\in I}$ for $M$
and a dual $R$-basis $\{s_j, g_j\}_{j\in J}$ for $S$.  
Then $\{s_j m_i, g_j \circ f_i\}_{(i,j)\in I\times J}$ 
is a dual basis of $M$ as a left $R$-module. 
Indeed, for $m\in M$ we have that 
\begin{displaymath}
\sum_{(i,j)\in I\times J}(g_j \circ f_i)(m) (s_j m_i) =
\sum_i \left( \sum_j g_j ( f_i(m)) s_j \right) m_i = 
\sum_if_i(m)m_i = m.
\end{displaymath}
The converse is clear.
\end{proof}

Recall that $R$ is called left (right) semisimple if all
left (right) $R$-modules are semisimple. 
Notice that since $R$ is left semisimple if and only if $R$ is right semisimple, 
(see \cite[Corollary (3.7)]{lam1991})
the left/right distinction is therefore unnecessary.
Recall that $R$ is called left (right) hereditary
if all submodules of left (right) projective modules over $R$
are again projective. 

\begin{cor}\label{corsep}
Let $S/R$ be separable.
If $R$ is semisimple (left/right here\-ditary and 
$S$ is projective as a left/right $R$-module),
then $S$ is semisimple (left/right here\-ditary).
\end{cor}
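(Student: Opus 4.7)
The plan is to derive both assertions directly from Proposition~\ref{wellknown} and Lemma~\ref{lemmaprojective}, treating the semisimple and hereditary cases in parallel. In both situations, I will start with a left (respectively right) $S$-module $M$ whose projectivity over $S$ I want to establish, transfer the question to $R$ via restriction, exploit the hypothesis on $R$ to conclude that the restricted module is $R$-projective, and then appeal to Proposition~\ref{wellknown} to lift projectivity back up to $S$.

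For the semisimple case, the argument is essentially automatic: since $R$ is semisimple, every $R$-module is projective, so in particular $\operatorname{res}(M)$ is projective for any $S$-module $M$. Proposition~\ref{wellknown} then gives that $M$ is $S$-projective. As this applies to all $S$-modules, every $S$-module is projective, and hence $S$ is semisimple. Note that no additional assumption on $S$ as an $R$-module is needed here because semisimplicity of $R$ makes the projectivity of $\operatorname{res}(M)$ free.

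For the hereditary case, I would take a projective left $S$-module $P$ and a submodule $M \subseteq P$, aiming to show that $M$ is $S$-projective. The hypothesis that $S$ is projective as a left $R$-module, together with Lemma~\ref{lemmaprojective}, ensures that $\operatorname{res}$ carries projectives to projectives, so $\operatorname{res}(P)$ is a projective left $R$-module. Since $R$ is left hereditary, the submodule $\operatorname{res}(M) \subseteq \operatorname{res}(P)$ is then projective over $R$. Finally, Proposition~\ref{wellknown} upgrades this to projectivity of $M$ as a left $S$-module. The right-handed version is entirely analogous.

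There is no real obstacle here; the corollary is essentially a formal consequence of the two preceding results, and the only thing to be careful about is to note (in the semisimple half) that the projectivity hypothesis on $S$ over $R$ is not required, since it is automatically satisfied when $R$ is semisimple.
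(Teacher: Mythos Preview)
Your argument is correct and follows exactly the same route as the paper: both reduce to Proposition~\ref{wellknown} and Lemma~\ref{lemmaprojective} together with the characterization of semisimplicity (respectively hereditarity) in terms of projectivity of all modules (respectively all submodules of projectives). The paper states this in one sentence; you have simply unpacked the details.
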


\begin{proof}
This follows from Proposition~\ref{wellknown}, Lemma~\ref{lemmaprojective} 
and the fact that a ring is semisimple (hereditary) if and only if
every module (or submodule of a projective module)
over the ring is projective (see \cite[Theorem (2.8)]{lam1991}).
\end{proof}

It is easy to see that if $S$ is a ring graded by
a group $G$ and we put $R = S_e$, then semisimplicity
(left/right hereditarity) of $R$ is always necessary
for $S$ to be semisimple (left/right hereditary).
In fact, this is true even in the more general setting 
of rings graded by categories (see \cite[Proposition 3]{L06}).
Now we determine sufficient conditions for 
semisimplicity (left/right hereditarity).

\begin{thm}\label{theoremhereditary}
Let $S$ be epsilon-strongly graded by
a group $G$ and put $R = S_e$.
Suppose that $R$ is semisimple (hereditary). 
If $1 \in {\rm tr}_{\gamma}(Z(R)_{\rm fin})$
(and every $S_g$, for $g \in G$, is projective as a
left/right $R$-module),
then $S$ is semisimple (left/right hereditary).
In particular, if $\epsilon_g = 0$ for all but finitely many $g\in G$ and 
${\rm tr}_{\gamma}(1)$ is invertible in $R$ 
(and every $S_g$, for $g \in G$, is projective as a left/right $R$-module),
then $S$ is semisimple (left/right hereditary). 
\end{thm}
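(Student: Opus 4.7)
The plan is to extract separability of $S/R$ from the hypothesis $1 \in {\rm tr}_{\gamma}(Z(R)_{\rm fin})$ using Theorem~\ref{maintheorem}, and then to feed this into Corollary~\ref{corsep} to obtain semisimplicity or hereditarity of $S$. So the first step is simply: invoke Theorem~\ref{maintheorem} to conclude that $S/R$ is separable.

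For the semisimple case, once $S/R$ is separable and $R$ is semisimple, Corollary~\ref{corsep} immediately yields that $S$ is semisimple, with no further hypothesis needed. For the left (resp.\ right) hereditary case, Corollary~\ref{corsep} requires, in addition to hereditarity of $R$, that $S$ be projective as a left (resp.\ right) $R$-module. This is where I use the grading: as a left (resp.\ right) $R$-module we have the decomposition $S = \bigoplus_{g \in G} S_g$, and since a direct sum of projective modules is projective, the standing assumption that every $S_g$ is projective as a left (resp.\ right) $R$-module yields projectivity of $S$ over $R$. Corollary~\ref{corsep} then applies and gives that $S$ is left (resp.\ right) hereditary.

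For the ``in particular'' statement, the two hypotheses that $\epsilon_g = 0$ for all but finitely many $g \in G$ and that ${\rm tr}_{\gamma}(1)$ is invertible in $R$ are precisely the hypotheses of Corollary~\ref{separablecorollary}, which directly delivers separability of $S/R$. The rest of the argument then proceeds exactly as in the first part: Corollary~\ref{corsep} combined with the projectivity of each $S_g$ (when needed for the hereditary case) gives the conclusion.

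I do not anticipate a genuine obstacle here; the statement is essentially a book-keeping consequence of Theorem~\ref{maintheorem}, Corollary~\ref{separablecorollary} and Corollary~\ref{corsep}. The only point that deserves a sentence of justification is the passage from projectivity of each homogeneous component $S_g$ to projectivity of $S$ itself as a one-sided $R$-module, which is immediate from the direct-sum decomposition supplied by the grading.
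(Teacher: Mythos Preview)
Your proposal is correct and follows exactly the same route as the paper: invoke Theorem~\ref{maintheorem} (or Corollary~\ref{separablecorollary} for the ``in particular'' part) to obtain separability, then apply Corollary~\ref{corsep}, using the direct-sum decomposition $S = \bigoplus_{g \in G} S_g$ to pass from projectivity of each $S_g$ to projectivity of $S$ in the hereditary case. The paper's proof is a one-line citation of precisely these ingredients.
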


\begin{proof}
This follows from Theorem~\ref{maintheorem},
Corollary~\ref{separablecorollary}, Corollary~\ref{corsep}
and the fact that a direct sum of projective modules is projective.
\end{proof}

Recall that $S/R$ is called a {\it Frobenius extension} 
if there is a finite set $J$,
$x_j,y_j \in S$, for $j \in J$, and an
$R$-bimodule map $E : S \rightarrow R$ such that,
for every $s \in S$, the equalities
$s = \sum_{j \in J} x_j E(y_j s) = 
\sum_{j \in J}^n E(s x_j)y_j$ hold.
In that case, $(E,x_j,y_j)$ is called a
{\it Frobenius system}.

\begin{thm}\label{theoremfrobenius}
If $S$ is epsilon-strongly graded by a finite group $G$
and we put $R = S_e$, then $S/R$ is a Frobenius extension.
\end{thm}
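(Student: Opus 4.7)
The plan is to construct an explicit Frobenius system $(E, \{x_j\}, \{y_j\})$ using the homogeneous projection together with the distinguished elements $u_g^{(i)}, v_{g^{-1}}^{(i)}$ supplied by Definition~\ref{definitiongamma}. First I would take $E : S \to R$ to be the projection onto the principal component, i.e.\ $E\bigl(\sum_g s_g\bigr) = s_e$. Since each $S_g$ is an $R$-bimodule, $E$ is manifestly an $R$-bimodule map, so the only real work is to produce a finite ``dual basis'' certifying the Frobenius equalities.

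For the dual basis, I would use the finite index set $J = \{(g,i) : g \in G,\ 1 \le i \le n_g\}$, which is finite precisely because $G$ is finite. Define $x_{(g,i)} = u_g^{(i)}$ and $y_{(g,i)} = v_{g^{-1}}^{(i)}$, where these are the elements fixed in Definition~\ref{definitiongamma} satisfying $\sum_{i=1}^{n_g} u_g^{(i)} v_{g^{-1}}^{(i)} = \epsilon_g$. To verify the first Frobenius identity, take homogeneous $s \in S_h$. Then $v_{g^{-1}}^{(i)} s \in S_{g^{-1}h}$, so $E(y_{(g,i)} s)$ vanishes unless $g = h$, in which case it equals $v_{h^{-1}}^{(i)} s$ itself. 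Hence
\begin{displaymath}
\sum_{(g,i) \in J} x_{(g,i)} E(y_{(g,i)} s) = \sum_{i=1}^{n_h} u_h^{(i)} v_{h^{-1}}^{(i)} s = \epsilon_h s = s,
\end{displaymath}
where the final equality is Proposition~\ref{epsilon1}(iii). A symmetric calculation handles the right-hand version: for $s \in S_h$, $s u_g^{(i)} \in S_{hg}$ is killed by $E$ unless $g = h^{-1}$, so
\begin{displaymath}
\sum_{(g,i) \in J} E(s x_{(g,i)}) y_{(g,i)} = \sum_{i=1}^{n_{h^{-1}}} s u_{h^{-1}}^{(i)} v_{h}^{(i)} = s \epsilon_{h^{-1}} = s,
\end{displaymath}
again by Proposition~\ref{epsilon1}(iii). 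Both identities then extend by additivity from the homogeneous case to arbitrary $s \in S$.

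There is essentially no hard step: finiteness of $G$ is used only to ensure $|J| < \infty$, the bimodule property of $E$ is automatic from the grading, and the two dual-basis equations reduce in each homogeneous degree to the defining property $\epsilon_g s = s = s\epsilon_{g^{-1}}$ of the local units. The only point that deserves care is verifying that the correct $\epsilon$ appears on the correct side in each of the two identities, which is why one takes $x_{(g,i)} = u_g^{(i)}$ and $y_{(g,i)} = v_{g^{-1}}^{(i)}$ (and not, say, the reverse): the left-hand identity forces $g = h$ and yields $\epsilon_h$, while the right-hand identity forces $g = h^{-1}$ and yields $\epsilon_{h^{-1}}$, both of which act as identities on $S_h$.
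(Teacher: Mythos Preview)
Your proof is correct and essentially identical to the paper's: the same Frobenius system $(E, u_g^{(i)}, v_{g^{-1}}^{(i)})$ indexed by $J=\{(g,i)\}$ is used, with $E$ the projection onto $S_e$, and the two Frobenius identities are verified via $\epsilon_g s_g = s_g = s_g \epsilon_{g^{-1}}$. The only cosmetic difference is that the paper computes directly for a general $s=\sum_g s_g$ rather than first reducing to homogeneous elements.
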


\begin{proof} 
Put $J = \{ (g , i) \mid g \in G, \ 1 \leq i \leq n_g \},$ where $n_g$ is given by Definition~\ref{definitiongamma}.
Since $G$ is finite, $J$ is finite.
For each $j = (g,i) \in J$, define
$x_j = u_g^{(i)}$ and $y_j = v_{g^{-1}}^{(i)}$.
Define $E : S \rightarrow R$ by 
$E (s) = s_e$, for $s \in S$.
Then, clearly, $E$ is an $R$-bimodule map.
Take $s \in S$. Then
\begin{align*}\sum_{j \in J} x_j E(y_j s)& =
\sum_{g \in G} \sum_{i=1}^{n_g} u_g^{(i)} E( v_{g^{-1}}^{(i)} s ) =
\sum_{g \in G} \sum_{i=1}^{n_g} u_g^{(i)} v_{g^{-1}}^{(i)} s_g
 =
\sum_{g \in G} \epsilon_g s_g = s\end{align*} and
\begin{align*}\sum_{j \in J}^n E(s x_j)y_j =
\sum_{g \in G} \sum_{i=1}^{n_g} E( s u_g^{(i)} ) v_{g^{-1}}^{(i)} = 
\sum_{g \in G} \sum_{i=1}^{n_g} s_{g^{-1}} u_g^{(i)} v_{g^{-1}}^{(i)} =
\sum_{g \in G} s_{g^{-1}} \epsilon_{g} 
= s.\end{align*}
\end{proof}

\begin{rem}
The conclusion of Theorem~\ref{theoremfrobenius} follows from 
Proposition~\ref{epsilon1}(iv).
Indeed, since $G$ is finite, it follows that 
$S$ is finitely generated and projective as a left $R$-module.
Using the notation used in Proposition~\ref{epsilon1}, put $n = \oplus_{g \in G} n_g$.
Then $n$ is an isomorphism of $R$-modules
$S_R \rightarrow {\rm Hom}_R( {}_R S , R )_R$.
Hence $S/R$ is a Frobenius extension, according to \cite[Theorem 1.2]{kadison1999}. 
\end{rem}

Recall that if $T$ is a non-empty subset of $S$, 
then $C_S(T)$ denotes the set of $s \in S$
such that for every $t \in T$, the relation $st=ts$ holds.

\begin{prop}\label{propkadison}
Let $S/R$ be a Frobenius extension with Frobenius system
$(E,x_j,y_j)$. Then $S/R$ is separable if and only if there is
$d \in C_S(R)$ such that $\sum_{j \in J} x_j d y_j = 1$.
\end{prop}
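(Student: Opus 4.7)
The plan is to handle the two implications separately, using the Frobenius system $(E, x_j, y_j)$ in each direction.

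For the implication from right to left, suppose there is $d \in C_S(R)$ with $\sum_{j \in J} x_j d y_j = 1$. I would set $x = \sum_{j \in J} x_j d \otimes y_j \in S \otimes_R S$. Then $m(x) = 1$ is immediate from the hypothesis. To verify $sx = xs$ for arbitrary $s \in S$, I would rewrite $s x_j = \sum_{k \in J} x_k E(y_k s x_j)$ using the first Frobenius identity, then use $E(y_k s x_j) \in R$ together with $d \in C_S(R)$ to commute $d$ past that scalar and move $E(y_k s x_j)$ across the tensor, and finally collapse $\sum_{j \in J} E(y_k s x_j) y_j$ to $y_k s$ using the second Frobenius identity; reassembling yields $sx = \sum_k x_k d \otimes y_k s = xs$.

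For the implication from left to right, suppose $e = \sum_i a_i \otimes b_i$ is a separability element, so that $m(e)=1$ and $se = es$ for all $s \in S$. The crucial step is to produce the element $d$. I would define
\[
d = \sum_i a_i E(b_i) \in S,
\]
a choice motivated by the standard Frobenius isomorphism $S \otimes_R S \cong {\rm End}(S_R)$, under which the $S$-central element $e$ corresponds to a left $S$-linear endomorphism of $S$ and hence to an element of $C_S(R)$. Introducing the auxiliary additive map $\mu : S \otimes_R S \rightarrow S$ defined by $\mu(a \otimes b) = aE(b)$ (well defined because $E$ is left $R$-linear), I would first apply $\mu$ to the relation $re = er$ for $r \in R$ to deduce $rd = dr$, so that $d \in C_S(R)$. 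Then I would apply $\mu$ to the relation $x_j e = e x_j$ for each $j \in J$ to obtain $x_j d = \sum_i a_i E(b_i x_j)$, and sum over $j$ to conclude
\[
\sum_{j \in J} x_j d y_j = \sum_i a_i \sum_{j \in J} E(b_i x_j) y_j = \sum_i a_i b_i = m(e) = 1,
\]
where the middle equality uses the second Frobenius identity $b_i = \sum_{j \in J} E(b_i x_j) y_j$.

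The main conceptual obstacle is discovering the correct formula for $d$; once that is in hand, both directions reduce to routine manipulations with the two Frobenius identities, the $R$-bimodule property of $E$, and the centrality properties of $d$ and $e$ respectively, with the auxiliary map $\mu$ serving as the bridge between tensors over $R$ and elements of the centralizer.
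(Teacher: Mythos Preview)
Your argument is correct in both directions: the candidate separability element $\sum_j x_j d \otimes y_j$ works exactly as you describe, and the map $\mu(a\otimes b)=aE(b)$ is the right tool to extract $d$ from a given separability element. Note, however, that the paper does not supply its own proof of this proposition at all---it simply cites \cite[Corollary~2.17]{kadison1999}. What you have written is essentially the standard proof one finds in Kadison's book, so there is nothing to compare beyond the fact that you have filled in what the paper outsources.
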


\begin{proof}
See \cite[Corollary 2.17]{kadison1999}.
\end{proof}

\begin{rem}
Suppose that $S$ is epsilon-strongly graded by a finite group $G$ and put $R = S_e$.
Using Theorem~\ref{theoremfrobenius} and Proposition~\ref{propkadison},
we can, in this case, prove Theorem~\ref{maintheorem} in a different way.
Indeed, using the above results, we can conclude that 
$S/R$ is separable if and only if there is $d \in C_S(R)$ such that 
$\sum_{g \in G} \gamma_g(d) = 
\sum_{g \in G} \sum_{i=1}^{n_g} u_g^{(i)} d v_{g^{-1}}^{(i)}= 1$.
Since $1 \in R$ it follows from the grading that $S/R$ is
separable if and only if there is $c=d_e \in C_R(R) = Z(R)$ such that
${\rm tr}_{\gamma}(c) = 1$. 
\end{rem}

\section{Simplicity}\label{simplicity}

In this short section, we show that a result concerning simplicity
for strongly graded rings from \cite[Theorem 6.6]{O09}
can be generalized to epsilon-strongly graded rings
(see Proposition~\ref{Prop:Simplicity}).
Throughout this section, 
$S$ denotes an arbitrary unital ring which is epsilon-strongly graded by $G$ and we put $R = S_e$.
Recall that $R$ is called a {\it maximal
commutative subring} of $S$ if $C_S(R) = R$.

\begin{lem}\label{Lem:NonDeg}
If $I$ is a non-zero ideal of $S$, then $I \cap C_S(Z(R)) \neq \{0\}$.
\end{lem}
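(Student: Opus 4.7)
My plan is a minimal-support argument tailored to the epsilon-strong setting. The key tools I would invoke are: (i) Proposition~\ref{restriction}, which gives $s_g r = \gamma_g(r) s_g$ for every $r \in Z(R)$ and $s_g \in S_g$; (ii) the normalization $n_e = 1$, $u_e^{(1)} = v_e^{(1)} = 1$ in Definition~\ref{definitiongamma}, which forces $\gamma_e = \mathrm{id}_R$; and (iii) the identity $\sum_i u_g^{(i)} v_{g^{-1}}^{(i)} = \epsilon_g$ together with Proposition~\ref{epsilon1}(iii), which lets one left-shift any nonzero element of $S_g$ by a suitable $v_{g^{-1}}^{(i)} \in S_{g^{-1}}$ to produce a nonzero element whose $e$-component is nonzero.

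I would first show that the set $\mathcal{F} := \{z \in I \setminus \{0\} : z_e \neq 0\}$ is nonempty. Given any $0 \neq x \in I$ and any $g_0 \in \Supp(x)$, the relation $\sum_i u_{g_0}^{(i)} v_{g_0^{-1}}^{(i)} x_{g_0} = \epsilon_{g_0} x_{g_0} = x_{g_0} \neq 0$ forces $v_{g_0^{-1}}^{(i_0)} x_{g_0} \neq 0$ for some $i_0$, and then $v_{g_0^{-1}}^{(i_0)} x \in \mathcal{F}$. Next, I would pick $y \in \mathcal{F}$ with $|\Supp(y)|$ minimal and claim that $y \in C_S(Z(R))$. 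For arbitrary $r \in Z(R)$, tools (i) and (ii) give
\[ ry - yr = \sum_{g \in G} (r - \gamma_g(r)) y_g, \]
whose $e$-component is zero, so $\Supp(ry-yr) \subseteq \Supp(y) \setminus \{e\}$. If $ry - yr \neq 0$, choose $g_1 \in \Supp(ry-yr)$ (necessarily $g_1 \neq e$) and apply the same shifting trick at $g_1$ to produce $v_{g_1^{-1}}^{(j_1)} (ry-yr) \in \mathcal{F}$ whose support lies in $g_1^{-1}(\Supp(y) \setminus \{e\})$, a set of cardinality $|\Supp(y)|-1$. This contradicts the minimal choice of $y$, so $ry = yr$; since $r$ was arbitrary, $y \in I \cap C_S(Z(R))$ is the desired nonzero element.

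The subtle point to get right is that minimality must be taken inside $\mathcal{F}$ rather than simply among all nonzero elements of $I$: a naive minimal-support argument on $I$ alone would fail, because the commutator $ry - yr$ might shorten the support but leave us with an element outside the class on which minimality was imposed, so no contradiction arises. The requirement $y_e \neq 0$ is precisely what guarantees that the loss of the $e$-slot in $ry-yr$, followed by a left-shift that re-installs an $e$-slot via (iii), produces a strict decrease from $|\Supp(y)|$ to $|\Supp(y)|-1$. All remaining steps are routine bookkeeping with the $G$-grading.
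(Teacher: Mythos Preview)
Your argument is correct. The paper takes a different, much shorter route: it observes that the epsilon-strong grading forces $S$ to be right non-degenerate (for $0 \neq s \in S_g$ one has $s\epsilon_{g^{-1}} = s$, so $sS_{g^{-1}} \neq \{0\}$) and then simply invokes \cite[Theorem~3]{OL12}, which already contains the ideal-intersection statement for graded rings satisfying this non-degeneracy condition. What you have written is essentially an in-line, self-contained version of that cited theorem, specialized to the epsilon-strong setting and using left rather than right non-degeneracy (your shifting step $v_{g^{-1}}^{(i)} x_g \neq 0$ is exactly the left analogue of the paper's observation). The trade-off is the obvious one: the paper's proof is two lines long but outsources the substantive work to an external reference, whereas your minimal-support argument is longer but makes the lemma independent of \cite{OL12} and transparently exhibits how the identities $\gamma_e = \mathrm{id}$ and $\gamma_g(r)s_g = s_g r$ from Proposition~\ref{restriction} drive the conclusion.
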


\begin{proof}
We claim that $S$ is right non-degenerate in the sense of \cite[Definition 2]{OL12}.
If we assume that the claim holds, then the desired result follows from \cite[Theorem 3]{OL12}.
Now we show the claim.
Take $g\in G$ and any non-zero $s\in S_g$.
Seeking a contradiction,
suppose that $s S_{g^{-1}} = \{ 0 \}$.
Then $sS_{g^{-1}}S_g = \{ 0 \}$. But since $\epsilon_{g^{-1}} \in S_{g^{-1}}S_g$ we get that $s=s\epsilon_{g^{-1}} = 0$, which is a contradiction.
\end{proof}

Recall that an ideal $I$ of $S$ is said to be \emph{graded} 
if $I = \oplus_{g\in G} (I \cap S_g)$ holds.
If $\{0\}$ and $S$ are the only graded ideals of $S$, 
then $S$ is said to be \emph{graded simple}.

\begin{prop}\label{Prop:Simplicity}
If $R$ is a maximal commutative subring of $S$, 
then $S$ is simple if and only if $S$ is graded simple.
\end{prop}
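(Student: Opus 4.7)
The ``only if'' direction is immediate: a graded ideal is in particular an ideal, so if the only ideals of $S$ are $\{0\}$ and $S$, then the only graded ideals are $\{0\}$ and $S$. The content of the proposition is the converse, and my plan is the familiar strategy: given a non-zero ideal $I$ of $S$, produce a non-zero graded ideal contained in $I$, and then invoke graded simplicity to conclude $I = S$.

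The first step is the observation that maximal commutativity forces $R$ itself to be commutative: for $r_1,r_2 \in R$, the relation $r_2 \in R = C_S(R)$ gives $r_1 r_2 = r_2 r_1$. Hence $Z(R) = R$, so $C_S(Z(R)) = C_S(R) = R$. This is the crucial point that allows Lemma~\ref{Lem:NonDeg} to be read simply as $I \cap R \neq \{0\}$, injecting a non-zero element of the principal component into $I$.

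The second step is to pick a non-zero $r \in I \cap R = I \cap S_e$ and consider the two-sided ideal $J := SrS \subseteq I$. Since $r$ is homogeneous of degree $e$, every product $s_1 r s_2$ with $s_1, s_2$ homogeneous is again homogeneous, so $J$ is generated by homogeneous elements and is therefore a graded ideal. It is moreover non-zero, since $r = \epsilon_e \cdot r \cdot \epsilon_e \in J$ (using $\epsilon_e = 1$). Graded simplicity then forces $J = S$, hence $I = S$. I do not foresee any real obstacle in executing this plan; the only mild subtlety is the passage from the centralizer $C_S(Z(R))$ appearing in Lemma~\ref{Lem:NonDeg} to the subring $R$ itself, which is handled by the commutativity observation above.
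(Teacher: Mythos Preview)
Your proof is correct and follows essentially the same route as the paper's: both use Lemma~\ref{Lem:NonDeg} together with the identification $C_S(Z(R))=C_S(R)=R$ (you justify this via the observation $R=C_S(R)\Rightarrow R$ commutative $\Rightarrow Z(R)=R$, which the paper leaves implicit) to find a non-zero homogeneous element in $I$, and then generate a non-zero graded ideal inside $I$. The only cosmetic difference is that the paper uses the full intersection $J=I\cap R$ rather than a single element $r$, but the argument is the same.
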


\begin{proof}
The ``only if'' statement is clear.
Now we show the ``if'' statement.
Let $I$ be a non-zero ideal of $S$.
By the assumption we have $C_S(Z(R))=C_S(R)=R$.
Hence, by Lemma~\ref{Lem:NonDeg} the set $J= I \cap C_S(Z(R))$ is a non-zero ideal of $R$.
The set $SJS$ is a non-zero graded ideal of $S$ and thus, by graded simplicity of $S$, we get that $S=SJS=J$.
This shows that $S$ is a simple ring.
\end{proof}

\section{Partial Crossed Products}\label{sectionpartialcrossedproducts}

In this section, we introduce epsilon-crossed products
(see Definition~\ref{epsiloncrossedproduct}).
We show that the class of epsilon-crossed products
coincides with the class of unital partial crossed products
(see Theorem~\ref{correspondence}).
Thereby, we obtain, in the special case of unital 
partial crossed products, a short proof of
a more  general result by Dokuchaev, Exel and Sim\'{o}n \cite[Theorem 6.1]{dokuchaev2008}
concerning when graded rings can be presented as partial crossed products.
At the end of this section, we use Theorem~\ref{maintheorem}
to reformulate Theorem~\ref{separabilitypartial} so that
it holds for any, possibly infinite, group $G$
(see Theorem~\ref{genseparabilitypartial}).

\begin{defi}
Let $S$ be a ring which is epsilon-strongly graded by $G$.
Take $g \in G$ and $s \in S_g$. Then $s$ is called 
\emph{epsilon-invertible} if there is $t \in S_{g^{-1}}$
such that $st = \epsilon_g$ and $ts = \epsilon_{g^{-1}}$.
We will refer to $t$ as the \emph{epsilon-inverse} of $s$.
\end{defi}

The usage of the term ``the epsilon-inverse'' 
is justified by the next result.

\begin{prop}
Epsilon-inverses are unique.
\end{prop}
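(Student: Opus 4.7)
The plan is to prove uniqueness by the same associativity trick used for inverses in a monoid, with the role of the identity element played by the local idempotents $\epsilon_g$ and $\epsilon_{g^{-1}}$. Specifically, suppose $t, t' \in S_{g^{-1}}$ are both epsilon-inverses of $s \in S_g$, so that
\begin{displaymath}
st = st' = \epsilon_g \qquad \text{and} \qquad ts = t's = \epsilon_{g^{-1}}.
\end{displaymath}
I want to conclude $t = t'$.

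The key ingredient is Proposition~\ref{epsilon1}(iii) applied to the element $g^{-1} \in G$: for every $u \in S_{g^{-1}}$ we have $\epsilon_{g^{-1}} u = u$ and $u \epsilon_{(g^{-1})^{-1}} = u \epsilon_g = u$. That is, $\epsilon_{g^{-1}}$ is a left identity on $S_{g^{-1}}$ and $\epsilon_g$ is a right identity on $S_{g^{-1}}$. Using this, I would compute
\begin{displaymath}
t = \epsilon_{g^{-1}} t = (t's)t = t'(st) = t' \epsilon_g = t',
\end{displaymath}
where the middle equality is ordinary associativity in $S$.

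There is essentially no obstacle here; the only thing to be careful about is invoking the correct instance of Proposition~\ref{epsilon1}(iii) (i.e. with $g^{-1}$ in place of $g$) so that the appropriate idempotents act as identities on elements of $S_{g^{-1}}$ from the correct side.
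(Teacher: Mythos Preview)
Your proof is correct and essentially identical to the paper's: the paper also computes $r = r\epsilon_g = rst = \epsilon_{g^{-1}}t = t$ for two epsilon-inverses $r,t \in S_{g^{-1}}$, relying (implicitly) on the same identity action of $\epsilon_g$ and $\epsilon_{g^{-1}}$ on $S_{g^{-1}}$ from Proposition~\ref{epsilon1}(iii). The only difference is notation.
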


\begin{proof}
Suppose that $g \in G$, $s \in S_g$ and $r,t \in S_{g^{-1}}$
satisfy the equalities
$st = sr = \epsilon_g$ and $ts = rs = \epsilon_{g^{-1}}$.
Then $r = r \epsilon_g = r s t = \epsilon_{g^{-1}} t = t$.
\end{proof}

\begin{defi}\label{epsiloncrossedproduct}
Let $S$ be a ring which is epsilon-strongly graded by $G$.
We say that $S$ is an \emph{epsilon-crossed product by $G$} if for 
each $g \in G$, there is an epsilon-invertible 
element in $S_g$.
\end{defi}

\begin{thm}\label{correspondence}
Let $S$ be a ring which is epsilon-strongly graded by $G$.
Then $S$ is an epsilon-crossed product if and only if 
$S$ is a unital partial crossed product.
\end{thm}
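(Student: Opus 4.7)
The plan is to construct, in the \emph{only if} direction, a unital twisted partial action from a choice of epsilon-invertible elements in each $S_g$, and, in the \emph{if} direction, to extract epsilon-invertible elements from the formal symbols $\delta_g$ of the partial crossed product.

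For the \emph{if} direction, assume $S = R \star_\alpha^w G$. A direct application of (P6) gives $S_g S_{g^{-1}} = D_g$ with identity $1_g$, so $S$ is epsilon-strongly graded with $\epsilon_g = 1_g$. To exhibit an epsilon-invertible element in $S_g$, I would set $u_g = 1_g \delta_g$ and propose the epsilon-inverse $u_g^* = w_{g^{-1},g}^{-1} \delta_{g^{-1}} \in S_{g^{-1}}$, where the inverse is taken in the unital ideal $D_{g^{-1}}$. Applying (P6) gives $u_g u_g^* = 1_g \alpha_g(w_{g^{-1},g}^{-1}) w_{g,g^{-1}}$ and $u_g^* u_g = w_{g^{-1},g}^{-1} \, 1_{g^{-1}} \, w_{g^{-1},g} = 1_{g^{-1}}$. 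The former collapses to $1_g = \epsilon_g$ via the identity $\alpha_g(w_{g^{-1},g}) = w_{g,g^{-1}}$, obtained by specializing (P5) to $(g,h,l) = (g,g^{-1},g)$ with $r = 1_{g^{-1}}$ and invoking (P4).

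For the \emph{only if} direction, suppose $S$ is an epsilon-crossed product and fix epsilon-invertible $u_g \in S_g$ with epsilon-inverse $u_g^* \in S_{g^{-1}}$, taking $u_e = 1$. Put $D_g = S_g S_{g^{-1}}$ with identity $1_g = \epsilon_g$, define $\alpha_g : D_{g^{-1}} \to D_g$ by $\alpha_g(r) = u_g r u_g^*$, and $w_{g,h} = u_g u_h u_{gh}^* \in D_g D_{gh}$. The relations $u_g u_g^* = \epsilon_g$ and $u_g^* u_g = \epsilon_{g^{-1}}$ make $\alpha_g$ a ring isomorphism with inverse $\alpha_{g^{-1}}$. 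Next, introduce the additive isomorphism $\Phi : \bigoplus_{g \in G} D_g \delta_g \to S$ given by $\Phi(r \delta_g) = r u_g$; bijectivity onto $S_g$ comes from $S_g = D_g u_g$, since any $s \in S_g$ equals $(s u_g^*) u_g$ with $s u_g^* \in D_g$, and $d u_g = 0$ for $d \in D_g$ forces $d = d u_g u_g^* = 0$. Multiplicativity of $\Phi$ follows from the two identities $u_g r' = u_g r' u_g^* u_g = \alpha_g(r' 1_{g^{-1}}) u_g$ (for $r' \in D_h$) and $u_g u_h = (u_g u_h u_{gh}^*) u_{gh} = w_{g,h} u_{gh}$, the second using $u_g u_h \in S_{gh}$ together with $u_g u_h \epsilon_{(gh)^{-1}} = u_g u_h$ from Proposition~\ref{epsilon1}(iii). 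Hence $(r u_g)(r' u_h) = r \alpha_g(r' 1_{g^{-1}}) w_{g,h} u_{gh}$, which is (P6). Since $\Phi$ is an additive bijection onto the associative unital ring $S$, axioms (P1), (P3), (P4), (P5) follow either by direct calculation from $u_e = 1$ and associativity in $S$, or equivalently by transport of structure.

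The principal obstacle is axiom (P2), the equality $\alpha_g(D_{g^{-1}} D_h) = D_g D_{gh}$. Since both sides are unital ideals with central idempotent identities, (P2) reduces to the single identity $u_g \epsilon_h u_g^* = \epsilon_g \epsilon_{gh}$, which I propose to establish by a two-sided comparison of central idempotents. Writing $\epsilon_h = \sum_i a_i b_i$ with $a_i \in S_h$, $b_i \in S_{h^{-1}}$ gives $u_g \epsilon_h u_g^* = \sum_i (u_g a_i)(b_i u_g^*) \in S_{gh} S_{(gh)^{-1}} = D_{gh}$; together with the trivial containment in $D_g$, this places $u_g \epsilon_h u_g^*$ in $D_g \cap D_{gh} = D_g D_{gh}$, so $u_g \epsilon_h u_g^* \le \epsilon_g \epsilon_{gh}$ in the idempotent order. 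Symmetrically, decomposing $\epsilon_{gh} = \sum_j c_j d_j$ with $c_j \in S_{gh}$, $d_j \in S_{(gh)^{-1}}$ gives $u_g^* \epsilon_{gh} u_g = \sum_j (u_g^* c_j)(d_j u_g) \in S_h S_{h^{-1}} = D_h$; since this element also lies in $D_{g^{-1}}$, it belongs to $D_{g^{-1}} D_h$, and applying $\alpha_g$ yields $\epsilon_g \epsilon_{gh} = \alpha_g(u_g^* \epsilon_{gh} u_g) \in \alpha_g(D_{g^{-1}} D_h)$, forcing $\epsilon_g \epsilon_{gh} \le u_g \epsilon_h u_g^*$. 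Equality follows, completing (P2) and the theorem.
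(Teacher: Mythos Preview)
Your argument follows the paper's construction closely, and the idempotent-order proof of (P2) is a tidy alternative to the paper's direct construction of preimages. Two points in the \emph{only if} direction need repair.

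The substantive gap is that the definition of a unital twisted partial action requires, in addition to (P1)--(P5), that each $w_{g,h}$ be \emph{invertible} in $D_g D_{gh}$; you assert $w_{g,h}\in D_g D_{gh}$ but never exhibit an inverse, and this does not follow from associativity of $S$ or from the bijection $\Phi$. The patch is short once your identity $u_g\epsilon_h u_g^{*}=\epsilon_g\epsilon_{gh}$ is available: set $v_{g,h}=u_{gh}u_h^{*}u_g^{*}\in D_g D_{gh}$ and compute
\[
w_{g,h}\,v_{g,h}=u_gu_h\,\epsilon_{(gh)^{-1}}\,u_h^{*}u_g^{*}=u_gu_hu_h^{*}u_g^{*}=u_g\epsilon_h u_g^{*}=\epsilon_g\epsilon_{gh}.
\]
For $v_{g,h}\,w_{g,h}$ one needs the companion identity $u_h^{*}\epsilon_{g^{-1}}u_h=\epsilon_{h^{-1}}\epsilon_{(gh)^{-1}}$; this follows by applying the inverse of $\alpha_h$ (namely $r\mapsto u_h^{*}ru_h$) to your (P2) identity with the pair $(h,(gh)^{-1})$ in place of $(g,h)$. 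The paper handles this the same way, constructing the inverse explicitly and using $\alpha_g(\epsilon_{g^{-1}}\epsilon_h)=\epsilon_g\epsilon_{gh}$ in the verification.

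A minor slip: the inverse of $\alpha_g$ is $r\mapsto u_g^{*}ru_g$, not $\alpha_{g^{-1}}$, which by your own definition is $r\mapsto u_{g^{-1}}ru_{g^{-1}}^{*}$. These coincide only if you have arranged $u_{g^{-1}}=u_g^{*}$, which you did not stipulate. Nothing downstream depends on this; you only use that $\alpha_g$ is a ring isomorphism $D_{g^{-1}}\to D_g$, and that is correct.
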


\begin{proof}
First we show the ``only if'' statement.
Suppose that $S$ is an epsilon-crossed product.
We will present $S$ as a unital partial crossed product.
Take $g,h \in G$.
Fix an epsilon-invertible 
element $s_g \in S_g$ with epsilon-inverse $t_{g^{-1}} \in S_{g^{-1}}$.
We may assume that $s_e = t_e = 1$.
Put $D_g = S_g S_{g^{-1}} = R \epsilon_g$, 
$1_g = \epsilon_g$ and $\delta_g = s_g$.
Furthermore, define 
$\alpha_g : D_{g^{-1}} \rightarrow D_g$
by $\alpha_g(r \epsilon_{g^{-1}}) = s_g r t_{g^{-1}}$, for $r \in R$. 
Then $\alpha_g$ is well-defined.
Indeed, if $r,r' \in R$ satisfy 
$r \epsilon_{g^{-1}} = r' \epsilon_{g^{-1}}$, then
$\alpha_g(r \epsilon_{g^{-1}}) = 
s_g r t_{g^{-1}} =
s_g r \epsilon_{g^{-1}} t_{g^{-1}} = 
s_g r' \epsilon_{g^{-1}} t_{g^{-1}} =
s_g r' t_{g^{-1}} =
\alpha_g(r' \epsilon_{g^{-1}})$.
The function $\alpha_g$ is bijective with inverse given by 
$\alpha_g^{-1}(r \epsilon_g) = 
t_{g^{-1}} r s_g$.
Indeed, take $r \in R$. Then
\begin{displaymath}
\alpha_g^{-1} ( \alpha_g (r\epsilon_{g^{-1}}) ) = 
t_{g^{-1}} s_g r t_{g^{-1}} s_g = 
\epsilon_{g^{-1}} r \epsilon_{g^{-1}} = 
r \epsilon_{g^{-1}}
\end{displaymath}
and
\begin{displaymath}
\alpha_g ( \alpha_g^{-1} (r\epsilon_g) ) = 
s_g t_{g^{-1}} r s_g t_{g^{-1}} = 
\epsilon_{g} r \epsilon_{g} = 
r \epsilon_{g}.
\end{displaymath}
The function $\alpha_g$ is clearly additive. Also 
$\alpha_g(\epsilon_{g^{-1}}) = 
s_g \epsilon_{g^{-1}} t_{g^{-1}} =
s_g t_{g^{-1}} = \epsilon_g$.
Now we show that $\alpha_g$ is multiplicative. Take $r,r' \in R$. Then
\begin{displaymath}
\alpha_g(r r' \epsilon_g) = 
s_g r r' t_{g^{-1}} =
s_g r \epsilon_{g^{-1}} r' t_{g¨{-1}} =
s_g r t_{g^{-1}} s_g r' t_{g^{-1}} =
\alpha_g(r \epsilon_{g^{-1}}) \alpha_g(r' \epsilon_{g^{-1}}).
\end{displaymath}
Next put $w_{g,h} = s_g s_h t_{(gh)^{-1}}$.
Since $w_{g,h} \in R$,
$\epsilon_g s_g = s_g$ and 
$t_{(gh)^{-1}} \epsilon_{gh} = t_{(gh)^{-1}}$,
it follows that $w_{g,h} \in D_g D_{gh}$.
Now we show that $w_{g,h}$ is a unit in $D_g D_{gh}$.
To this end, first notice that 
$\alpha_g( \epsilon_{g^{-1}} \epsilon_h ) = \epsilon_g \epsilon_{gh}$.
In fact, from (P2) (see below), we get that 
there is $r \in D_{g^{-1}} D_h $ such that
$\alpha_g (r) = \epsilon_g \epsilon_{gh}$.
Since $\epsilon_g \epsilon_{gh}$ is the identity
of $D_g D_{gh}$, we get that
$\alpha_g( \epsilon_{g^{-1}} \epsilon_h ) = 
\alpha_g( \epsilon_{g^{-1}} \epsilon_h ) \epsilon_g \epsilon_{gh} =
\alpha_g( \epsilon_{g^{-1}} \epsilon_h ) \alpha_g(r) =
\alpha_g( \epsilon_{g^{-1}} \epsilon_h r) =
\alpha_g(r) = \epsilon_g \epsilon_{gh}$.
Put $v_{g,h} = s_{gh} t_{h^{-1}} t_{g^{-1}}\epsilon_g \epsilon_{gh}$.
Then $v_{g,h}\in D_gD_{gh}$
 and 
\begin{align*}
w_{g,h} v_{g,h} &=  s_g s_h t_{(gh)^{-1}} s_{gh} t_{h^{-1}} t_{g^{-1}} =
s_g s_h \epsilon_{(gh)^{-1}} t_{h^{-1}} t_{g^{-1}} =
s_g s_h t_{h^{-1}} t_{g^{-1}}\\
& =  s_g \epsilon_h t_{g^{-1}} = s_g \epsilon_h \epsilon_{g^{-1}} t_{g^{-1}} =
 \alpha_g( \epsilon_h \epsilon_{g^{-1}} )=
 \epsilon_g \epsilon_{gh}\end{align*}
and
\begin{align*}v_{g,h} w_{g,h}& = 
s_{gh} t_{h^{-1}} t_{g^{-1}} s_g s_h t_{(gh)^{-1}} =
s_{gh} t_{h^{-1}} \epsilon_{g^{-1}} s_h t_{(gh)^{-1}} =
s_{gh} t_{h^{-1}} s_h t_{(gh)^{-1}} \\
&= s_{gh} \epsilon_{h^{-1}} t_{(gh)^{-1}} =
s_{gh} \epsilon_{h^{-1}} \epsilon_{(gh)^{-1}} t_{(gh)^{-1}} =  
\alpha_{gh}( \epsilon_{(gh)^{-1}} \epsilon_{h^{-1}} ) 
= \epsilon_{gh} \epsilon_g \epsilon_{gh} \\
&= \epsilon_{gh} \epsilon_g.\end{align*}
Now we check conditions (P1)-(P6) from the introduction.

(P1): Using that $\epsilon_e = 1$, we get that $D_e = R$.
Since $\gamma_e = {\rm id}_R$, we get that $\alpha_e = {\rm id}_R$.

(P2): First notice that 
$\alpha_g( D_{g^{-1}} D_h ) =
s_g D_{g^{-1}} D_h t_{g^{-1}} = 
s_g S_{g^{-1}} S_g S_h S_{h^{-1}} t_{g^{-1}}.$
Since $s_g \in S_g$, and thereby $s_g S_{g^{-1}} \in R$, 
we can conclude that 
\begin{align*}\alpha_g( D_{g^{-1}} D_h ) &=
\epsilon_g s_g S_{g^{-1}} \epsilon_{gh} S_g S_h S_{h^{-1}} t_{g^{-1}} =
\epsilon_g \epsilon_{gh} (s_g S_{g^{-1}} S_g S_h S_{h^{-1}} t_{g^{-1}})\\
& \subseteq D_g D_{gh}R= D_g D_{gh}.\end{align*}
Now we show the reversed inclusion.
Take $r \in R$.
Put $r' = t_{g^{-1}} r \epsilon_g \epsilon_{gh} s_g\in R.$
Then $\epsilon_{g^{-1}} r' = r'$.
Also, since  $\epsilon_{gh} s_g \in S_{gh} S_{(gh)^{-1}} s_g \subseteq S_{gh} S_{h^{-1}}$, it follows that $r' \epsilon_h = r'$.
Thus,
$r' \in D_{g^{-1}} D_h$.
Now,
\begin{displaymath}
\alpha_g(r') = s_g t_{g^{-1}} r \epsilon_g \epsilon_{gh} s_g t_{g^{-1}} =
\epsilon_g r \epsilon_g \epsilon_{gh} \epsilon_g =
r \epsilon_g \epsilon_{gh}.
\end{displaymath}

(P3): Take $r \in D_{h^{-1}} D_{(gh)^{-1}}$. Then 
\begin{displaymath}
\alpha_g( \alpha_h (r) ) w_{g,h} = 
s_g s_h r t_{h^{-1}} t_{g^{-1}} s_g s_h t_{(gh)^{-1}} = 
s_g s_h r t_{h^{-1}} \epsilon_{g^{-1}} s_h t_{(gh)^{-1}},
\end{displaymath}
and the last
expression equals
$$
s_g s_h r t_{h^{-1}} s_h t_{(gh)^{-1}} =
s_g (s_h r )\epsilon_{h^{-1}} t_{(gh)^{-1}} =
s_g s_h r  t_{(gh)^{-1}}  =
s_g s_h r \epsilon_{(gh)^{-1}} t_{(gh)^{-1}}  $$
$$= (s_g s_h  t_{(gh)^{-1}} ) s_{gh}r  t_{(gh)^{-1}} =
w_{g,h} s_{gh} r t_{(gh)^{-1}}  = w_{g,h} \alpha_{gh}(r).
$$

(P4): Using that $s_e = 1$, we get that
$w_{g,e} = s_g s_e t_{g^{-1}} = s_g t_{g^{-1}} = \epsilon_g$ and
$w_{e,g} = s_e s_g t_{g^{-1}} = s_g t_{g^{-1}} = \epsilon_g$.

(P6): Notice first that $S_g = D_g \delta_g$.
In fact, since $s_g \in S_g$ and $D_g \subseteq R$
it follows that $S_g \supseteq D_g s_g$.
On the other hand, take $s_g' \in S_g$.
Then $s_g' = s_g' \epsilon_{g^{-1}} =
s_g' t_{g^{-1}} s_g = 
s_g' t_{g^{-1}} \epsilon_g s_g \in D_g s_g$.
Thus $S_g \subseteq D_g s_g$.
So we get that $S = \oplus_{g \in G} D_g s_g$.
Also, if $s = \sum_{g \in G} r_g s_g$, for $r_g \in D_g$,
then the $r_g$'s are unique. Indeed, suppose that 
$r_g s_g = r_g' s_g$ for some $r_g,r_g' \in D_g$.
Using that $\epsilon_g$ is the multiplicative identity element of $D_g = S_g S_{g^{-1}}$, we get that
$r_g = 
r_g \epsilon_g =
r_g s_g t_{g^{-1}} =
r_g' s_g t_{g^{-1}} =
r_g' \epsilon_g = r_g'$.
Take $r \in D_g$ and $r' \in D_h$. Then
\begin{align*}( r s_g ) ( r' s_h ) &= r s_g \epsilon_{g^{-1}} r' \epsilon_{g^{-1}} s_h =
r s_g \epsilon_{g^{-1}} r' t_{g^{-1}} s_g s_h =
r \alpha_g( \epsilon_{g^{-1}} r' ) s_g s_h  \\
&= r \alpha_g( \epsilon_{g^{-1}} r' ) s_g s_h \epsilon_{(gh)^{-1}} =  
r \alpha_g( \epsilon_{g^{-1}} r' ) s_g s_h 
t_{(gh)^{-1}} s_{gh}\\
& = 
r \alpha_g( \epsilon_{g^{-1}} r' ) w_{g,h} s_{gh}.\end{align*}

(P5): Take $r \in D_{g^{-1}} D_h D_{hl}$. Then
\begin{displaymath}
(s_g r s_h) s_l = (\alpha_g(r) s_g s_h) s_l = 
(\alpha_g(r) w_{g,h} s_{gh} ) s_l = 
\alpha_g(r) w_{g,h} w_{gh,l} s_{ghl}
\end{displaymath}
and
\begin{displaymath}
s_g ( r s_h s_l ) = 
s_g ( r w_{h,l} s_{hl} ) =
\alpha_g ( r w_{h,l} ) w_{g,hl} s_{ghl}.
\end{displaymath}
The claim now follows from the proof of (P6) and associativity.

Now we show the ``if'' statement.
Suppose that $S = \oplus_{g \in G} D_g \delta_g$
is a unital partial crossed product. Take $g \in G$.
Since $S_g S_{g^{-1}} = D_g \delta_e$, we can put 
$\epsilon_g = 1_g \delta_e$.
What remains to show is associativity of $S$.
This has already been shown in a more general context
(see \cite[Theorem 2.4]{dokuchaev2008}).
Here we provide a short direct proof for unital twisted partial actions.
To this end, take $g,h,l \in G$, $a \in D_g$, 
$b \in D_h$ and $c \in D_l$. Then
$$
( a \delta_g b \delta_h ) c \delta_l = 
( a \alpha_g (1_{g^{-1}} b) w_{g,h} \delta_{gh} ) c \delta_l
= a \alpha_g (1_{g^{-1}} b) w_{g,h} \alpha_{gh} (1_{(gh)^{-1}} c )
 w_{gh,l} \delta_{ghl}. 
$$
By (P2), the last expression equals
$$a \alpha_g (1_{g^{-1}} b) w_{g,h} \alpha_{gh} (1_{h^{-1}} 1_{(gh)^{-1}} c )
 w_{gh,l} \delta_{ghl},
$$
which, in turn, by (P3), equals
\begin{align*}
& a \alpha_g (1_{g^{-1}} b) \alpha_g( \alpha_h( 1_{h^{-1}} 1_{(gh)^{-1}} c ) ) w_{g,h}
 w_{gh,l} \delta_{ghl} = \\ 
& a \alpha_g (1_{g^{-1}} b \alpha_h( 1_{h^{-1}} 1_{(gh)^{-1}} c ) ) w_{g,h}
 w_{gh,l} \delta_{ghl} = a \alpha_g (1_{g^{-1}} b \alpha_h( 1_{h^{-1}} c ) ) w_{g,h}
 w_{gh,l} \delta_{ghl}.
\end{align*}
By (P5), this equals
\begin{displaymath}
a \alpha_g( 1_{g^{-1}} b \alpha_h ( 1_{h^{-1}} ) w_{h,l} ) w_{g,hl} \delta_{ghl}=
a \delta_g ( b \alpha_h ( 1_{h^{-1}} c ) w_{h,l} \delta_{hl} ) = 
a \delta_g ( b \delta_h c \delta_l ).
\end{displaymath}
\end{proof}

\begin{defi}
Let $S=R\star_\alpha^w G$ be a unital partial crossed product, and
let $Z(R)_{\alpha,{\rm fin}}$ denote the set of 
$r \in Z(R)$ with the property that 
for all but finitely many $g \in G$, the relation 
$r 1_g = 0$ holds. Define the trace map 
$t_{\alpha} : Z(R)_{\alpha,{\rm fin}} \rightarrow Z(R)$
by $t_{\alpha}(r) = \sum_{g \in G} \alpha_g(r 1_{g^{-1}})$,
for $r \in Z(R)_{\alpha,{\rm fin}}$.
\end{defi}

\begin{thm}\label{genseparabilitypartial}
If $S$ is a unital partial crossed product of a group $G$ over $R$,
then $S/R$ is separable if and only if 
$1 \in {\rm tr}_{\alpha} (Z(R)_{\rm fin})$.
\end{thm}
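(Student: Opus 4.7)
The strategy is to reduce the claim to Theorem~\ref{maintheorem} via the correspondence in Theorem~\ref{correspondence}. First I would invoke Theorem~\ref{correspondence} in order to realize $S = \oplus_{g \in G} D_g \delta_g$ as an epsilon-strongly graded ring in which the multiplicative identity of $S_g S_{g^{-1}} = D_g \delta_e$ is $\epsilon_g = 1_g \delta_e$. Under the canonical identification $R = D_e \delta_e$, the element $\epsilon_g$ corresponds to $1_g$, so the defining condition $r \epsilon_g = 0$ translates into $r 1_g = 0$; this shows at once that $Z(R)_{\rm fin} = Z(R)_{\alpha,{\rm fin}}$.

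Next I would show that, for every $g \in G$ and every $r \in Z(R)$, the map $\gamma_g$ of Definition~\ref{definitiongamma} agrees with the partial action, in the sense that
\begin{displaymath}
\gamma_g(r) = \alpha_g(r 1_{g^{-1}}).
\end{displaymath}
The cleanest way to do this is via the characterization in Proposition~\ref{restriction}: $\gamma_g(r)$ is the unique element of $Z(R) \epsilon_g$ satisfying $\gamma_g(r) s_g = s_g r$ for every $s_g \in S_g$. Taking $s_g = a \delta_g$ with $a \in D_g$ and using rule (P6) together with (P4), one computes
\begin{displaymath}
(a \delta_g)\, r = a\, \alpha_g(1_{g^{-1}} r)\, \delta_g.
\end{displaymath}
Since $r \in Z(R)$ implies $r 1_{g^{-1}} \in Z(D_{g^{-1}})$, and since $\alpha_g$ is a ring isomorphism $D_{g^{-1}} \to D_g$, the element $\alpha_g(r 1_{g^{-1}})$ lies in $Z(D_g)$ and commutes with $a$; hence $\alpha_g(r 1_{g^{-1}})$ satisfies the defining relation and therefore equals $\gamma_g(r)$.

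With this identification in hand, I would conclude that ${\rm tr}_{\gamma}(r) = \sum_{g \in G} \gamma_g(r) = \sum_{g \in G} \alpha_g(r 1_{g^{-1}}) = {\rm tr}_{\alpha}(r)$ for every $r$ in the common domain $Z(R)_{\rm fin} = Z(R)_{\alpha,{\rm fin}}$. The result then follows from a direct invocation of Theorem~\ref{maintheorem}. The only non-routine step is the identification $\gamma_g(r) = \alpha_g(r 1_{g^{-1}})$; once this is established, everything else reduces to bookkeeping against the definitions and an appeal to the previously proved main theorem.
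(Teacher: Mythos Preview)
Your proposal is correct and follows exactly the route the paper intends: the paper's own proof is the single line ``This follows from Theorem~\ref{maintheorem} and Theorem~\ref{correspondence},'' and you have spelled out precisely the identifications (of $\epsilon_g$ with $1_g$, of $Z(R)_{\rm fin}$ with $Z(R)_{\alpha,{\rm fin}}$, and of $\gamma_g(r)$ with $\alpha_g(r1_{g^{-1}})$) that are left implicit in that citation. The one point worth noting is that your appeal to uniqueness in Proposition~\ref{restriction} is not stated there verbatim, but it is an immediate consequence: if $x\in Z(R)\epsilon_g$ satisfies $xS_g=\{0\}$ then $x=x\epsilon_g\in xS_gS_{g^{-1}}=\{0\}$, so the characterizing relation does determine $\gamma_g(r)$ uniquely.
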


\begin{proof}
This follows from Theorem~\ref{maintheorem} and
Theorem~\ref{correspondence}.
\end{proof}

\begin{lem}\label{lemmaproj}
Let $S$ be a unital partial crossed product 
of a group $G$ over $R$ and take $g \in G$.
If $D_g$ is projective as a left (right) 
$R$-module, then $S_g=D_g\delta_g$ is projective as a left (right)
$R$-module.
\end{lem}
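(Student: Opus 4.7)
The plan is to exhibit explicit $R$-linear isomorphisms relating $S_g$ to the corresponding unital ideal of $R$---namely $D_g$ on the left and $D_{g^{-1}}$ on the right---and then transfer projectivity across them.

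For the left case, axioms (P1), (P4) and the multiplication rule (P6) give, for $r \in R = D_e$ and $a \in D_g$,
\[ (r\delta_e)(a\delta_g) = r\,\alpha_e(a \cdot 1_e)\,w_{e,g}\,\delta_g = ra\,\delta_g.\]
Hence the map $\Phi\colon D_g \to S_g$, $a \mapsto a\delta_g$, is an additive bijection and is left $R$-linear by the computation above; it is therefore an isomorphism of left $R$-modules. Since projectivity is preserved by isomorphism, left $R$-projectivity of $D_g$ transfers to $S_g$.

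For the right case, the naive map $a \mapsto a\delta_g$ fails to be right $R$-linear, since $(a\delta_g)(r\delta_e) = a\,\alpha_g(r \cdot 1_{g^{-1}})\,\delta_g$ involves the twisting by $\alpha_g$. Instead I take $\Psi\colon D_{g^{-1}} \to S_g$, $b \mapsto \delta_g \cdot (b\delta_e)$. Using (P4) together with $b \cdot 1_{g^{-1}} = b$ for $b \in D_{g^{-1}}$, rule (P6) yields $\Psi(b) = \alpha_g(b)\,\delta_g$, so $\Psi$ is bijective (as $\alpha_g\colon D_{g^{-1}} \to D_g$ is a ring isomorphism) and right $R$-linear by associativity of multiplication in $S$. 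Hence $S_g \cong D_{g^{-1}}$ as right $R$-modules, and the hypothesis---applied with $g$ replaced by $g^{-1}$, which is permitted as the lemma is stated for arbitrary $g \in G$---gives right $R$-projectivity of $S_g$.

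The main subtlety, and really the only nontrivial content, is noticing that the right-sided statement must be routed through $D_{g^{-1}}$ rather than $D_g$: the twisting by $\alpha_g$ in (P6) breaks the naive right $R$-linear analogue of $\Phi$. Once this is observed, the verification reduces to short direct calculations with (P1), (P4) and (P6), together with the bijectivity of $\alpha_g$.
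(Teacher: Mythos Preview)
Your left case is fine and matches the paper's one-line observation. The right case, however, has a genuine logical gap. You correctly establish a right $R$-module isomorphism $S_g \cong D_{g^{-1}}$, but the lemma's hypothesis (for the fixed $g$ under consideration) is that $D_g$ is right $R$-projective, not $D_{g^{-1}}$. Your appeal to ``the lemma is stated for arbitrary $g$'' does not repair this: the lemma is the family of implications $\forall g\,\bigl(D_g\text{ right-proj.}\Rightarrow S_g\text{ right-proj.}\bigr)$, whereas your isomorphism yields $\forall g\,\bigl(D_{g^{-1}}\text{ right-proj.}\Rightarrow S_g\text{ right-proj.}\bigr)$, equivalently $D_g\text{ right-proj.}\Rightarrow S_{g^{-1}}\text{ right-proj.}$---a different statement. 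Since $\alpha_g\colon D_{g^{-1}}\to D_g$ is a ring isomorphism but not $R$-linear, you cannot transfer right $R$-projectivity of $D_g$ to $D_{g^{-1}}$ along it.

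The paper avoids this by working directly with $D_g$: given a right dual basis $\{d_i,f_i\}$ for $D_g$, it twists the coordinate functionals through $\alpha_g^{-1}$, setting $\overline{f}_i(d\delta_g)=\alpha_g^{-1}(1_g f_i(d))$, and verifies that $\{d_i\delta_g,\overline{f}_i\}$ is a right dual basis for $S_g$. This uses exactly the assumed projectivity of $D_g$. (As an aside: in the paper's setup each $D_g=R\,1_g$ with $1_g$ a central idempotent, so $D_g$ is in fact a direct summand of $R$ and hence always projective on both sides; invoking this would rescue your argument, but that observation is not part of your proof, and without it the argument as written does not establish the stated implication.)
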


\begin{proof}
The  ``left'' part is trivial since the 
left action of $R$ on $S_g$ is defined by 
the left action of $R$ on $D_g$.
Now we show the  ``right'' part.
Suppose that $D_g$ is projective as a right $R$-module.
Let $\{ d_i , f_i \}_{i \in I}$ be a dual basis
for $D_g$ as a right $R$-module.
For each $i \in I$, define $\overline{f}_i : D_g \rightarrow R$
by the relations $\overline{f}_i(d) = \alpha_g^{-1}( 1_g f_i(d) )$,
for $d \in D_g$. Then $\{ d_i \delta_g , \overline{f}_i \}_{i \in I}$
is a dual basis for $S_g = D_g \delta_g$ as a right $R$-module.
In fact, if $d \in D_g$, then
\begin{displaymath}
\sum_{i \in I} (d_i \delta_g) (\overline{f}_i(d) \delta_e) =
\sum_{i \in I} d_i \alpha_g( \overline{f}_i(d) ) w_{g,e} \delta_g =
\sum_{i \in I} d_i 1_g f_i(d) \delta_g = d \delta_g.
\end{displaymath}
\end{proof}

\begin{thm}
Let $S$ be a unital partial crossed product 
of a group $G$ over $R$,
and let $R$ is semisimple (left/right hereditary). 
If $1 \in {\rm tr}_{\gamma}(Z(R)_{\rm fin})$,
then $S$ is semisimple (left/right hereditary).
In particular, if $\epsilon_g = 0$ for all but finitely many $g\in G$ and 
${\rm tr}_{\gamma}(1)$ is invertible in $R$, 
then $S$ is semisimple (left/right hereditary). 
\end{thm}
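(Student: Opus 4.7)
The plan is to deduce this theorem as a direct corollary of Theorem~\ref{theoremhereditary}, by passing from the partial crossed product setting to the epsilon-strongly graded setting via Theorem~\ref{correspondence}, and then verifying the projectivity hypothesis in the hereditary case by means of Lemma~\ref{lemmaproj}.

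First, I would invoke Theorem~\ref{correspondence}: any unital partial crossed product is an epsilon-crossed product, hence in particular is epsilon-strongly graded by $G$. Consequently $S$ falls within the hypotheses of Theorem~\ref{theoremhereditary}. In the semisimple case no projectivity assumption is required, so the hypothesis ``$R$ semisimple and $1 \in {\rm tr}_{\gamma}(Z(R)_{\rm fin})$'' immediately yields that $S$ is semisimple.

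For the hereditary case, it remains only to verify that each $S_g$ is projective as a left (right) $R$-module. Here I would argue as follows: since $R$ is left (right) hereditary, every left (right) ideal of $R$ is projective, as a submodule of the projective module ${}_R R$ (respectively $R_R$). In particular, the two-sided ideal $D_g$ of $R$ is projective as a left (right) $R$-module. By Lemma~\ref{lemmaproj}, this propagates to $S_g = D_g \delta_g$ being projective as a left (right) $R$-module. All hypotheses of Theorem~\ref{theoremhereditary} are then in place, and we conclude that $S$ is left (right) hereditary.

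For the ``in particular'' clause, assume that $\epsilon_g = 0$ for all but finitely many $g \in G$ and that ${\rm tr}_{\gamma}(1)$ is invertible in $R$. The argument in the proof of Corollary~\ref{separablecorollary} shows that ${\rm tr}_{\gamma}(1)^{-1} \in Z(R)_{\rm fin}^{\gamma} \subseteq Z(R)_{\rm fin}$ and that ${\rm tr}_{\gamma}({\rm tr}_{\gamma}(1)^{-1}) = 1$, so $1 \in {\rm tr}_{\gamma}(Z(R)_{\rm fin})$, and the general statement applies. There is no genuine obstacle in this argument; the only slightly non-cosmetic step is the invocation of Lemma~\ref{lemmaproj} in the hereditary case, which is needed because the right $R$-action on $S_g = D_g \delta_g$ involves the twisting cocycle, so the projectivity of $S_g$ as a right $R$-module is not immediately the same as the projectivity of $D_g$ as a right $R$-module.
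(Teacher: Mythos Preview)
Your proposal is correct and follows essentially the same route as the paper: the paper's proof simply cites Theorem~\ref{theoremhereditary} for the semisimple part, and for the hereditary part invokes Theorem~\ref{theoremhereditary} together with Lemma~\ref{lemmaproj} and the observation that each ideal $D_g$ of the hereditary ring $R$ is left/right projective. Your version is merely more explicit (in particular, you spell out the use of Theorem~\ref{correspondence} to place $S$ in the epsilon-strongly graded framework, and you unpack the ``in particular'' clause via Corollary~\ref{separablecorollary}), but the underlying argument is identical.
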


\begin{proof}
The ``semisimple'' part follows from Theorem~\ref{theoremhereditary}.
The ``hereditary'' part follows from Theorem~\ref{theoremhereditary},
Lemma~\ref{lemmaproj} and the fact that all the ideals $D_g$, for $g \in G$,
of the hereditary ring $R$, are left/right projective.
\end{proof}

\section{Examples: A Dade-Like Construction}\label{sectionexample}

In this section, we provide a class of examples of separable 
epsilon-strongly graded rings, neither of which are strongly graded, 
nor partial crossed products, in any natural way.
Our inspiration comes from the first known 
example (due to E. Dade, according to \cite[Example 2.9]{dascalescu1999})
of a strongly graded ring which is not a crossed product.
Namely, suppose that $A$ is a commutative unital ring with
a non-zero multiplicative identity $1_A$. Put
\begin{displaymath}
S =  M_3(A),
\quad
R =  
\left( 
\begin{matrix}
  A & A & 0 \\
  A & A & 0 \\
  0 & 0 & A
 \end{matrix}
\right)
\,\, \text{  and  } \,\,
T =  
\left( 
\begin{matrix}
  0 & 0 & A \\
  0 & 0 & A \\
  A & A & 0
 \end{matrix}
\right).
\end{displaymath}
Then $S$ is strongly ${\Bbb Z}_2$-graded with $S_0 = R$ and $S_1 = T$, 
but $S$ is not a crossed product of ${\Bbb Z}_2$ over $R$ since $T$ 
does not contain any element which is invertible in $S$.
Our idea is to postulate another unital commutative ring
$B$ with a non-zero multiplicative identity $1_B$
such that $B$ is an ideal of $A$ with $B\subsetneq A$.
Now we modify Dade's example by putting
\begin{displaymath}
S =  
\left( 
\begin{matrix}
  A & A & B \\
  A & A & B \\
  B & B & A
 \end{matrix}
\right),
\quad
R =  
\left( 
\begin{matrix}
  A & A & 0 \\
  A & A & 0 \\
  0 & 0 & A
 \end{matrix}
\right)
\,\, \text{  and  } \,\,
T =  
\left( 
\begin{matrix}
  0 & 0 & B \\
  0 & 0 & B \\
  B & B & 0
 \end{matrix}
\right).
\end{displaymath}

\begin{prop}\label{propexample}
The ring $S$ is epsilon-strongly $\mathbb{Z}_2$-graded with 
$S_0 = R$ and $S_1 = T$. With this grading,
$S$ is neither strongly graded, nor
a partial crossed product.
Moreover, the ring extension $S/R$ is separable.
\end{prop}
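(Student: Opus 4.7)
The plan is to verify the four assertions in order. For the grading, direct block matrix multiplication shows $R \cdot R \subseteq R$, $R \cdot T \subseteq T$, $T \cdot R \subseteq T$ and $T \cdot T \subseteq R$, so $S = R \oplus T$ is indeed $\mathbb{Z}_2$-graded. A short entry-by-entry computation of $T \cdot T$, using $B \cdot B = B$ (since $B$ has identity $1_B$), identifies $T \cdot T$ as the $R$-ideal of matrices having the shape of $R$ but with entries in $B$, and its multiplicative identity is the central element $\epsilon_1 = 1_B \cdot I_3 \in Z(R)$. Setting $\epsilon_0 = I_3$ and fixing the explicit decomposition $\epsilon_1 = (1_B E_{13})(1_B E_{31}) + (1_B E_{23})(1_B E_{32}) + (1_B E_{31})(1_B E_{13})$ with factors in $T$, criterion (iii) of Proposition~\ref{epsilon1} is immediate: any $t \in T$ has entries in $B$, on which $1_B$ acts as the identity, so $\epsilon_1 t = t = t \epsilon_1$. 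Hence $S$ is epsilon-strongly $\mathbb{Z}_2$-graded.

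For the two negative claims, first note that $1_B$ is a central idempotent of $A$ with $1_B \neq 1_A$, since $1_B = 1_A$ would force $B = 1_A \cdot A = A$, contradicting $B \subsetneq A$. Consequently $\epsilon_1 \neq 1_S$ and $S$ is not strongly graded. To show that $S$ is not a unital partial crossed product, Theorem~\ref{correspondence} allows me to instead rule out epsilon-invertible elements in $T$. Writing a hypothetical pair $t, t' \in T$ with entries $b_1, b_2, b_3, b_4$ and $c_1, c_2, c_3, c_4 \in B$ at positions $(1,3), (2,3), (3,1), (3,2)$, the equation $tt' = \epsilon_1$ forces in particular $b_1 c_3 = 1_B$, $b_1 c_4 = 0$ and $b_2 c_4 = 1_B$. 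Multiplying the middle equation by $c_3$ in the commutative ring $A$ gives $1_B c_4 = 0$, hence $c_4 = 0$, contradicting $b_2 c_4 = 1_B \neq 0$.

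For separability I apply Theorem~\ref{maintheorem}. Since $\mathbb{Z}_2$ is finite, $Z(R)_{\rm fin} = Z(R)$, which is easily seen to be $\{\mathrm{diag}(a, a, a') : a, a' \in A\}$. Using the $u_i, v_i$ appearing in the decomposition of $\epsilon_1$ above, a direct matrix computation yields $\gamma_1(\mathrm{diag}(a, a, a')) = \mathrm{diag}(1_B a', 1_B a', 1_B a)$, whence ${\rm tr}_{\gamma}(\mathrm{diag}(a, a, a')) = \mathrm{diag}(a + 1_B a', a + 1_B a', a' + 1_B a)$. Taking $c = \mathrm{diag}(1_A - 1_B, 1_A - 1_B, 1_A) \in Z(R)$ and using $1_B^2 = 1_B$ produces ${\rm tr}_{\gamma}(c) = I_3 = 1_S$, and Theorem~\ref{maintheorem} concludes separability. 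The main technical nuisance is the matrix bookkeeping for $\gamma_1$; I note that the most natural candidate $c = 1$ only gives ${\rm tr}_{\gamma}(1) = (1_A + 1_B) I_3$, which need not be invertible in $R$, so Corollary~\ref{separablecorollary} is not available and the slightly twisted choice of $c$ above is essential.
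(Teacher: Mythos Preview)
Your proof is correct and, in large part, follows the same template as the paper's: the same explicit $\epsilon_0,\epsilon_1$, the same decomposition of $\epsilon_1$ into products of matrix units, the same computation of $\gamma_1$ and ${\rm tr}_\gamma$ on $Z(R)=\{\mathrm{diag}(a,a,a')\}$, and a solution of the system $a+1_Ba'=1_A$, $a'+1_Ba=1_A$ (you take $(a,a')=(1_A-1_B,1_A)$, the paper takes the symmetric solution $(1_A,1_A-1_B)$). Your concluding remark about ${\rm tr}_\gamma(1)=(1_A+1_B)I_3$ also anticipates exactly the content of Proposition~\ref{specialcase}.

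The one genuine divergence is the argument that $S$ is not a unital partial crossed product. The paper argues directly from the axioms: it assumes $T=D\delta_1$ for a non-zero unital ideal $D\subseteq R$, writes out a putative $w_{1,1}^{-1}$, expands the identity $(w_{1,1}^{-1}\delta_1)(1_D\delta_1)=1_D\delta_0$ coming from (P6), and obtains a contradiction by comparing the determinant (which vanishes on the left-hand side) with $1_I\neq 0$ on the right. You instead invoke Theorem~\ref{correspondence} to reduce the question to the non-existence of an epsilon-invertible element in $T$, and then dispatch that with a three-line commutative-algebra argument (from $b_1c_3=1_B$ and $b_1c_4=0$ one gets $c_4=1_Bc_4=c_3b_1c_4=0$, contradicting $b_2c_4=1_B\neq 0$). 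Your route is shorter and more conceptual, and nicely illustrates the utility of Theorem~\ref{correspondence}; the paper's route has the virtue of being self-contained, not relying on the equivalence between epsilon-crossed products and unital partial crossed products.
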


\begin{proof}
If we put 
\begin{displaymath}
\epsilon_0 = 
\left( 
\begin{matrix}
  1_A & 0   & 0 \\
  0   & 1_A & 0 \\
  0   & 0 & 1_A
 \end{matrix}
\right)
\quad
{\rm and}
\quad
\epsilon_1 = 
\left( 
\begin{matrix}
  1_B & 0   & 0 \\
  0   & 1_B & 0 \\
  0   & 0 & 1_B
 \end{matrix}
\right),
\end{displaymath}
then it is clear that $RR = R \epsilon_0$ and
$TT = \epsilon_1 R$.
Hence $S$ is epsilon-strongly $\mathbb{Z}_2$-graded.
The equality $TT = \epsilon_1 R$ also shows that 
$S$ is not strongly graded, since $1_A \notin B$.
Seeking a contradiction, suppose that this grading presents
$S$ as a partial crossed product of $\mathbb{Z}_2$ over $R$.
Then there is $\delta_1 \in T$ and a non-zero unital ideal $D$ of $R$
such that $T = D \delta_1$.
Take unital ideals $I$ and $J$ of $A$, of which at least one is non-zero, such that
\begin{displaymath}
D =  
\left( 
\begin{matrix}
  I & I & 0 \\
  I & I & 0 \\
  0 & 0 & J
 \end{matrix}
\right),
\end{displaymath}
take $b_1,b_2,b_3,b_4 \in B$ such that
\begin{displaymath}
\delta_1 = 
\left( 
\begin{matrix}
  0   & 0   & b_3 \\
  0   & 0   & b_4 \\
  b_1 & b_2 & 0
 \end{matrix}
\right)
\end{displaymath}
and take $w_{11},w_{12},w_{21},w_{22} \in I$ and $w_{33} \in J$ such that
\begin{displaymath}
w_{1,1}^{-1} =  
\left( 
\begin{matrix}
  w_{11} & w_{12} & 0 \\
  w_{21} & w_{22} & 0 \\
  0 & 0 & w_{33}
 \end{matrix}
\right).
\end{displaymath}
From (P6) it follows that 
\begin{equation}\label{wdelta}
(w_{1,1}^{-1} \delta_1) (1_D \delta_1) = 1_D \delta_0.
\end{equation}
By a straightforward calculation, \eqref{wdelta} can be rewritten as
{\scriptsize
\begin{equation}\label{complicated}
\left( 
\begin{matrix}
  1_J b_1 ( w_{11} b_3 + w_{12} b_4 ) & 1_J b_2 ( w_{11} b_3 + w_{12} b_4 ) & 0 \\
  1_J b_1 ( w_{21} b_3 + w_{22} b_4 ) & 1_J b_2 ( w_{21} b_3 + w_{22} b_4 ) & 0 \\
  0 & 0 & 1_I w_{33} ( b_1 b_3 + b_2 b_4 )
 \end{matrix}
\right)
=
\left( 
\begin{matrix}
  1_I & 0   & 0 \\
  0   & 1_I & 0 \\
  0   & 0   & 1_J
 \end{matrix}
\right).
\end{equation}}

\noindent From \eqref{complicated} it follows in particular that
$$1_J b_1 ( w_{11} b_3 + w_{12} b_4 ) = 1_I \quad {\rm and} \quad
1_I w_{33} ( b_1 b_3 + b_2 b_4 ) = 1_J.$$
Thus, $1_I \in J$ and $1_J \in I$.
Hence $I = J$ and so we get that $1_I = 1_J \neq 0$.
By a straightforward calculation the determinant of the
left hand side of \eqref{complicated} is zero.
This contradicts the fact that the determinant of the right
hand side of \eqref{complicated} equals $1_I \neq 0$.

Now we show that $S/R$ is separable.
First of all, it is easy to show that
\begin{displaymath}
Z(R) = \left\lbrace
\left( 
\begin{matrix}
  a & 0 & 0 \\
  0 & a & 0 \\
  0 & 0 & a'
 \end{matrix}
\right)
\
\Big\lvert \ a,a' \in A
\right\rbrace.
\end{displaymath}
We know that $\gamma_0 : Z(R) \rightarrow Z(R)$ is the identity map on $Z(R)$.
Now we determine $\gamma_1 : Z(R) \rightarrow Z(R)$.
To this end, let $e_{ij}$ denote the 3$\times$3 matrix over $A$
with $1_A$ in the $ij$th position, and zeros elsewhere.
Since
\begin{displaymath}
1_B e_{13} 1_B e_{31} + 1_B e_{23} 1_B e_{32} + 1_B e_{31} 1_B e_{13} = \epsilon_1
\end{displaymath}
the map $\gamma_1 : Z(R) \rightarrow Z(R)$ is defined by
\begin{displaymath}
Z(R) \ni r \mapsto 1_B e_{13} r 1_B e_{31} + 1_B  e_{23} r 1_B e_{32} 
+ 1_B e_{31} r 1_B e_{13}.
\end{displaymath}
Thus, the trace map ${\rm tr}_{\gamma} : Z(R) \rightarrow Z(R)$ is defined by
\begin{displaymath}
Z(R) \ni r \mapsto r + 1_B e_{13} r 1_B e_{31} + 1_B  e_{23} r 1_B e_{32} 
+ 1_B e_{31} r 1_B e_{13}.
\end{displaymath}
By Theorem~\ref{maintheorem}, we can deduce that $S/R$ is separable
if we can find
\begin{displaymath}
r = 
\left(
\begin{matrix}
  a & 0 & 0 \\
  0 & a & 0 \\
  0 & 0 & a'
 \end{matrix}
 \right)
\in Z(R)
\end{displaymath}
such that 
\begin{displaymath}
{\rm tr}_{\gamma}(r) = 
\left(
\begin{matrix}
  1_A & 0 & 0 \\
  0 & 1_A & 0 \\
  0 & 0 & 1_A
 \end{matrix}
\right).
\end{displaymath}
By a straightforward calculation, the last relation is equivalent
to the set of equations
$a + 1_B a' = 1_A$ and $a' + 1_B a = 1_A.$
It is easy to see that this set of equations is satisfied 
if we e.g. put $a = 1_A$ and $a' = 1_A - 1_B$.
Therefore, $S/R$ is separable.
\end{proof}

It is easy to give concrete examples of rings
$A$ and $B$ which fit into the above construction.
In fact, from now on in this section, suppose that $\mathbb{ F}$ is a field, 
$A = \mathbb{ F} \times \mathbb{ F}$ and $B =\mathbb{ F} \times \{ 0 \}$.
In that case, the sufficient condition
for separability in Corollary~\ref{separablecorollary}
is not necessary.

\begin{prop}\label{specialcase}
With the above notation, ${\rm tr}_{\gamma}(1_R)$
is invertible in $R$ if and only if ${\rm char}(\mathbb{ F}) \neq 2$.
\end{prop}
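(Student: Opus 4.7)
The plan is to compute $\operatorname{tr}_\gamma(1_R)$ explicitly using the formula derived in the proof of Proposition~\ref{propexample}, and then reduce invertibility in $R$ to invertibility of a single element of $A$, which in turn amounts to a condition on the characteristic.

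First I would apply the trace formula
\begin{displaymath}
\operatorname{tr}_\gamma(r) = r + 1_B e_{13} r 1_B e_{31} + 1_B e_{23} r 1_B e_{32} + 1_B e_{31} r 1_B e_{13}
\end{displaymath}
to $r = 1_R = \epsilon_0 = e_{11}+e_{22}+e_{33}$. Using that $1_R$ is central in $S$ (so it can be pulled out), the standard relations $e_{ij}e_{kl} = \delta_{jk} e_{il}$, and $1_B \cdot 1_B = 1_B$, the sum collapses to
\begin{displaymath}
\gamma_1(1_R) = 1_B(e_{11} + e_{22} + e_{33}) = \epsilon_1.
\end{displaymath}
Hence $\operatorname{tr}_\gamma(1_R) = \epsilon_0 + \epsilon_1 = (1_A + 1_B)\,\epsilon_0$, a scalar multiple of the identity of $R$.

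Next I would observe that $R$ decomposes as a ring as $M_2(A) \times A$ (the two blocks are orthogonal idempotents), so the scalar element $(1_A + 1_B)\,\epsilon_0$ is a unit in $R$ if and only if $1_A + 1_B$ is a unit in $A$. Since $A = \mathbb{F} \times \mathbb{F}$ with $1_A = (1,1)$ and $1_B = (1,0)$, we have $1_A + 1_B = (2,1)$, which is invertible in $A$ precisely when $2 \in \mathbb{F}^\times$, i.e.\ when $\operatorname{char}(\mathbb{F}) \neq 2$. This proves both directions.

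There is no real obstacle here; the only points requiring care are keeping track of the multiplication $1_B^2 = 1_B$ in $B \subseteq A$ (which makes $\gamma_1(1_R)$ collapse to $\epsilon_1$ rather than something more complicated), and verifying that ``invertible in $R$'' really does reduce componentwise through the decomposition $R \cong M_2(A) \times A$. Both are routine once set up.
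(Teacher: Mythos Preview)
Your proof is correct and follows essentially the same line as the paper's: both compute $\operatorname{tr}_\gamma(1_R)$ as the diagonal matrix with entries $1_A+1_B=(2,1)$ using the trace formula from Proposition~\ref{propexample}, and then observe that this is invertible precisely when $2$ is a unit in $\mathbb{F}$. Your additional remark that the reduction to invertibility of $1_A+1_B$ in $A$ goes through the ring decomposition $R\cong M_2(A)\times A$ is a small extra justification the paper leaves implicit.
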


\begin{proof}
Since 
\begin{displaymath}
1_R = 
\left(
\begin{matrix}
  1_A & 0 & 0 \\
  0 & 1_A & 0 \\
  0 & 0 & 1_A
 \end{matrix}
 \right),
\end{displaymath}
we get, from the proof of Proposition~\ref{propexample},
that 
\begin{displaymath}
{\rm tr}_{\gamma} (1_R) = 
\left(
\begin{matrix}
  1_A + 1_B & 0         & 0 \\
  0         & 1_A + 1_B & 0 \\
  0         & 0         & 1_A + 1_B
 \end{matrix}
 \right).
\end{displaymath}
Since $1_A + 1_B = (1,1) + (1,0) = (2,1)$,
we get that ${\rm tr}_{\gamma}(1_R)$
is invertible in $R$ if and only if ${\rm char}(\mathbb{F}) \neq 2$.
\end{proof}
\begin{rem}\label{rem:Bruynbook} In \cite[Remark II.5.1.6]{Bruyn1988} the authors write that 
''If $S$ is an arbitrary graded ring by a finite group $G$
we do not know whether separability of $S$ over $S_e$ implies that
$S$ is strongly graded. This seems very likely however.''
Proposition~\ref{propexample} is a counterexample to this assumption.
Moreover, it is the first known example for which all the homogeneous components of the grading are non-zero.
Using the same method as in \cite[Remark 3.2]{bag}, one may construct a counterexample with a trivial grading, i.e. with $S=S_e$.
\end{rem}

\section{Examples: Morita rings}\label{Sec:MoritaRing}

Let $(A,B, _AM_B, _BN_A, \varphi, \phi)$ be a strict Morita context.
It consists of unital rings $A$ and $B$,
an $A-B$-bimodule $M$, an $B-A$-bimodule $N$,
an $A-A$-bimodule epimorphism $\varphi : M \otimes_B N \to A$
and an $B-B$-bimodule epimorphism $\phi : N \otimes_A M \to B$.

The associated \emph{Morita ring} 
is the set
\begin{displaymath}
	S =
	\left(
	\begin{array}{cc}
		A & M \\
		N & B
	\end{array}
	\right)
\end{displaymath}
equipped with the natural addition and with a multiplication defined by
\begin{displaymath}
		\left(
	\begin{array}{cc}
		a_1 & m_1 \\
		n_1 & b_1
	\end{array}
	\right)
	*
		\left(
	\begin{array}{cc}
		a_2 & m_2 \\
		n_2 & b_2
	\end{array}
	\right)
	=
		\left(
	\begin{array}{cc}
		a_1a_2 + \varphi(m_1 \otimes n_2) & a_1m_2 + m_1 b_2\\
		n_1a_2 + b_1n_2 & \phi(n_1 \otimes m_2) + b_1b_2
	\end{array}
	\right)
\end{displaymath}
for $a_1,a_2\in A$, $b_1,b_2\in B$, $m_1,m_2\in M$ and $n_1,n_2\in N$.
Let $G$ be an infinite cyclic group, generated by $g$.
We can define a $G$-grading on $S$ by putting
\begin{displaymath}
	R=S_e =
	\left(
	\begin{array}{cc}
		A & 0 \\
		0 & B
	\end{array}
	\right),
	\quad
		S_g =
	\left(
	\begin{array}{cc}
		0 & M \\
		0 & 0
	\end{array}
	\right),
\quad	
		S_{g^{-1}} =
	\left(
	\begin{array}{cc}
		0 & 0 \\
		N & 0
	\end{array}
	\right)
\end{displaymath}
and $S_h=\left\{\left(\begin{smallmatrix}0&0\\0&0\end{smallmatrix}\right)\right\}$
for every $h \in G \setminus \{e,g,g^{-1}\}$.
It is easy to see that
\begin{displaymath}
	S_g S_{g^{-1}}
	=
		\left(
	\begin{array}{cc}
		\image(\varphi) & 0 \\
		0 & 0
	\end{array}
	\right) =\left(
	\begin{array}{cc}
		A & 0 \\
		0 & 0
	\end{array}
	\right)
\end{displaymath}
and
\begin{displaymath}
S_{g^{-1}} S_g 
	=
		\left(
	\begin{array}{cc}
		0 & 0 \\
		0 & \image(\phi) 
	\end{array}
	\right)	=\left(
	\begin{array}{cc}
		0  & 0 \\
		0 & B
	\end{array}
	\right)
\end{displaymath}
and thus $S$ is obviously not strongly graded.
However, $S$ is epsilon-strongly graded.
Indeed, if we put
\begin{displaymath}
	\epsilon_g
	=
		\left(
	\begin{array}{cc}
		1_A & 0 \\
		0 & 0
	\end{array}
	\right),
	\quad
\epsilon_{g^{-1}} 
	=
		\left(
	\begin{array}{cc}
		0 & 0 \\
		0 & 1_B
	\end{array}
	\right)	
	\text{ and }
	\epsilon_e = \epsilon_g + \epsilon_{g^{-1}}
\end{displaymath}
then it is easy to verify that this yields an epsilon-strong $G$-grading on $S$.
From the fact that
$\Supp(S)=\{g\in G \mid S_g \neq \{0\} \}$
is finite, we immediately see that
\begin{displaymath}
	Z(R)_{\rm fin} = Z(R) =
	\left(
	\begin{array}{cc}
		Z(A) & 0 \\
		0 &	Z(B)
	\end{array}
	\right).
\end{displaymath}
\begin{rem}
With this grading one can find examples in which the Morita ring
$S$ is not
a partial crossed product of $G$ over $R=S_e$. Indeed, let $P$ be a progenerator in the category ${\rm mod}-R,$ of right $R$-modules. It follows by \cite[Theorem 3.20]{Jacobson1989}  that $({\rm End} P_R, R, P, P^*={\rm hom}(P_R,R), \varphi, \phi),$ where $ \varphi\colon P^*\otimes_{{\rm End} P_R} P\ni f\otimes p\to f(p)\in R$ and $ \varphi\colon P\otimes_{R} P^*\ni p\otimes f\to f_p\in {\rm End} P_R,$ and $f_p(r)=pf(r),$ for all $r\in R,$ is a strict Morita context. Consider the associated Morita ring $S,$ and put $\mathcal{D}_g= S_g S_{g^{-1}}.$ Since, in general, as left ${\rm End} P_R$-modules, ${\rm End} P_R$ is not isomorphic to $P$, it follows by \cite[Theorem 6.5]{dokuchaev2008} that $S$ is not a partial crossed product of $G$ over $S_e.$

A concrete example is obtained by taking a commutative unital ring $R$ and $P=R^n,$ for  some $n>1.$\end{rem}

\begin{prop}\label{Prop:MoritaRingSeparability}
Let $(A,B, _AM_B, _BN_A, \varphi, \phi)$ be a strict Morita context, and let $S$ be the associated Morita ring. Then the extension $S/R$ is separable.
\end{prop}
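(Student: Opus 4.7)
The plan is to invoke Theorem~\ref{maintheorem}, which reduces the problem to exhibiting some $c \in Z(R)_{\rm fin}$ with ${\rm tr}_{\gamma}(c) = 1_R$. Since the paper has already observed that $\Supp(S) = \{e,g,g^{-1}\}$ is finite and that $Z(R)_{\rm fin} = Z(R)$, the trace collapses to the three-term sum
\[
{\rm tr}_{\gamma}(r) \;=\; r + \gamma_g(r) + \gamma_{g^{-1}}(r), \qquad r \in Z(R).
\]

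The first impulse would be to try $c = 1_R = \epsilon_g + \epsilon_{g^{-1}}$, but then $\gamma_g(1_R) = \epsilon_g$ and $\gamma_{g^{-1}}(1_R) = \epsilon_{g^{-1}}$, giving ${\rm tr}_{\gamma}(1_R) = 2\cdot 1_R$, which is not a unit when $\mathrm{char}(R) = 2$. The correct choice is the non-central idempotent $c = \epsilon_g \in Z(R)$, and my approach would be to compute the three pieces of ${\rm tr}_{\gamma}(\epsilon_g)$ in turn.

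The first piece is trivially $\gamma_e(\epsilon_g) = \epsilon_g$. For the middle piece, the crucial observation is that the two matrix idempotents are orthogonal, $\epsilon_g \epsilon_{g^{-1}} = 0$; combined with $\epsilon_{g^{-1}} v_{g^{-1}}^{(i)} = v_{g^{-1}}^{(i)}$ from Proposition~\ref{epsilon1}(iii), every summand in $\gamma_g(\epsilon_g) = \sum_i u_g^{(i)} \epsilon_g v_{g^{-1}}^{(i)} = \sum_i u_g^{(i)} (\epsilon_g \epsilon_{g^{-1}}) v_{g^{-1}}^{(i)}$ vanishes, so $\gamma_g(\epsilon_g) = 0$. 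For the third piece, Proposition~\ref{restriction} supplies a \emph{ring} isomorphism $\gamma_{g^{-1}} : Z(R)\epsilon_g \to Z(R)\epsilon_{g^{-1}}$, which must send the multiplicative identity $\epsilon_g$ of the source to the multiplicative identity $\epsilon_{g^{-1}}$ of the target; hence $\gamma_{g^{-1}}(\epsilon_g) = \epsilon_{g^{-1}}$.

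Summing, ${\rm tr}_{\gamma}(\epsilon_g) = \epsilon_g + 0 + \epsilon_{g^{-1}} = \epsilon_e = 1_R$, and separability of $S/R$ is concluded from Theorem~\ref{maintheorem}. The only subtle point is the choice of $c$: the naive $c = 1_R$ fails in characteristic $2$, so one has to exploit the splitting $1_R = \epsilon_g + \epsilon_{g^{-1}}$ and pick one of the two orthogonal idempotents to avoid the ``doubling'' contribution along the diagonal.
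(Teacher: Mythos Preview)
Your proof is correct and, at bottom, uses the same witness as the paper: both take $c=\epsilon_g=1_Ae_{11}$ and verify ${\rm tr}_\gamma(\epsilon_g)=1_R$ via Theorem~\ref{maintheorem}. The difference lies only in how the three summands are computed. The paper works concretely with the Morita data: it fixes $\sum_i m_i\otimes n_i$ with $\varphi(\sum_i m_i\otimes n_i)=1_A$ (and similarly for $\phi$), writes out $\gamma_g(ae_{11}+be_{22})=\sum_i\varphi(m_ib\otimes n_i)e_{11}$ and $\gamma_{g^{-1}}(ae_{11}+be_{22})=\sum_j\phi(n_ja\otimes m_j)e_{22}$, and then evaluates at $1_Ae_{11}$. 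You instead argue abstractly from the epsilon-structure: $\gamma_g(\epsilon_g)=0$ follows from $\epsilon_g\epsilon_{g^{-1}}=0$ together with $\epsilon_{g^{-1}}v_{g^{-1}}^{(i)}=v_{g^{-1}}^{(i)}$, and $\gamma_{g^{-1}}(\epsilon_g)=\epsilon_{g^{-1}}$ from Proposition~\ref{restriction}. Your route is a little slicker and shows more: it applies verbatim to \emph{any} epsilon-strongly $G$-graded ring with $\Supp(S)\subseteq\{e,g,g^{-1}\}$ and orthogonal idempotents $\epsilon_g,\epsilon_{g^{-1}}$ summing to $1$. One small remark: for $\gamma_{g^{-1}}(\epsilon_g)=\epsilon_{g^{-1}}$ you don't even need the isomorphism statement of Proposition~\ref{restriction}; since $v_g^{(i)}\in S_g$ gives $\epsilon_gv_g^{(i)}=v_g^{(i)}$, one has directly $\gamma_{g^{-1}}(\epsilon_g)=\sum_iu_{g^{-1}}^{(i)}\epsilon_gv_g^{(i)}=\sum_iu_{g^{-1}}^{(i)}v_g^{(i)}=\epsilon_{g^{-1}}$.
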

\begin{proof} We know that $\gamma_0 : Z(R) \rightarrow Z(R)$ is the identity map on $Z(R)$.
Now we determine $\gamma_g$ and $\gamma_{g^{-1}}.$ Let $\sum_i m_i\otimes n_i\in M\otimes_B N$ be such that $\sum_i\varphi( m_i\otimes n_i)=1_A$ and  $\sum_j n_j\otimes m_j\in N\otimes_A M$ with $\sum_j\phi( n_j\otimes m_j)=1_B.$ Then for $ae_{11}+be_{22}\in Z(R)$ one has that
$$
\gamma_g(ae_{11}+be_{22})=\sum_i\varphi( m_ib\otimes n_i)e_{11}
$$
and
$$\gamma_{g^{-1}}(ae_{11}+be_{22})=\sum_j\phi( n_ja\otimes m_j)e_{22}.
$$
Then the trace map ${\rm tr}_{\gamma} : Z(R) \rightarrow Z(R)$ is given by
\begin{displaymath}
ae_{11}+be_{22}\mapsto  ae_{11}+be_{22} + \sum_i\varphi( m_ib\otimes n_i)e_{11}+ \sum_j\phi( n_ja\otimes m_j)e_{22}.
\end{displaymath}
From this it follows that ${\rm tr}(1_A e_{11})=1_A e_{11}+ 1_B e_{22},$ and hence $S/R$ is separable due to Theorem~\ref{maintheorem}.
\end{proof}

\begin{exa}\label{ex:epsfromstrong}
Let $T=\oplus_{g\in G} T_g$ be a ring which is strongly graded by a group $G$.
Fix $g\in G$ and consider the strict Morita context $(T_e,T_e,T_g,T_{g^{-1}},\varphi,\phi)$
where $\varphi : T_g \otimes_{T_e} T_{g^{-1}} \to T_e$ and $\phi : T_{g^{-1}} \otimes_{T_e} T_g \to T_e$
are the canonical $T_e$-bimodule isomorphisms (see \cite[Corollary 3.1.2]{nas04} and \cite{daa}).
The corresponding 
Morita ring 
$S =\left(\begin{smallmatrix}
		T_e & T_g \\
		T_{g^{-1}} & T_e
\end{smallmatrix}\right)$
is epsilon-strongly graded by an infinite cyclic group $G$,
generated by $g$, as described above.
By Proposition~\ref{Prop:MoritaRingSeparability}, $S$ 
is separable over
$R=\left(\begin{smallmatrix}T_e&0\\0&T_e\end{smallmatrix}\right)$.
\end{exa}

\begin{rem}
If $S$ is a ring which is strongly graded by $G$,
then $G=\Supp(S)=\{g\in G \mid S_g \neq \{0\} \}$ necessarily holds.
However, if $S$ is only epsilon-strongly graded by $G$,
then $\Supp(S)$ need not even be a subgroup of $G$.
Indeed, consider Example~\ref{ex:epsfromstrong} and notice that $g$ belongs to $\Supp(S)$
but that
$S_{g^2}=\left\{\left(\begin{smallmatrix}0&0\\0&0\end{smallmatrix}\right)\right\}$.
Hence, in this case $\Supp(S)$ is not closed under group multiplication.
\end{rem}

\section*{acknowledgement}
The authors are grateful to Ruy Exel for having pointed out the equivalence between (ii) and (iii) in Proposition~\ref{epsilon1}.

\end{document}